\author{Tuomas Orponen}
\title{Large cliques in extremal incidence configurations}
\address{Department of Mathematics and Statistics\\ University of Jyv\"askyl\"a,
	P.O. Box 35 (MaD)\\
	FI-40014 University of Jyv\"askyl\"a\\
	Finland} \email{tuomas.t.orponen@jyu.fi}
\author[G. Yi]{Guangzeng Yi}
\address{$\ast$Department of Mathematics and Statistics\\ University of Jyv\"askyl\"a,
P.O. Box 35 (MaD)\\
FI-40014 University of Jyv\"askyl\"a\\
Finland}
\email{guangzeng.m.yi@jyu.fi}
\date{\today}
\subjclass[2020]{28A80 (primary) 05B99, 05D99, 51A20 (secondary)}
\keywords{Incidences, extremal combinatorics}
\thanks{T.O. is supported by the Research Council of Finland via the project \emph{Approximate incidence geometry}, grant no. 355453. Both T.O. and G.Y. are supported by the European Research Council (ERC) under the European Union’s Horizon Europe research and innovation programme (grant agreement No 101087499). }
\newcommand{\R}{\mathbb{R}}
\newcommand{\N}{\mathbb{N}}
\newcommand{\diam}{\operatorname{diam}}
\newcommand{\dist}{\operatorname{dist}}
\def\Barint_#1{\mathchoice
{\mathop{\vrule width 6pt height 3 pt depth -2.5pt
	\kern -8pt \intop}\nolimits_{#1}}%
{\mathop{\vrule width 5pt height 3 pt depth -2.6pt
	\kern -6pt \intop}\nolimits_{#1}}%
{\mathop{\vrule width 5pt height 3 pt depth -2.6pt
	\kern -6pt \intop}\nolimits_{#1}}%
{\mathop{\vrule width 5pt height 3 pt depth -2.6pt
	\kern -6pt \intop}\nolimits_{#1}}}
\numberwithin{equation}{section}
\theoremstyle{plain}
\newtheorem{thm}[equation]{Theorem}
\newtheorem*{"thm"}{"Theorem"}
\newtheorem{lemma}[equation]{Lemma}
\newtheorem{cor}[equation]{Corollary}
\newtheorem{proposition}[equation]{Proposition}
\theoremstyle{definition}
\newtheorem{definition}[equation]{Definition}
\theoremstyle{remark}
\newtheorem{remark}[equation]{Remark}
\newcommand{\nref}[1]{(\hyperref[#1]{#1})}
\DeclareMathSymbol{\intop}  {\mathop}{mathx}{"B3}
\begin{document}
	
\begin{abstract} Let $P \subset \R^{2}$ be a Katz-Tao $(\delta,s)$-set, and let $\mathcal{L}$ be a Katz-Tao $(\delta,t)$-set of lines in $\R^{2}$. A recent result of Fu and Ren gives a sharp upper bound for the $\delta$-covering number of the set of incidences $\mathcal{I}(P,\mathcal{L}) = \{(p,\ell) \in P \times \mathcal{L} : p \in \ell\}$. In fact, for $s,t \in (0,1]$,
$$ |\mathcal{I}(P,\mathcal{L})|_{\delta} \lesssim_{\epsilon} \delta^{-\epsilon -f(s,t)}, \qquad \epsilon > 0,$$
where $f(s,t) = (s^{2} + st + t^{2})/(s + t)$. For $s,t \in (0,1]$, we characterise the near-extremal configurations $P \times \mathcal{L}$ of this inequality: we show that if $|\mathcal{I}(P,\mathcal{L})|_{\delta} \approx \delta^{-f(s,t)}$, then $P \times \mathcal{L}$ contains "cliques" $P' \times \mathcal{L}'$ satisfying $|\mathcal{I}(P',\mathcal{L}')|_{\delta} \approx |P'|_{\delta}|\mathcal{L}'|_{\delta}$, 
$$|P'|_{\delta} \approx \delta^{-s^{2}/(s + t)} \quad \text{and} \quad |\mathcal{L}'|_{\delta} \approx \delta^{-t^{2}/(s + t)}.$$ \end{abstract}
	
\maketitle

\setcounter{tocdepth}{1}
\tableofcontents

\section{Introduction}\label{section 1}

This paper studies the $\delta$-covering number of incidences between sets of points and lines in $\R^{2}$. Let $P \subset \R^{2}$ and $\mathcal{L} \subset \mathcal{A}(2)$, where $\mathcal{A}(2)$ is the space of all (affine) lines in $\R^{2}$. The \emph{incidences} between $P$ and $\mathcal{L}$ are the pairs
\begin{displaymath} \mathcal{I}(P,\mathcal{L}) = \{(p,\ell) \in P \times \mathcal{L} : p \in \ell\}. \end{displaymath}
We equip $\R^{d}$ with the Euclidean norm $|\cdot|$, and $\mathcal{A}(2)$ with the metric 
\begin{displaymath} d_{\mathcal{A}(2)}(\ell_{1},\ell_{2}) = \|\pi_{L_{1}} - \pi_{L_{2}}\| + |a_{1} - a_{2}|, \end{displaymath}
whenever $\ell_{j} = L_{j} + a_{j}$, and $L_{j}$ is the $1$-dimensional subspace parallel to $\ell_{j}$. If $P \subset \R^{2}$, $\mathcal{L} \subset \mathcal{A}(2)$, and $\delta > 0$, the notations $|P|_{\delta}$ and $|\mathcal{L}|_{\delta}$ refer to the $\delta$-covering numbers relative to the Euclidean and $d_{\mathcal{A}(2)}$-metrics, respectively. For $\mathcal{I} \subset \R^{2} \times \mathcal{A}(2)$, the notation $|\mathcal{I}|_{\delta}$ refers to the $\delta$-covering number in the metric $d((x,\ell),(x',\ell')) = \max\{|x -x'|,d_{\mathcal{A}(2)}(\ell,\ell')\}$.

 The following notion is central to this paper:
\begin{definition}[$(\delta,\theta)$-clique] For $\delta \in 2^{-\N}$ and $\theta \in [0,1]$, a \emph{$(\delta,\theta)$-clique} is a pair $P \times \mathcal{L} \subset \R^{2} \times \mathcal{A}(2)$ with $|\mathcal{I}(P,\mathcal{L})|_{\delta} \geq \theta |P|_{\delta}|\mathcal{L}|_{\delta}$. \end{definition}

The main purpose will be, roughly speaking, to show that if $|\mathcal{I}(P,\mathcal{L})|_{\delta}$ is "extremal", then $P \times \mathcal{L}$ needs to contain large $(\delta,\theta)$-sub-cliques with $\theta \approx 1$. What is meant by "extremal"? A typical result in ($\delta$-discretised) incidence geometry gives an upper bound for $|\mathcal{I}(P,\mathcal{L})|_{\delta}$, provided that $P$ and $\mathcal{L}$ satisfy some non-concentration conditions. Then, an extremal configuration is a pair $P \times \mathcal{L}$ which satisfies these non-concentration conditions, and such that $|\mathcal{I}(P,\mathcal{L})|_{\delta}$ (nearly) realises the upper bound. In particular, the definition of "extremal" depends on the choice of non-concentration conditions.

We focus on the following definition, originally introduced by Katz and Tao \cite{KT01}:
\begin{definition}[Katz-Tao $(\delta,s,C)$-set]\label{def:katzTaoSet}
Let $P\subset\mathbb R^d$ be a bounded set, $d\geq 2$. Let $\delta\in (0,1]$, $0\leq s\leq d$ and $C>0$. We say that $P$ is a \emph{Katz-Tao $(\delta,s,C)$-set} if
\begin{equation}\label{def 1}
|P\cap B(x,r)|_\delta\leq C\left(\tfrac{r}{\delta}\right)^s, \qquad x \in \R^{2}, \, \delta \leq r \leq 1. 
\end{equation}
If $\mathcal{P} \subset \mathcal{D}_{\delta}(\R^{2})$, we say that $\mathcal{P}$ is a Katz-Tao $(\delta,s,C)$-set if $P := \cup \mathcal{P}$ satisfies \eqref{def 1}.

A line family $\mathcal{L} \subset \mathcal{A}(2)$ is called a \emph{Katz-Tao $(\delta,s,C)$-set} if 
\begin{displaymath} |\mathcal{L} \cap B_{\mathcal{A}(2)}(\ell,r)|_{\delta} \leq C\left(\tfrac{r}{\delta} \right)^s, \qquad \ell \in \mathcal{A}(2), \, \delta \leq r \leq 1, \end{displaymath}
Here $B_{\mathcal{A}(2)}(\ell,r)$ refers to a ball in the metric $d_{\mathcal{A}(2)}$.

A $(\delta,s,C)$-set (of points or lines) is called a $(\delta,s)$-set if the value of the constant $C > 0$ is irrelevant. 
\end{definition}

\begin{remark} Note that a Katz-Tao $(\delta,s)$-set of points or lines may well be infinite. A reasonable intuition is that $P$ is a finite union of $\delta$-discs or $\delta$-squares, whereas $\mathcal{L}$ is the collection of lines foliating a finite union of $\delta$-tubes or \emph{dyadic $\delta$-tubes} (see Definition \ref{def:dyadicTube}). \end{remark}

If $P \subset \R^{2}$ is a Katz-Tao $(\delta,s)$-set, and $\mathcal{L} \subset \mathcal{A}(2)$ is a Katz-Tao $(\delta,t)$-set of lines, the sharp upper bound for $|\mathcal{I}(P,\mathcal{L})|_{\delta}$ was recently established by Fu and Ren \cite{fu2022incidence}: 
\begin{thm}\label{t:FuRen}
Let $s, t\in (0,1]$ and $K_{P}, K_{\mathcal{L}}\geq 1$. For every $\epsilon>0$, there exists a constant $C=C(\epsilon,K_{P}, K_{\mathcal{L}})$ such that the following holds. Assume $P \subset [0,1]^{2}$ is a Katz-Tao $(\delta,s, K_{P})$-set and $\mathcal{L} \subset \mathcal{A}(2)$ is a Katz-Tao $(\delta,t,K_{\mathcal{L}})$- set. Then 
\[|\mathcal{I}(P,\mathcal{L})|_{\delta} \leq C \delta^{-\epsilon-f(s,t)},\]
where $f(s,t)=\frac{s^2+st+t^2}{s+t}$. Moreover, this bound is sharp up to $C\delta^{-\epsilon}$.
\end{thm}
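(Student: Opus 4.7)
My plan for proving Theorem \ref{t:FuRen} is to reduce it to a sharp Katz-Tao Furstenberg-set estimate via dyadic pigeonholing. The first step is to pass to a uniform sub-configuration $(P', \mathcal{L}') \subset (P, \mathcal{L})$ by standard pigeonholing (losing only a $\delta^{-o(1)}$ factor), so that every $p \in P'$ lies on $\sim D$ lines of $\mathcal{L}'$ and every $\ell \in \mathcal{L}'$ contains $\sim D'$ points of $P'$. Writing $N := |P'|_\delta$, $M := |\mathcal{L}'|_\delta$, and $I := |\mathcal{I}(P, \mathcal{L})|_\delta$, this gives $I \approx N D \approx M D'$, together with $N \leq \delta^{-s}$ and $M \leq \delta^{-t}$ from the Katz-Tao hypotheses. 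A second pigeonhole allows me to assume each slice $P' \cap \ell$ is a $1$-dimensional Katz-Tao $(\delta, s')$-set on $\ell$ for a common $s'$ with $D' \approx \delta^{-s'}$.

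Next, the union $F := \bigcup_{\ell \in \mathcal{L}'}(P' \cap \ell) \subset P'$ is a $(\delta, s', t)$-Furstenberg set in the Katz-Tao sense. I would invoke the sharp Furstenberg-set lower bound $|F|_\delta \gtrsim_\epsilon \delta^{\epsilon - \sigma(s', t)}$ for the appropriate exponent $\sigma$, known in the parameter range $s, t \in (0, 1]$ from recent work of Orponen--Shmerkin and Ren--Wang. Combined with $|F|_\delta \leq N \leq \delta^{-s}$, this forces $\sigma(s', t) \leq s + O(\epsilon)$; solving the implicit equation $\sigma(s', t) = s$ yields $s' \leq s^2/(s+t) + O(\epsilon)$, and hence $D' \lesssim_\epsilon \delta^{-\epsilon - s^2/(s+t)}$. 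Multiplying by $M \leq \delta^{-t}$ closes the argument:
\[ I \leq MD' \lesssim_\epsilon \delta^{-\epsilon - t - s^2/(s+t)} = \delta^{-\epsilon - f(s, t)}.\]
For sharpness, I would describe a multi-scale ``clique-of-cliques'' construction: arrange approximately $\delta^{-st/(s+t)}$ disjoint full bipartite cliques of size $\delta^{-s^2/(s+t)} \times \delta^{-t^2/(s+t)}$ inside ambient Katz-Tao $(\delta, s)$- and $(\delta, t)$-sets.

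The main obstacle is the Katz-Tao Furstenberg-set bound invoked in the second step: in the relevant parameter range it is a deep recent theorem, essentially equivalent to the Fu--Ren inequality itself. A genuinely self-contained proof would therefore have to establish both estimates in parallel, e.g.\ by a carefully orchestrated multi-scale induction on the incidence exponent as a function of $(s, t)$; this is indeed what Fu and Ren do in \cite{fu2022incidence}. The matching of Furstenberg exponents that makes $\sigma(s', t) = s$ yield exactly $s' = s^2/(s+t)$ is what ultimately encodes the specific form of $f(s, t)$.
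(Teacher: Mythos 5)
The paper does not prove Theorem \ref{t:FuRen}; it is quoted from Fu--Ren \cite{fu2022incidence}, and the only ``proof'' content in the present paper is Remark \ref{rem2}, which translates the $\delta$-covering-number formulation $|\mathcal{I}(P,\mathcal{L})|_{\delta}$ into Fu--Ren's original ball/tube-intersection count $|\overline{\mathcal{I}}(\mathcal{B},\mathcal{T})|$ while tracking the constants $K_P, K_{\mathcal{L}}$. Your proposal instead attempts to re-derive the incidence bound from scratch, so the two are addressing different tasks.

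Beyond that mismatch, there is a genuine gap in your reduction. You pigeonhole so that each line in $\mathcal{L}'$ carries a Katz-Tao $(\delta,s')$-set of $\sim \delta^{-s'}$ points, form the union $F$, and then invoke ``the sharp Furstenberg-set lower bound $|F|_\delta \gtrsim_\epsilon \delta^{\epsilon - \sigma(s',t)}$'' citing Orponen--Shmerkin and Ren--Wang. That Furstenberg theorem requires the line family to be a $(\delta,t)$-set in the sense of Definition \ref{def:deltaSSet} (non-concentration relative to its own cardinality), not merely a Katz-Tao $(\delta,t)$-set. A Katz-Tao $(\delta,t)$-set with cardinality far below $\delta^{-t}$ is allowed to be entirely contained in a small ball of $\mathcal{A}(2)$, and indeed the sharp examples for Theorem \ref{t:FuRen} (the ``cliques'' of Figure \ref{fig3}, with $|\mathcal{L}_j|_\delta \approx \delta^{-t^2/(s+t)}$ lines all clustered near a single point-pencil) are exactly of this concentrated form. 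If the Ren--Wang exponent $\sigma(s',t) = \min\{s'+t,\ (3s'+t)/2,\ s'+1\}$ applied here, then solving $\sigma(s',t)=s$ in the relevant branch gives $s' \le (2s-t)/3$ and hence $I \lesssim \delta^{-2(s+t)/3}$, which is strictly \emph{smaller} than $\delta^{-f(s,t)}$ (e.g.\ $4s/3$ vs.\ $3s/2$ when $s=t$); that contradicts the known sharpness of Fu--Ren's bound, so the reduction cannot be valid as stated. What you actually need is a Katz-Tao-flavoured Furstenberg exponent, namely the one satisfying $\sigma(s^2/(s+t),t)=s$, which works out to $\sigma(s',t) = \tfrac{1}{2}(s' + \sqrt{s'^2 + 4 s' t})$. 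That estimate is, as you yourself note, essentially a reformulation of Fu--Ren rather than a prior input, so the argument is circular. You correctly flag the circularity at the end, but the citation of Ren--Wang as supplying $\sigma$ in this Katz-Tao setting is misleading and hides where the real work lies. The sharpness sketch at the end is consistent with the paper's description of the extremal configurations and is fine.
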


In fact, Fu and Ren established a sharp bound for all $s \in (0,2]$ and $t \in (0,2]$ (the definition of $f$ is then piece-wise, depending on the range of $s,t$). In this paper we restrict attention to the cases $s,t \in (0,1]$.

\begin{remark}\label{rem3} The result of Fu and Ren was originally stated slightly differently. In \cite{fu2022incidence}, the set $P = \cup \mathcal{B}$ is a finite union of $\delta$-discs and $\mathcal{L} = \cup \mathcal{T}$ is a finite union of $\delta$-tubes. The incidences are defined in \cite{fu2022incidence} as $\overline{\mathcal{I}}(\mathcal{B},\mathcal{T}) = \{(B,T) \in\mathcal{B}\times \mathcal{T}: B \cap T \neq \emptyset\}$. Under the hypotheses of Theorem \ref{t:FuRen}, the authors established the inequality $|\overline{\mathcal{I}}(\mathcal{B},\mathcal{T})| \lesssim_{\epsilon} \delta^{-\epsilon - f(s,t)}$. We will check in Remark \ref{rem2} that Theorem \ref{t:FuRen} follows, as stated, from its original version in \cite{fu2022incidence}.  \end{remark}

\begin{remark} It took a few attempts before we ended up studying the quantity $|\mathcal{I}(P,\mathcal{L})|_{\delta}$. Other, perhaps more obvious, alternatives would have been to consider incidences between $\delta$-balls and ordinary tubes (as in \cite{fu2022incidence}), or dyadic $\delta$-squares and dyadic $\delta$-tubes (as in e.g. \cite{OS23}). However, if our main result, Theorem \ref{main}, was stated for any one of these "standard" choices, it seemed hard to deduce the other "standard" choices as corollaries. 

We will not explain the problem in detail, but it has to do with the following phenomenon. Assume that $\mathcal{B}$ and $\mathcal{T}$ are $\delta$-neighbourhoods of points and lines in some metric of $\R^{2}$. Then, the cardinality $|\overline{\mathcal{I}}(\mathcal{B},\mathcal{T})| = |\{(B,T) \in \mathcal{B} \times \mathcal{T} : B \cap T \neq \emptyset\}|$ is far from being (roughly) invariant under bi-Lipschitz transformations of that metric. For example, there are configurations where a large family of Euclidean $\delta$-tubes narrowly avoids a large family of Euclidean $\delta$-balls, but the collinear $(2\delta)$-tubes already hit all the concentric $(2\delta)$-balls. In contrast, the $\delta$-covering number $|\mathcal{I}(P,\mathcal{L})|_{\delta}$ only changes by a constant if the metrics of $\R^{2}$ and $\mathcal{A}(2)$ are replaced by bi-Lipschitz equivalent ones.

Theorem \ref{main} is formulated in terms of $|\mathcal{I}(P,\mathcal{L})|_{\delta}$ to make it more robust. Now it actually implies other (possibly more) "standard" versions as corollaries. We mention one concrete example in Remark \ref{rem4}, but omit the straightforward details. \end{remark}

The bound in Theorem \ref{t:FuRen} is sharp, but weaker than the \emph{Szemer\'edi-Trotter} bound on incidences between families of points and lines. If $P \subset \R^{2}$ is a finite set, and $\mathcal{L}$ is a finite set of lines, Szemer\'edi and Trotter \cite{MR729791} in 1983 established the following:
\begin{equation}\label{st} |\mathcal{I}(P,\mathcal{L})| \lesssim |P|^{2/3}|\mathcal{L}|^{2/3} + |P| + |\mathcal{L}|. \end{equation} 
For example, if $s = t \in (0,1]$, then Theorem \ref{t:FuRen} gives $|\mathcal{I}(P,\mathcal{L})|_{\delta} \lesssim_{\epsilon} \delta^{-3s/2 - \epsilon}$, whereas a formal application of \eqref{st} would predict that $|\mathcal{I}(P,\mathcal{L})|_{\delta} \lesssim \delta^{-4s/3}$.
\begin{figure}[h!]
\begin{center}
\begin{overpic}[scale = 0.6]{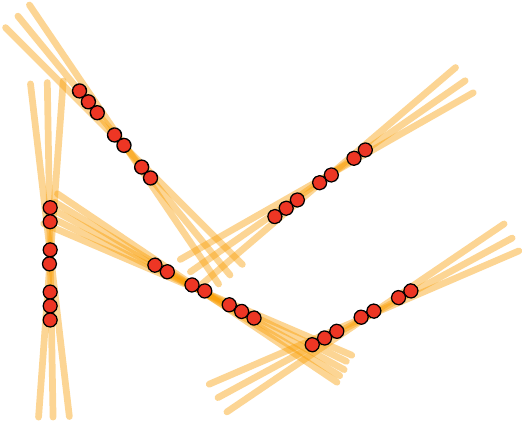}
\end{overpic}
\caption{A pair $P \cap \mathcal{L}$ admitting a decomposition into large $\delta$-sub-cliques.}\label{fig3}
\end{center}
\end{figure}

In the sharp examples provided by Fu and Ren to Theorem \ref{t:FuRen}, the large number of incidences is due to many large $(\delta,1)$-cliques. In fact, in these examples both $P$ and $\mathcal{L}$ are partitioned as $P = P_{1} \cup \ldots \cup P_{N}$ and $\mathcal{L} = \mathcal{L}_{1} \cup \ldots \cup \mathcal{L}_{N}$ in such a way that $P_{j} \times \mathcal{L}_{j}$ is a $(\delta,1)$-clique for all $1 \leq j \leq N$, see Figure \ref{fig3} for an illustration.

The Katz-Tao conditions impose the following restriction on $(\delta,1)$-cliques: if $P \times \mathcal{L}$ is a $(\delta,1)$-clique, where $P$ is a Katz-Tao $(\delta,s)$-set, and $\mathcal{L} \subset \mathcal{A}(2)$ is a Katz-Tao $(\delta,t)$-set, then $|P|_{\delta}^{t}|\mathcal{L}|_{\delta}^{s} \lesssim \delta^{-st}$. This follows from Proposition \ref{prop4} below. Optimising under this constraint, one finds that the most $\delta$-incidences are generated by a $(\delta,1)$-clique decomposition where 
\begin{equation}\label{form37} |P_{j}|_{\delta} \equiv \delta^{-s^{2}/(s + t)} \quad \text{and} \quad |\mathcal{L}_{j}|_{\delta} \equiv \delta^{-t^{2}/(s + t)}. \end{equation}
Indeed, the number of incidences in such a configuration matches the upper bound in Theorem \ref{t:FuRen}, up to the constant $C\delta^{-\epsilon}$.

Our main result shows that any (near-)extremal configuration for Theorem \ref{t:FuRen} must contain cliques of (nearly) the size \eqref{form37}:

\begin{thm}\label{main}
For every $u\in(0,1]$ and $s,t \in (0,1]$, there exist $\delta_0=\delta_0(s,t,u)>0$ and $\epsilon=\epsilon(s,t,u)>0$ such that the following holds for any $\delta\in (0, \delta_0]$. Write $f(s,t)=\tfrac{s^2+st+t^2}{s+t}$. Let $P \subset [0,1]^{2}$ be a Katz-Tao $(\delta,s,\delta^{-\epsilon})$-set, and let $\mathcal{L} \subset \mathcal{A}(2)$ be a Katz-Tao $(\delta,t,\delta^{-\epsilon})$-set. If
\begin{equation}\label{form36}
|\mathcal{I}(P,\mathcal{L})|_{\delta} \geq \delta^{\epsilon-f(s,t)},
\end{equation}
then there exists a $(\delta,\delta^{u})$-clique $P' \times \mathcal{L}' \subset P \times \mathcal{L}$ with 
\begin{equation}\label{form41} |P'|_{\delta} \geq \delta^{u - s^{2}/(s + t)} \quad \text{and} \quad |\mathcal{L}'|_{\delta} \geq \delta^{u -t^{2}/(s + t)}. \end{equation}
\end{thm}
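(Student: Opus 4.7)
The plan is to locate the clique inside a single cell of a mixed-scale decomposition of $P \times \mathcal{L}$, chosen so that the Katz-Tao bounds saturate at the target clique sizes. First, standard dyadic pigeonholing extracts refined subsets $P_0 \subseteq P$ and $\mathcal{L}_0 \subseteq \mathcal{L}$ in which every point of $P_0$ is $\delta$-incident to approximately the same number $\mu$ of lines in $\mathcal{L}_0$, and every line of $\mathcal{L}_0$ contains approximately $\nu$ points of $P_0$, while preserving $|\mathcal{I}(P_0, \mathcal{L}_0)|_\delta \gtrsim \delta^{O(\epsilon) - f(s,t)}$. Combined with the Katz-Tao upper bounds $|P_0|_\delta \leq \delta^{-s - \epsilon}$ and $|\mathcal{L}_0|_\delta \leq \delta^{-t - \epsilon}$, near-extremality forces $\mu \gtrsim \delta^{-t^2/(s+t) - O(\epsilon)}$ and $\nu \gtrsim \delta^{-s^2/(s+t) - O(\epsilon)}$, matching the expected multiplicities of the target clique.

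Next, introduce the two scales $\rho_1 = \delta^{t/(s+t)}$ for points and $\rho_2 = \delta^{s/(s+t)}$ for lines; these are precisely the scales at which Katz-Tao saturates at the target clique sizes, namely $|P_0 \cap B|_\delta \leq \delta^{-s^2/(s+t) - \epsilon}$ for any $\rho_1$-ball $B \subset \R^2$ and $|\mathcal{L}_0 \cap U|_\delta \leq \delta^{-t^2/(s+t) - \epsilon}$ for any $\rho_2$-ball $U \subset \mathcal{A}(2)$. Cover $P_0$ by $\rho_1$-balls and $\mathcal{L}_0$ by $\rho_2$-balls and decompose
$$|\mathcal{I}(P_0, \mathcal{L}_0)|_\delta \leq \sum_{(B, U) \text{ active}} |\mathcal{I}(P_0 \cap B, \mathcal{L}_0 \cap U)|_\delta.$$
Each summand is bounded by the product $|P_0 \cap B|_\delta \cdot |\mathcal{L}_0 \cap U|_\delta \leq \delta^{-(s^2+t^2)/(s+t) - O(\epsilon)}$, which matches exactly the Fu-Ren total $\delta^{-f(s,t)}$ once multiplied by the heuristic cell count $\delta^{-st/(s+t)}$. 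The objective is to pigeonhole over the per-cell incidence counts to isolate a single cell $(B^*, U^*)$ whose $\delta$-incidence count is close to the per-cell maximum; setting $P' := P_0 \cap B^*$ and $\mathcal{L}' := \mathcal{L}_0 \cap U^*$ then yields the desired $(\delta, \delta^u)$-clique with the sizes of \eqref{form41}.

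The principal obstacle is obtaining a tight enough upper bound on the number of active $(B, U)$ cells --- specifically, $\lesssim \delta^{-st/(s+t) - O(\epsilon)}$ --- so that pigeonholing produces a near-saturated cell. A direct application of Theorem \ref{t:FuRen} at the coarser scale $\max(\rho_1, \rho_2)$ yields a bound that is polynomially too weak when $s \neq t$, and the scale mismatch requires a more delicate argument. We anticipate proceeding by induction on $\log(1/\delta)$, exploiting the scale-multiplicative inequality
$$|\mathcal{I}(P, \mathcal{L})|_\delta \lesssim |\mathcal{I}(P_\rho, \mathcal{L}_\rho)|_\rho \cdot (\delta/\rho)^{-f(s,t) - \epsilon},$$
which follows by decomposing at an intermediate scale $\rho \in (\delta,1)$ and applying Theorem \ref{t:FuRen} within each cell. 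This inequality propagates near-extremality at scale $\delta$ to near-extremality at every coarser scale, so the induction hypothesis at scale $\rho$ supplies the missing cell-count bound. The main delicate calculation is balancing the accumulated loss on the parameter $u$ across induction steps against the number of steps.
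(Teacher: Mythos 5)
Your proposal contains a genuine gap at the step you yourself flag, and the proposed workaround (the scale-multiplicative inequality plus induction) does not close it. Let me spell out why, and then contrast with what the paper actually does.

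The pigeonholing to uniformise the multiplicities $\mu,\nu$ is fine, and the target sizes $\rho_1=\delta^{t/(s+t)}$, $\rho_2=\delta^{s/(s+t)}$ and the per-cell Katz-Tao bounds are all correct. The problem is that the per-cell pigeonhole argument requires the number of \emph{active} pairs $(B,U)$ contributing to the incidence count to be $\lessapprox \delta^{-st/(s+t)}$, and nothing in the hypotheses forces this. The Katz-Tao condition on $P$ gives only a \emph{lower} bound on $|P_0|_{\rho_1}$ (roughly $\rho_1^{-s}$); the trivial upper bound is $\min(\rho_1^{-2},|P_0|_\delta)$, which can be as large as $\delta^{-2t/(s+t)}$. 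For each $B$ there are $\sim \rho_2^{-1}$ potentially active $U$'s, so the naive cell count is $\delta^{-2t/(s+t)}\cdot\delta^{-s/(s+t)}$, which exceeds $\delta^{-st/(s+t)}$ for all $s,t\in(0,1]$ with $s\ne t$, and also for $s=t$. Without a cell-count bound, the pigeonhole only produces a cell whose incidence count is the \emph{average} $T/N$, not the maximum $M\approx\delta^{-(s^2+t^2)/(s+t)}$, and the ratio $T/N$ can be far below $M$. Your proposed fix --- the scale-multiplicative inequality
$|\mathcal{I}(P,\mathcal{L})|_\delta \lesssim |\mathcal{I}(P_\rho,\mathcal{L}_\rho)|_\rho\cdot(\delta/\rho)^{-f(s,t)-\epsilon}$
applied at a single scale $\rho$ --- propagates near-extremality to coarser scales, but it does so at matched scales $(\rho,\rho)$, not at the mismatched pair $(\rho_1,\rho_2)$ that your cells use. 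When $s\ne t$ the mismatch is a polynomial gap $\rho_1/\rho_2=\delta^{(t-s)/(s+t)}$, and the induction does not recover the required mixed-scale cell count. In essence, bounding the active cell count by $\delta^{-st/(s+t)}$ is equivalent to the structure theorem itself, so this step is circular as stated.

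The paper takes a fundamentally different route that avoids guessing the scale pair $(\rho_1,\rho_2)$ in advance. Working tube-by-tube, it sets $\overline{\mathcal{P}}_T$ to be the $\delta$-squares on $T$ carrying incidences, finds $|\overline{\mathcal{P}}_T|\approx\delta^{-s^2/(s+t)}$, and then applies Proposition~\ref{prop2} (a branching-function argument using Lemmas~\ref{l:combinatorial-weak} and~\ref{rescaled set}) to locate, for each $T$, a scale $\Delta_T$ and a square $Q_T$ inside which $\overline{\mathcal{P}}_T$ is non-concentrated after rescaling. After pigeonholing to fix $\Delta$, the key claim is the two-sided estimate \eqref{mainClaim}/\eqref{mainClaim2} pinning $\Delta$ at $\approx\delta^{t/(s+t)}$. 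The lower bound for $\Delta$ is elementary (Katz-Tao on $P$). The upper bound is the hard part: for $s<t$ it follows from the $2$-ends incidence bound (Proposition~\ref{prop3}) applied inside $Q_0$ after rescaling, exploiting the strict inequality $s<t$ in \eqref{form33}; for $s=t$ the $2$-ends bound is \emph{not} enough and the paper invokes the Furstenberg-set estimate Theorem~\ref{thm:Furstenberg-minimal-non-concentration} from \cite{2023arXiv230110199O}, verifying the single-scale non-concentration hypothesis \eqref{form10} for the rescaled tube family. Your proposal contains no analogue of either the branching-function step (which adaptively \emph{discovers} the right scale rather than prescribing it) or the Furstenberg input (which is the mechanism that actually forces concentration in the equal-exponent case). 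Without one of these, the cell-count obstruction you identified remains unresolved.
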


\begin{remark}\label{rem5} The proof of Theorem \ref{main} (see \eqref{clique}) yields a $(\delta,\delta^{u})$-clique $P' \times \mathcal{L}'$ of the form $P' = P \cap Q$ and $\mathcal{L}' = \mathcal{L} \cap \mathcal{T}'$, where $Q$ is a dyadic square of some side-length $\Delta \in [\delta,1]$, and $\mathcal{T}' \subset \mathcal{T}^{\delta}$ is a family of dyadic $\delta$-tubes (see Definition \ref{def:dyadicTube}). Here $\mathcal{L} \cap \mathcal{T}'$ refers to the set of lines in $\mathcal{L}$ contained in at least one element of $\mathcal{T}'$. \end{remark}

\begin{remark}\label{rem4} Let $s,t \in (0,1]$ and $u > 0$. In the notation of Remark \ref{rem3}, assume that $\mathcal{B}$ is a Katz-Tao $(\delta,s)$-set of $\delta$-discs, and $\mathcal{T}$ is a Katz-Tao $(\delta,t)$-set of $\delta$-tubes satisfying $|\overline{\mathcal{I}}(\mathcal{B},\mathcal{L})| \geq \delta^{\epsilon - f(s,t)}$. Then, if $\delta > 0$ is sufficiently small in terms of $s,t,u$, there exist $\mathcal{B}' \subset \mathcal{B}$ and $\mathcal{T}' \subset \mathcal{T}$ such that $|\overline{\mathcal{I}}(\mathcal{B}',\mathcal{T}')| \geq \delta^{u}|\mathcal{B}'||\mathcal{T}'|$, and $\mathcal{B}',\mathcal{T}'$ satisfy \eqref{form41}. This follows easily from Theorem \ref{main} applied to $P = \cup \mathcal{B}$ and $\mathcal{L} = \{\ell \in \mathcal{A}(2) : \ell \subset T \text{ for some } T \in \mathcal{T}\}$.\end{remark}

While Theorem \ref{main} only states the existence of a single $(\delta,\delta^{u})$-clique, a formal "exhaustion argument" shows that there are many $(\delta,\delta^{u})$-cliques: they are indeed responsible for a major part of the incidences.

\begin{cor}\label{cor3} Under the hypotheses of Theorem \ref{main}, there exists a list
\begin{displaymath} (P_{1} \times \mathcal{L}_{1}),\ldots,(P_{n} \times \mathcal{L}_{n}) \subset P \times \mathcal{L} \end{displaymath}
of $(\delta,\delta^{u})$-cliques satisfying \eqref{form41}, with the sets $\mathcal{D}_{\delta}(P_{j})$ disjoint, and $\sum_{j} |\mathcal{I}(P_{j},\mathcal{L}_{j})|_{\delta} \geq \delta^{u - f(s,t)}$. \end{cor}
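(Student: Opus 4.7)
The argument is a greedy exhaustion based on Theorem \ref{main}. Let $\epsilon_1 := \epsilon(s,t,u)$ be the constant from Theorem \ref{main}, and fix the corollary's $\epsilon \in (0, \min(u, \epsilon_1))$ so that $\delta^{\epsilon_1 - \epsilon} \leq 1/2$ for all $\delta \leq \delta_0$. Set $P^{(0)} := P$; at step $k$, provided $|\mathcal{I}(P^{(k)},\mathcal{L})|_\delta \geq \delta^{\epsilon_1 - f(s,t)}$, Theorem \ref{main} applies to the pair $(P^{(k)},\mathcal{L})$ --- the Katz-Tao hypotheses are inherited by $P^{(k)} \subset P$, and $\delta^{-\epsilon} \leq \delta^{-\epsilon_1}$ --- yielding a $(\delta,\delta^u)$-clique $(P_k,\mathcal{L}_k)$ satisfying \eqref{form41} with $P_k = P^{(k)} \cap Q_k$ for some dyadic square $Q_k$ by Remark \ref{rem5}. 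Set $P^{(k+1)} := P^{(k)} \setminus Q_k$ and iterate; otherwise terminate with $n := k$. Each step removes at least $\delta^{u - s^2/(s+t)} > 0$ covering mass from the finite quantity $|P^{(k)}|_\delta \leq \delta^{-\epsilon - s}$, so the process ends in finitely many steps. For $j > k$, $P_j \subset P^{(j)} \subset P^{(k+1)}$ is disjoint from $Q_k \supset P_k$, so the sets $\mathcal{D}_\delta(P_k)$ are pairwise disjoint.

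\textbf{Sum estimate.} Exploiting the disjointness of the $\mathcal{D}_\delta(P_k)$ and the additivity of the incidence covering number over disjoint dyadic point sets, combined with the termination criterion and hypothesis \eqref{form36},
\[ \sum_{k=1}^n |\mathcal{I}(P_k,\mathcal{L})|_\delta = |\mathcal{I}(P,\mathcal{L})|_\delta - |\mathcal{I}(P^{(n+1)},\mathcal{L})|_\delta \geq \tfrac{1}{2} \delta^{\epsilon - f(s,t)} \geq \delta^{u-f(s,t)}, \]
where the last inequality uses $\epsilon \leq u$.

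\textbf{Main obstacle.} The inequality above controls $\sum_k |\mathcal{I}(P_k,\mathcal{L})|_\delta$, whereas the corollary asks for $\sum_k |\mathcal{I}(P_k,\mathcal{L}_k)|_\delta$; since $\mathcal{L}_k$ can be a proper subset of $\mathcal{L}$, these quantities may differ, and this is the delicate point of the proof. My plan for closing the gap is a two-level exhaustion: within each $Q_k$, iterate Theorem \ref{main} with a strictly smaller parameter $u' < u$ on the residual pairs $(P_k, \mathcal{L} \setminus \bigcup_{j'<j}\mathcal{L}_{k,j'})$ to produce an inner family of cliques $(P_k, \mathcal{L}_{k,j})$ whose $\mathcal{L}$-components collectively saturate $|\mathcal{I}(P_k,\mathcal{L})|_\delta$. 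Choosing $u'$ small enough that each inner $P$-component has covering mass strictly exceeding $\delta^{u - s^2/(s+t)}$, one may then dyadically partition $P_k$ into disjoint sub-blocks $P_{k,j}$, one per inner index $j$, each satisfying \eqref{form41} at the coarser parameter $u$ (the slack $\delta^{u'-u}$ absorbs the loss from splitting). Re-indexing $(k,j)\mapsto k'$ produces the required disjoint family with the target sum bound.
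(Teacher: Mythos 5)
Your greedy exhaustion framework is in the right spirit, and you are right to flag the gap: the additivity identity controls $\sum_k |\mathcal{I}(P_k,\mathcal{L})|_\delta$, but the corollary demands $\sum_k |\mathcal{I}(P_k,\mathcal{L}_k)|_\delta$, and the ratio $|\mathcal{I}(P_k,\mathcal{L}_k)|_\delta / |\mathcal{I}(P_k,\mathcal{L})|_\delta$ can be arbitrarily small. The problem is that your proposed ``two-level exhaustion'' to close this gap cannot run. Theorem~\ref{main} requires the incidence count to reach the threshold $\delta^{\epsilon'-f(s,t)}$ (with $\epsilon'=\epsilon(s,t,u')$ small), but for a single outer clique $P_k \subset Q_k$ one only has $|\mathcal{I}(P_k,\mathcal{L})|_\delta$ of order roughly $\delta^{-(s^2+t^2)/(s+t)}$ --- indeed, the Katz--Tao conditions force this via Proposition~\ref{prop4}. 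Since $f(s,t)-(s^2+t^2)/(s+t)=st/(s+t)>0$, one is short by a factor $\delta^{st/(s+t)}$, which no choice of small $u'$ (and hence small $\epsilon'$) can make up. In other words, Theorem~\ref{main} is calibrated to the global incidence count $\delta^{-f(s,t)}$; the incidence footprint of a single clique is smaller by precisely the ``number of cliques'' factor $\delta^{st/(s+t)}$, so the hypothesis of the theorem is structurally violated at the inner level.

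The paper's proof stays at the outer level but supplies two ingredients your proposal lacks. First, it pigeonholes $P$ down to a subset $\bar{P}$ on which each $\delta$-cube has bounded line multiplicity, $M(p)\lesssim\delta^{-u/4-t^2/(s+t)}$ (equation~\eqref{form42}). Second, it invokes Proposition~\ref{prop4} and the diameter computation \eqref{form39} to obtain a matching \emph{upper} bound $|P_j|_\delta\lesssim\delta^{-u/4-s^2/(s+t)}$; Theorem~\ref{main} alone gives only a lower bound, and without the upper bound a single removal could swallow an unbounded fraction of $P$ or of the incidences. Combining these two bounds via Lemma~\ref{lemma4} gives $|\mathcal{I}(P_j,\mathcal{L})|_\delta\lesssim\delta^{-u/2-(s^2+t^2)/(s+t)}$, which is smaller than $|\mathcal{I}(\bar{P},\mathcal{L})|_\delta\gtrapprox\delta^{\epsilon-f(s,t)}$ by a power of $\delta$. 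Hence the exhaustion runs at least $n\approx\delta^{3u/4-st/(s+t)}$ times, and the target sum is then obtained by multiplying $n$ with the per-clique lower bound $|\mathcal{I}(P_j,\mathcal{L}_j)|_\delta\geq\delta^{cu}|P_j|_\delta|\mathcal{L}_j|_\delta\gtrsim\delta^{3cu-(s^2+t^2)/(s+t)}$ coming from the clique property and \eqref{form41} --- not by decomposing the global count as in your sum estimate. The missing ideas in your proposal are thus the upper bound on the clique's own incidence footprint (needing Proposition~\ref{prop4} and a multiplicity pigeonhole) and the resulting \emph{quantitative} lower bound on the number of iterations.
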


Does Corollary \ref{cor3} imply that the only configurations $P \times \mathcal{L}$ satisfying \eqref{form36} must contain a sub-configuration as in Figure \ref{fig3}? In other words, do $(\delta,\delta^{u})$-cliques resemble the "sheaves" from Figure \ref{fig3}? The answer is affirmative, up to passing to further subsets. This follows from the next proposition, combined with the subsequent remark:

\begin{proposition}\label{prop4} There exists an absolute constant $C \geq 1$ such that the following holds. Let $P \times \mathcal{L}$ be a $(\delta,\theta)$-clique. Then, there exists a rectangle $R \subset \R^{2}$ of dimensions $C(\delta \times \Delta)$, where $\Delta \in [\delta,2]$, such that
\begin{equation}\label{form40} |P \cap R|_{\delta} \gtrsim \theta^{2}|P|_{\delta} \quad \text{and} \quad |\{\ell \in \mathcal{L} : R \subset [\ell]_{C\delta}\}|_{\delta} \gtrapprox \theta^{4}|\mathcal{L}|_{\delta}. \end{equation}
Here $[\ell]_{C\delta}$ is the $C\delta$-neighbourhood of $\ell$. In particular: if $P$ is a Katz-Tao $(\delta,s)$-set, and $\mathcal{L}$ is a Katz-Tao $(\delta,t)$-set, then $|P|_{\delta}^{t}|\mathcal{L}|_{\delta}^{s} \lessapprox \theta^{-6}\delta^{-st}$. \end{proposition}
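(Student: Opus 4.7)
The plan is to use a two-step Cauchy--Schwarz/popularity argument to locate two ``pivot'' balls $B_0, B_1$ and then take $R$ to be a thin rectangle associated to them. After a standard discretisation replacing $P$ by a family $\calB$ of dyadic $\delta$-squares with $|P|_\delta \approx |\calB|$, and $\mathcal{L}$ by a family $\calT$ of dyadic $\delta$-tubes with $|\mathcal{L}|_\delta \approx |\calT|$, the clique hypothesis reads $|\{(B,T) \in \calB \times \calT : B \cap T \neq \emptyset\}| \gtrsim \theta |\calB| |\calT|$.

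Applying Cauchy--Schwarz to the sum $\sum_T |\calB \cap T| \gtrsim \theta |\calB| |\calT|$ gives $\sum_T |\calB \cap T|^2 \gtrsim \theta^2 |\calB|^2 |\calT|$, which re-indexes to $\sum_B \sum_{T \ni B} |\calB \cap T| \gtrsim \theta^2 |\calB|^2 |\calT|$. Pigeonholing on $B$ produces a first pivot $B_0$ with $\sum_{T \in \calT(B_0)} |\calB \cap T| \gtrsim \theta^2 |\calB| |\calT|$, and a standard popularity extraction yields a rich subfamily $\calT_1 \subset \calT(B_0)$ of size $|\calT_1| \gtrsim \theta^2 |\calT|$ with $|\calB \cap T| \gtrsim \theta^2 |\calB|$ for every $T \in \calT_1$. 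Summing $|\calB \cap T|$ over $T \in \calT_1$ gives $\gtrsim \theta^4 |\calB| |\calT|$, so a second pigeonhole on $\calB \setminus \{B_0\}$ supplies a pivot $B_1$ with $|\{T \in \calT_1 : B_1 \in T\}| \gtrsim \theta^4 |\calT|$.

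Write $\Delta := |B_0 - B_1| \in [\delta,2]$ and fix a tube $T^* \in \calT_1$ through $B_1$. The cleanest case is when $B_1$ can be taken at distance comparable to $1$ from $B_0$: then $R = [T^*]_{C\delta}$ works, because $|P \cap R|_\delta \geq |\calB \cap T^*| \gtrsim \theta^2 |P|_\delta$ by $T^* \in \calT_1$, and every tube in $\calT_1 \cap \calT(B_1)$ lies within $O(\delta)$ of $T^*$ in $d_{\mathcal{A}(2)}$ (two $\delta$-balls at distance comparable to $1$ pin down an ambient line up to error $O(\delta)$), giving $|\{\ell : R \subset [\ell]_{C\delta}\}|_\delta \gtrsim \theta^4 |\mathcal{L}|_\delta$. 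For smaller $\Delta$, one takes instead the shorter rectangle of dimensions $C\delta \times C\Delta$ along $[B_0,B_1]$; the tube bound is again a direct geometric computation (tubes through both pivots have direction pinned to $O(\delta/\Delta)$, so their $C\delta$-fattenings all contain this shorter $R$), but the point bound calls for a refined choice of $B_1$, for instance via a further dyadic pigeonhole on $|B_0 - B_2|$ combined with a direction-based pigeonhole around $B_0$, to guarantee that enough balls of $\calB$ populate the chosen segment. Simultaneously forcing both lower bounds on the same rectangle in the short-$\Delta$ regime is the main technical obstacle.

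Granted the rectangle, the ``in particular'' Katz--Tao bound is immediate: $R$ sits inside a disc of radius $\Delta$, hence $|P \cap R|_\delta \lesssim (\Delta/\delta)^s$; and the lines $\ell$ with $R \subset [\ell]_{C\delta}$ form a $d_{\mathcal{A}(2)}$-ball of radius $O(\delta/\Delta)$, whence $|\{\ell : R \subset [\ell]_{C\delta}\}|_\delta \lesssim (1/\Delta)^t$. Raising the first inequality to the $t$-th power and the second to the $s$-th power and multiplying, the $\Delta$-factors cancel and one arrives at $|P|_\delta^{t} |\mathcal{L}|_\delta^{s} \lessapprox \theta^{-2t - 4s}\delta^{-st} \leq \theta^{-6}\delta^{-st}$, since $s, t \in (0,1]$.
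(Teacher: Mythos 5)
Your first Cauchy--Schwarz step is in the same spirit as the paper's, but the argument diverges at the second pigeonhole, and the gap you flag yourself in the short-$\Delta$ regime is genuine and is precisely the crux of the proposition. By choosing pivot \emph{balls} $B_0$ and $B_1$ you obtain a large bundle of tubes whose directions are pinned to within $O(\delta/\Delta)$, which does give the line count for a $C\delta \times C\Delta$ rectangle along $[B_0,B_1]$; but nothing in your setup forces the $P$-mass of a tube $T \in \mathcal{T}_1$ onto that short segment. The $\gtrsim \theta^2|\mathcal{B}|$ squares of $\mathcal{B}\cap T$ can be distributed over the full unit length of $T$, so the lower bound $|P\cap R|_\delta \gtrsim \theta^2 |P|_\delta$ does not follow, and the additional dyadic/direction pigeonholes you sketch would have to be carried out carefully and would likely cost further powers of $\theta$. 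The paper sidesteps this by double-counting over \emph{pairs of tubes} rather than ball--tube incidences: the Cauchy--Schwarz step there is
\begin{displaymath}
\theta|\mathcal{P}||\mathcal{T}| \lesssim \sum_{p}|\mathcal{T}_{p}| \leq |\mathcal{P}|^{1/2}\Bigl(\sum_{T,T'}|\mathcal{P}_{T}\cap\mathcal{P}_{T'}|\Bigr)^{1/2},
\end{displaymath}
which produces a single popular tube $T_0$ and a family $\mathcal{T}_0$ of $\gtrsim\theta^2|\mathcal{T}|$ tubes $T$ with $|\mathcal{P}_T\cap\mathcal{P}_{T_0}| \gtrsim \theta^2|\mathcal{P}|$. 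The decisive geometric point is that the squares of $\mathcal{P}_T\cap\mathcal{P}_{T_0}$, being within $O(\delta)$ of both $T$ and $T_0$, are automatically confined to a single rectangle of dimensions $\sim \delta \times (\delta/\alpha)$ aligned with $T_0$, where $\alpha \sim |\sigma(T)-\sigma(T_0)|$; so the very condition making $T$ popular also certifies $|P\cap R|_\delta \gtrsim \theta^2|P|_\delta$ for that rectangle. A bounded-overlap count along $T_0$ then caps the number of such ``good'' rectangles at $\lesssim\theta^{-2}$, and a final pigeonhole sends $\gtrapprox\theta^4|\mathcal{T}|$ tubes (after a logarithmic pigeonhole in $\alpha$) into one of them. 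In short, to close your gap you would need to replace the pivot ball by a pivot tube and count the overlaps $|\mathcal{P}_T\cap\mathcal{P}_{T_0}|$, because that is what couples the two lower bounds in \eqref{form40} onto the same thin rectangle. Your concluding ``in particular'' computation is correct and matches the paper's.
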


In Proposition \ref{prop4}, the notation $A \lessapprox B$ means that $A \leq C(\log(1/\delta))^{C}B$ for some absolute constant $C \geq 1$. This notation will serve various purposes in the paper, and we will always define it separately. 

\begin{remark} When Proposition \ref{prop4} is applied to the $(\delta,\delta^{u})$-clique $P' \times \mathcal{L}'$ in Theorem \ref{main}, the diameter $\Delta$ of the rectangle $R$ is (almost) uniquely determined. In fact,
\begin{equation}\label{form39} \diam(R) \sim \Delta \approx \delta^{t/(s + t)}. \end{equation}
The "$\approx$" and "$\lessapprox$" notations in this remark is allowed to hide factors of the form $\delta^{-C\epsilon}$ and $\delta^{-Cu}$. To verify \eqref{form39}, we first deduce from the lower bound $|P'|_{\delta} \gtrapprox \delta^{- s^{2}/(s + t)}$ combined with the Katz-Tao $(\delta,s)$-set condition of $P$ that
\begin{displaymath} (\Delta/\delta)^{s} \gtrapprox |P' \cap R|_{\delta} \gtrapprox |P'|_{\delta} \gtrapprox \delta^{-s^{2}/(s + t)}  \Longrightarrow  \Delta \gtrapprox \delta^{1 - s/(s + t)} = \delta^{t/(s + t)}. \end{displaymath} 
Second, all the lines $\ell \in \mathcal{L}'$ with $R \subset [\ell]_{C\delta}$ are themselves contained in a $d_{\mathcal{A}(2)}$-ball of radius $\sim (\delta/\Delta)$. Consequently, now using the lower bound $|\mathcal{L}'|_{\delta} \gtrapprox \delta^{-t^{2}/(s + t)}$ and the Katz-Tao $(\delta,t)$-set condition of $\mathcal{L}' \subset \mathcal{L}$,
\begin{displaymath} \delta^{-t^{2}/(s + t)} \lessapprox |\mathcal{L}'|_{\delta} \lessapprox |\{\ell \in \mathcal{L}' : R \subset [\ell]_{C\delta}\}|_{\delta} \lessapprox \Delta^{-t}  \Longrightarrow  \Delta \lessapprox \delta^{t/(s + t)}. \end{displaymath}
Combining these inequalities gives \eqref{form39}. Therefore, combined with Proposition \ref{prop4}, Theorem \ref{main} provides the following geometric information: there exists a rectangle $R \subset \R^{2}$ of dimensions $\approx (1 \times \delta^{t/(s + t)})$ such that 
\begin{displaymath} |P \cap R|_{\delta} \gtrapprox \delta^{-s^{2}/(s + t)} \quad \text{and} \quad |\{\ell \in \mathcal{L} : R \subset [\ell]_{C\delta}\}|_{\delta} \gtrapprox \delta^{-t^{2}/(s + t)}. \end{displaymath}
The $\delta$-neighbourhoods of the sets $P \cap R$ and $\{\ell : R \subset [\ell]_{C\delta} \}$ are the "sheaves" in Figure \ref{fig3}. \end{remark}

\subsection{Related work and further problems} Theorem \ref{main} and Corollary \ref{cor3} give a characterisation of the extremal configurations in Fu and Ren's Theorem \ref{t:FuRen} when $s,t \in (0,1]$. It is a natural -- and difficult -- open problem to study the structure of extremal configurations in the original Szemer\'edi-Trotter incidence bound \eqref{st}. Of course, any answers (and methods) in this problem will be completely different from the one provided by Theorem \ref{main}: for example, if $P \times \mathcal{L}$ is a $(0,1)$-clique of points and lines, then $\min\{|\mathcal{L}|,|P|\} = 1$. For recent work on this discrete variant of the problem, see the papers of Solymosi \cite{MR2272219}, Sheffer-Silier, \cite{sheffer2022structural}, and Katz-Silier \cite{katz2023structure}.

In the $\delta$-discretised setting, we are not aware of previous structural results analogous to Theorem \ref{main}. On the other hand, Theorem \ref{main} is far from exhaustive. For example, it only covers the range $s,t \in (0,1]$ of the Fu-Ren incidence theorem. The reason is that the known sharpness examples in other ranges of $s,t$ have rather different structure than the "unions of cliques" shown in Figure \ref{fig3}. We are not even sure what to expect if $\max\{s,t\} > 1$, and certainly the required proof techniques would be different from ours.

Another further direction is to relax or change the non-concentration conditions we impose in Theorem \ref{main}. This will typically change the sharp upper bounds for $|\mathcal{I}(P,\mathcal{L})|_{\delta}$, and therefore the problem of characterising the extremal configurations. However, this is not always the case. For example, if $P \subset [0,1]^{2}$ is a Katz-Tao $(\delta,1)$-set, and $\mathcal{L} \subset \mathcal{A}(2)$ is \textbf{any} set of lines with $|\mathcal{L}|_{\delta} \leq \delta^{-1}$, then $|\mathcal{I}(P,\mathcal{L})|_{\delta} \lesssim \delta^{-3/2}$. This folklore result (see e.g. \cite[Proposition 2.13]{OS23}) matches Fu and Ren's bound in the case $s = t = 1$, and the Katz-Tao $(\delta,1)$-set condition on $\mathcal{L}$ is not needed. So, the following question makes sense: assume that $P \subset [0,1]^{2}$ is a Katz-Tao $(\delta,1)$-set, and $\mathcal{L} \subset \mathcal{A}(2)$ satisfies $|\mathcal{L}|_{\delta} \leq \delta^{-1}$. If $|\mathcal{I}(P,\mathcal{L})|_{\delta} \gtrapprox \delta^{-3/2}$, does the conclusion of Theorem \ref{main} (in the case $s = t = 1$) continue to hold? Our proof heavily relies on the Katz-Tao $(\delta,1)$-set properties of both $P$ and $\mathcal{L}$.

Finally, we refer the reader to further recent advances in the active area of estimating $\delta$-discretised incidences between points and lines in $\R^{d}$: \cite{MR4615465,MR4447307,GSW,MR2538607,2023arXiv231114481O,2023arXiv230409464P,2023arXiv230904097R,2023arXiv230808819R,ShmerkinWang22,MR3034673,2022arXiv221009581W}.

\subsection{Outline of the paper} In Section \ref{section 2} we gather preliminary results required to prove Theorem \ref{main}. The main technical result in that section is Proposition \ref{prop2} which may have some independent interest to experts.

The proof of Theorem \ref{main} occupies Section \ref{section 3}. There is a substantial difference between the complexity of the proofs when $s = t$ (harder) and $s \neq t$ (easier). For the case $s = t$, we need the non-trivial \cite[Theorem 5.61]{2023arXiv230110199O}, repeated as Theorem \ref{thm:Furstenberg-minimal-non-concentration}. This is a quantitative \emph{Furstenberg set estimate}, although not the sharp one from \cite{2023arXiv230808819R}. This auxiliary result is not required in the case $s \neq t$. It might have been possible to combine the cases $s = t$ and $s \neq t$, but we decided to separate them for clarity. Where the details are very similar, we give all of them in the harder case $s = t$, and a sketch when $s \neq t$. Regarding the cases $s \neq t$, we only give a (fairly) detailed argument for $s < t$, and then infer the cases $s > t$ by point-line duality (see Section \ref{s 3.3} for the details).

Finally, Section \ref{s4} contains the proofs of Corollary \ref{cor3} and Proposition \ref{prop4}.
		
\section{Preliminaries}\label{section 2}

\subsection{Notations and $(\delta,s)$-sets}
We adopt the standard notations $\lesssim$, $\gtrsim$, $\sim$. For example, $A\lesssim B$ means $A\leq CB$ for some constant $C>0$, while $A\lesssim_rB$ stands for $A\leq C(r)B$ for a positive function $C(r)$. We will denote $A \lessapprox_\delta B$, $A \gtrapprox_\delta B$, $A \approx_\delta B$ or $A\approx B$ to hide slowly growing functions of $\delta$ such as $\log(1/\delta)$ and $\delta^{-\epsilon}$. The precise meaning of the $\lessapprox$ notation will always be explained separately.

For $\delta \in 2^{-\N}$, dyadic $\delta$-cubes in $\R^{d}$ are denoted $\mathcal{D}_{\delta}(\R^{d})$. Elements of $\mathcal{D}_{\delta}(\R^{d})$ are typically denoted with letters $p,q$. For $P \subset \R^{d}$, we write $\mathcal{D}_{\delta}(P) := \{p \in \mathcal{D}_{\delta}(\R^{d}) : P \cap p \neq \emptyset\}$.

In addition to the Katz-Tao $(\delta,s)$-set condition (Definition \ref{def:katzTaoSet}), also the following slightly different non-concentration property will be needed in the paper:

\begin{definition}[$(\delta,s,C)$-set]\label{def:deltaSSet}
	For $\delta\in (0,1]$, $s\in[0,d]$ and $C>0$, a nonempty bounded set $P \subset \R^{d}$ is called a $(\delta,s,C)$-set if
	\begin{equation}\label{def 2}
		|P\cap B(x,r)|_\delta\leq Cr^s |P|_\delta,~~\forall x\in\mathbb R^d,~r\in[\delta,1].
	\end{equation}
	A family $\mathcal{P} \subset \mathcal{D}_{\delta}(\R^{d})$ is called a $(\delta,s,C)$-set if $\cup \mathcal{P} \subset \R^{d}$ is a $(\delta,s,C)$-set.
\end{definition}
Since both Definitions \ref{def:katzTaoSet} and \ref{def:deltaSSet} will be used in the paper, we will always be careful and explicit in either including the words "Katz-Tao", or omitting them.

\subsection{Point-line duality and dyadic tubes} 

\begin{definition}\label{line map} Let $D \colon \R^{2} \to \mathcal{A}(2)$ be the \emph{point-line duality map} sending $(a,b)$ to a corresponding line in $\R^2$, defined by
	\[D(a,b):= \ell_{a,b} := \{(x,y)\in\R^2: y=ax+b\} \in \mathcal{A}(2).\]
	 \end{definition}

The following useful lemma follows by chasing the definitions:

\begin{lemma}\label{lemma2} The map $D \colon ([-1,1] \times \R,|\cdot|) \to (\mathcal{A}(2),d_{\mathcal{A}(2)})$ is bi-Lipschitz. \end{lemma}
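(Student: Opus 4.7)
The plan is to prove bi-Lipschitzness by unpacking $d_{\mathcal{A}(2)}$ under the explicit parameterization $D(a,b) = \ell_{a,b}$. First I would write
\[\ell_{a,b} = L_a + (0,b), \qquad L_a := \spa\{(1,a)\},\]
so that $L_a$ is the $1$-dimensional subspace parallel to $\ell_{a,b}$, and $(0,b)\in\ell_{a,b}$ serves as a canonical translation representative. Taking this representative in the definition of $d_{\mathcal{A}(2)}$ gives
\[d_{\mathcal{A}(2)}(D(a,b),D(a',b')) = \|\pi_{L_a}-\pi_{L_{a'}}\| + |b-b'|.\]
Thus the claim reduces to the comparison $\|\pi_{L_a}-\pi_{L_{a'}}\| \sim |a-a'|$ uniformly for $a,a'\in[-1,1]$, since the sum $|a-a'|+|b-b'|$ is equivalent to the Euclidean norm $|(a,b)-(a',b')|$.

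For this projection estimate I would invoke the elementary identity $\|\pi_{L_a}-\pi_{L_{a'}}\| = \sin\theta(a,a')$, where $\theta(a,a') := |\arctan a - \arctan a'|$ is the angle between $L_a$ and $L_{a'}$. A short matrix computation (working in coordinates where $L_a$ is the $x$-axis and $L_{a'}$ is rotated by $\theta$) shows $\pi_{L_a}-\pi_{L_{a'}} = \sin\theta \cdot R$, where $R$ is symmetric with trace $0$ and determinant $-1$, hence eigenvalues $\pm 1$; the operator norm therefore equals $\sin\theta$. Since $\arctan$ has derivative $(1+t^2)^{-1}\in[\tfrac12,1]$ on $[-1,1]$, we get $\theta(a,a')\sim |a-a'|$. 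The maximum such angle is $\pi/2$ (attained at $a=1$, $a'=-1$), and on $[0,\pi/2]$ one has $\sin\theta \in [2\theta/\pi,\theta]$. Chaining these comparabilities produces $\|\pi_{L_a}-\pi_{L_{a'}}\| \sim |a-a'|$.

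Putting the two estimates together yields
\[d_{\mathcal{A}(2)}(D(a,b),D(a',b')) \sim |a-a'| + |b-b'| \sim |(a,b)-(a',b')|,\]
with absolute constants, which is the bi-Lipschitz property. I do not anticipate a serious obstacle, consistent with the paper's promise that the proof follows ``by chasing the definitions.'' The only mildly subtle point is the implicit choice of translation representative in the formula for $d_{\mathcal{A}(2)}$: the $y$-intercept convention used above (where the translation vector of $\ell_{a,b}$ is taken to be $(0,b)$) reduces the translation component of the metric to $|b-b'|$ directly and is what makes the argument immediate.
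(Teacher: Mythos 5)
Your proof turns on a single choice that is not actually forced by the paper's definition of $d_{\mathcal{A}(2)}$, and the whole argument stands or falls with it. The paper writes $d_{\mathcal{A}(2)}(\ell_{1},\ell_{2}) = \|\pi_{L_{1}} - \pi_{L_{2}}\| + |a_{1} - a_{2}|$ ``whenever $\ell_{j} = L_{j} + a_{j}$'', and the standard way to make this well-defined on all of $\mathcal{A}(2)$ (including vertical lines) is to take $a_{j}$ to be the orthogonal foot, i.e.\ the unique point of $\ell_{j}$ with $a_{j} \perp L_{j}$. You instead take $a_{j} := (0,b_{j})$, the $y$-intercept, which is only the orthogonal foot when $a = 0$. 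With the orthogonal-foot convention, $a^{\perp}(a,b) = \bigl(\tfrac{-ab}{1+a^{2}},\tfrac{b}{1+a^{2}}\bigr)$, and your identity
\[
d_{\mathcal{A}(2)}(D(a,b),D(a',b')) = \|\pi_{L_a}-\pi_{L_{a'}}\| + |b-b'|
\]
is simply false: for example $|a^{\perp}(0,B) - a^{\perp}(\epsilon,B)| \sim \epsilon B$, which is unbounded in $B$ even though $|b - b'| = 0$. In fact this example shows that, under the orthogonal-foot convention, the map $D$ is \emph{not} bi-Lipschitz on $[-1,1]\times\R$ at all: $d_{\mathcal{A}(2)}(D(0,B),D(\epsilon,B)) \gtrsim \epsilon B$ while $|(0,B)-(\epsilon,B)| = \epsilon$. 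The lemma is then only true on $[-1,1]\times[-C,C]$ for fixed $C$, which is all the paper ever uses (the lines involved pass through $[0,1]^{2}$), but that restriction requires an argument you have not given: the partial derivative $\partial_{a}a^{\perp}$ has norm $\sim |b|$, so comparing $|a^{\perp}(a,b)-a^{\perp}(a',b')|$ with $|b-b'|$ uses both the angular term and the boundedness of $b$.

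You do flag this in your closing paragraph, but you treat the $y$-intercept convention as a harmless ``implicit choice'' that makes the argument immediate. It is not harmless: with the canonical orthogonal-foot reading of $\ell_{j} = L_{j} + a_{j}$, your key equation for $d_{\mathcal{A}(2)}$ is wrong, and the statement you are trying to prove requires a bounded domain. To repair the proof you should either (i) record explicitly that the metric is taken with the $y$-intercept representative, in which case your computation is complete, or (ii) work with the orthogonal foot, restrict to $b$ in a bounded interval (which suffices for every application of the lemma in the paper), and verify the two-sided comparison $\|\pi_{L_a}-\pi_{L_{a'}}\| + |a^{\perp}(a,b) - a^{\perp}(a',b')| \sim |a-a'| + |b-b'|$ by hand. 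The angular estimate $\|\pi_{L_a}-\pi_{L_{a'}}\| = |\sin\theta(a,a')| \sim |a-a'|$ for $a,a' \in [-1,1]$, and the trace-zero symmetric-matrix computation behind it, are correct and can be kept as is.
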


\begin{definition}[Dyadic $\delta$-tubes]\label{def:dyadicTube} Let $\delta \in 2^{-\N}$ and 
\begin{displaymath} Q = [a_{0},a_{0} + \delta) \times [b_{0},b_{0} + \delta)  \in \mathcal{D}_{\delta}([-1,1] \times \R). \end{displaymath}
The union of lines $T := \cup\{D(a,b) : (a,b) \in Q\} \subset \R^{2}$ is called a \emph{dyadic $\delta$-tube}. The \emph{slope of $T$} is defined to be $\sigma(T) := a_{0}$.  The family of dyadic $\delta$-tubes in $\R^{2}$ is denoted $\mathcal{T}^{\delta}$. 

If $\mathcal{L} \subset \mathcal{A}(2)$, we denote $\mathcal{T}^{\delta}(\mathcal{L})$ the family of dyadic tubes which contain at least one line from $\mathcal{L}$. Whenever $\mathcal{L} \subset D([-1,1) \times \R)$, the family $\mathcal{T}^{\delta}(\mathcal{L})$ is a cover of $\mathcal{L}$.  \end{definition}

By an abuse of notation and terminology, we sometimes view dyadic $\delta$-tubes as subsets of $\mathcal{A}(2)$. In fact, we already did so in the last sentence of Definition \ref{def:dyadicTube}. 

We introduce notation for "dyadic covers" of sets $\mathcal{I} \subset \R^{2} \times \mathcal{A}(2)$:
\begin{displaymath} \mathcal{D}_{\delta}(\mathcal{I}) := \{(p,T) \in \mathcal{D}_{\delta} \times \mathcal{T}^{\delta} : x \in p \text{ and } \ell \subset T \text{ for some } (x,\ell) \in \mathcal{I}\}. \end{displaymath}
To be accurate, the elements of $\mathcal{D}_{\delta}(\mathcal{I})$ only cover $\mathcal{I}$ when the $\mathcal{A}(2)$-component of $\mathcal{I}$ consists of non-vertical lines. We will only use this notation when $\mathcal{I} \subset \R^{2} \times D([-1,1) \times \R)$.

\begin{lemma}\label{lemma3} Let $P \subset \R^{2}$, and let $\mathcal{L} \subset D([-1,1) \times \R) \subset \mathcal{A}(2)$. Then,
\begin{equation}\label{form48} |\mathcal{I}(P,\mathcal{L})|_{\delta} \sim |\mathcal{D}_{\delta}(\mathcal{I}(P,\mathcal{L}))|. \end{equation}
\end{lemma}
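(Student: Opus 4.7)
The plan is to prove the comparability \eqref{form48} by establishing both inequalities, each being a routine consequence of the bi-Lipschitz property in Lemma \ref{lemma2}.

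For the upper bound $|\mathcal{I}(P,\mathcal{L})|_{\delta} \lesssim |\mathcal{D}_{\delta}(\mathcal{I}(P,\mathcal{L}))|$, I would argue that every pair $(p,T) \in \mathcal{D}_{\delta}(\mathcal{I})$ covers a subset of $\mathcal{I}$ with diameter $O(\delta)$ in the product metric $d$. Indeed, if $(x,\ell), (x',\ell') \in \mathcal{I}$ are such that $x,x' \in p$ and $\ell, \ell' \subset T$, then $|x - x'| \leq \sqrt{2}\delta$ trivially. Writing $\ell = D(a,b)$ and $\ell' = D(a',b')$, Definition \ref{def:dyadicTube} ensures that $(a,b), (a',b')$ lie in a common dyadic $\delta$-cube $Q \subset [-1,1) \times \R$, whence $|(a,b) - (a',b')| \lesssim \delta$. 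By Lemma \ref{lemma2}, $d_{\mathcal{A}(2)}(\ell,\ell') \lesssim \delta$. Therefore the piece of $\mathcal{I}$ associated to $(p,T)$ fits inside a single ball of radius $O(\delta)$ in the product metric, and summing over $\mathcal{D}_{\delta}(\mathcal{I})$ gives an $O(\delta)$-cover of $\mathcal{I}$ of the claimed cardinality.

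For the lower bound $|\mathcal{D}_{\delta}(\mathcal{I}(P,\mathcal{L}))| \lesssim |\mathcal{I}(P,\mathcal{L})|_{\delta}$, I would start from an optimal $\delta$-cover of $\mathcal{I}$ by balls $B_{1},\ldots,B_{N}$ of radius $\delta$ in the metric $d$, with $N = |\mathcal{I}|_{\delta}$. It suffices to show that each $B_{j}$ generates only $O(1)$ elements of $\mathcal{D}_{\delta}(\mathcal{I})$. If $(x,\ell),(x',\ell') \in B_{j} \cap \mathcal{I}$, then $|x - x'| \leq 2\delta$, so $x$ and $x'$ lie in a bounded number of adjacent dyadic $\delta$-cubes in $\mathcal{D}_{\delta}(\R^{2})$. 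Similarly $d_{\mathcal{A}(2)}(\ell,\ell') \leq 2\delta$, and applying the reverse direction of Lemma \ref{lemma2} to the dual parameters $(a,b),(a',b') \in [-1,1) \times \R$ yields $|(a,b)-(a',b')| \lesssim \delta$; hence these parameters are contained in $O(1)$ adjacent dyadic cubes of $[-1,1) \times \R$, so $\ell$ and $\ell'$ are each contained in $O(1)$ dyadic $\delta$-tubes of $\mathcal{T}^{\delta}$. Consequently $B_{j}$ contributes at most $O(1)$ pairs $(p,T) \in \mathcal{D}_{\delta}(\mathcal{I})$.

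I do not anticipate any real obstacle: the only slight care needed is that a single line $\ell$ may be contained in more than one dyadic $\delta$-tube when its dual parameter lies on the boundary between adjacent dyadic cubes, but this multiplicity is bounded by an absolute constant and only affects the implicit constants in $\sim$. The hypothesis $\mathcal{L} \subset D([-1,1) \times \R)$ ensures that the dual parameters live in the strip where Lemma \ref{lemma2} applies, which is exactly what makes the comparison between the Euclidean $\delta$-neighbourhood of the parameter $(a,b)$ and the $d_{\mathcal{A}(2)}$-ball around $\ell$ work uniformly.
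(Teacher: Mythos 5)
Your proof is correct and proceeds by essentially the same mechanism as the paper's: both directions hinge on the bi-Lipschitz correspondence from Lemma~\ref{lemma2}, which makes each dyadic pair $(p,T)$ comparable to an $O(\delta)$-ball in the product metric $d$, and vice versa. The only cosmetic difference is that the paper argues via maximal separated (packing-type) subsets in both directions, whereas you argue via covers in both directions; the two are interchangeable here and entail the same bookkeeping, so this is not a genuinely different route.
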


\begin{proof} We start with the inequality "$\lesssim$". Let $(x_{1},\ell_{1}),\ldots,(x_{n},\ell_{n}) \in \mathcal{I}(P,\mathcal{L})$ be a maximal $(11\delta)$-separated set. For each $1 \leq j \leq n$, pick $(p_{j},T_{j}) \in \mathcal{D}_{\delta} \times \mathcal{T}^{\delta}$ with $x_{j} \in p_{j}$ and $\ell_{j} \subset T_{j}$. Then $(p_{j},T_{j}) \in \mathcal{D}_{\delta}(\mathcal{I}(P,\mathcal{L}))$, since $(x_{j},\ell_{j}) \in \mathcal{I}(P,\mathcal{L})$. Furthermore, the map $(x_{j},\ell_{j}) \mapsto (p_{j},T_{j})$ is injective, because if $(p,T) \in \mathcal{D}_{\delta} \times \mathcal{T}^{\delta}$ is fixed, then the set $\{(x,\ell) : x \in p \text{ and } \ell \subset T\}$ is contained in a $d_{\mathcal{A}(2)}$-ball of radius $5\delta$. Therefore, $|\mathcal{I}(P,\mathcal{L})|_{\delta} \sim n \leq  |\mathcal{D}_{\delta}(\mathcal{I}(P,\mathcal{L}))|$.

We then prove the inequality "$\gtrsim$". Write $\mathcal{D}_{\delta}(\mathcal{I}(P,\mathcal{L})) = \{(p_{1},T_{1}),\ldots,(p_{n},T_{n})\}$, and $T_{j} = \cup D(q_{j})$, where $q_{j} \in \mathcal{D}_{\delta}(\R^{2})$. We say that $(p_{i},T_{i})$ and $(p_{j},T_{j})$ are \emph{neighbours} if $\dist(p_{i},p_{j}) \leq C\delta$ and $\dist(q_{i},q_{j}) \leq C\delta$ for a suitable absolute constant $C \geq 1$. Pick any maximal neighbour-free subset $\mathcal{I} \subset \mathcal{D}_{\delta}(\mathcal{I}(P,\mathcal{L}))$. It is easy to check that $|\mathcal{I}| \sim_{C} n$. 

We claim that $|\mathcal{I}(P,\mathcal{L})|_{\delta} \geq |\mathcal{I}|$, which will complete the proof. To see this, pick $(p,T) \in \mathcal{I} \subset \mathcal{D}_{\delta}(\mathcal{I}(P,\mathcal{L}))$. Then, by definition there exist $x_{p} \in p$ and $\ell_{T} \subset T$ such that $(x_{p},\ell_{T}) \in \mathcal{I}(P,\mathcal{L})$, in particular $\ell_{T} \in \mathcal{L} \subset D([-1,1) \times \R)$.

 Now, it suffices to note that the pairs $(x_{p},\ell_{T}) \in \mathcal{I}(P,\mathcal{L})$ obtained this way are $\delta$-separated. If $(p,T),(p',T') \in \mathcal{I}$ are distinct, then either $\dist(p,p') \geq C\delta$ or $\dist(q,q') \geq C\delta$. In the former case $|x_{p} - x_{p'}| \geq 10\delta$. In the latter case $d_{\mathcal{A}(2)}(\ell_{T},\ell_{T'}) \gtrsim \dist(q,q') \geq C\delta$ by the bi-Lipschitz property of $D$. Therefore $d_{\mathcal{A}(2)}(\ell_{T},\ell_{T'}) \geq \delta$ is $C \geq 1$ is large enough. \end{proof}

\subsection{Incidence bounds}

The following result is a version of Fu and Ren's Theorem \ref{t:FuRen} where the dependence on the non-concentration constants has been quantified. It is also due to Fu-Ren, see \cite[Theorem 3.1 and Theorem 3.2]{fu2022incidence}.

\begin{thm}\label{combinatorial bound}
	Let $0\leq s, t\leq 1$ and $K_{P}, K_{\mathcal{L}} \geq 1$. Assume $P \subset [0,1]^{2}$ is a Katz-Tao $(\delta,s, K_{P})$-set and $\mathcal{L} \subset \mathcal{A}(2)$ is a Katz-Tao $(\delta,t, K_{\mathcal{L}})$-set. Then, 
	\begin{equation}\label{form56} |\mathcal{I}(P,\mathcal{L})|_{\delta}^{s+t} \lesssim_{\epsilon} \delta^{-st(1+\epsilon)} K_{P}^t K_{\mathcal{L}}^s|P|_{\delta}^s |\mathcal{L}|_{\delta}^t, \qquad \epsilon > 0. \end{equation}
\end{thm}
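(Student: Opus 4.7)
The plan is to prove Theorem \ref{combinatorial bound} by tracking the dependence on $K_P, K_{\mathcal{L}}, |P|_\delta, |\mathcal{L}|_\delta$ through Fu and Ren's proof of Theorem \ref{t:FuRen}. Note first that \eqref{form56} is the natural quantified form of Theorem \ref{t:FuRen}: substituting the worst-case Katz-Tao cardinalities $|P|_\delta \leq K_P \delta^{-s}$ and $|\mathcal{L}|_\delta \leq K_{\mathcal{L}} \delta^{-t}$ into \eqref{form56} recovers $|\mathcal{I}|_\delta \lesssim_{\epsilon} \delta^{-f(s,t)-\epsilon'} K_P K_{\mathcal{L}}$, which is Theorem \ref{t:FuRen} with explicit linear dependence on the Katz-Tao constants. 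Conversely, \eqref{form56} is strictly stronger whenever $P$ or $\mathcal{L}$ is not cardinality-saturated.

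The first concrete step is to apply Lemma \ref{lemma3} to reduce \eqref{form56} to an estimate on $|\mathcal{D}_\delta(\mathcal{I}(P,\mathcal{L}))|$, so that we may work with dyadic $\delta$-squares and dyadic $\delta$-tubes. Next I would uniformize the incidences by dyadic pigeonholing: find $\mu \in 2^{\Z}$ and a sub-family $\mathcal{L}_0 \subset \mathcal{L}$ with $|\mathcal{L}_0|_\delta \gtrapprox |\mathcal{L}|_\delta$ such that $|\ell \cap P|_\delta \in [\mu, 2\mu]$ for every $\ell \in \mathcal{L}_0$, yielding $|\mathcal{I}|_\delta \approx \mu|\mathcal{L}_0|_\delta$. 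The Katz-Tao condition on $P$ gives a preliminary bound on $\mu$ via a covering of the $\delta$-tube around $\ell$ by $r$-balls at an intermediate scale $r \in [\delta,1]$, which sums to $\mu \lesssim K_P r^{s-1} \delta^{-s}$. At this point I would recurse at the coarser scale $r$, using the induction hypothesis for \eqref{form56} on the sub-configurations supported in each dyadic $r$-cell, where $\mathcal{L}$ has an "effective" Katz-Tao constant $K_{\mathcal{L}} (r/\delta)^t$. Balancing $\mu$ and $r$ produces the target exponents on the right-hand side of \eqref{form56}.

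The main obstacle is the arithmetic of this parameter-balancing, which is what produces the asymmetric combination $\delta^{-st(1+\epsilon)} K_P^t K_{\mathcal{L}}^s$. The swapped exponents $K_P^t$ (rather than $K_P^s$) and $K_{\mathcal{L}}^s$ (rather than $K_{\mathcal{L}}^t$) arise because each Katz-Tao condition is used exactly once per opposite dimension across the inductive steps. A shortcut I would attempt first is a decomposition approach: partition $P$ into $\lesssim K_P$ Katz-Tao $(\delta, s, O(1))$-pieces, and analogously $\mathcal{L}$; apply Theorem \ref{t:FuRen} pairwise with unit Katz-Tao constants; and interpolate the contributions via H\"older's inequality with exponents $s/(s+t)$ and $t/(s+t)$. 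However, producing a genuine Katz-Tao decomposition (not merely a $(\delta, s)$-set decomposition in the sense of Definition \ref{def:deltaSSet}) with exactly this number of pieces is itself delicate and may require a further multi-scale analysis; moreover the H\"older step needs additional uniformization of the cardinalities $|P_i|_\delta, |\mathcal{L}_j|_\delta$ across pieces. For these reasons, the direct inductive route of Fu and Ren \cite[Theorems 3.1, 3.2]{fu2022incidence} is likely the cleanest path.
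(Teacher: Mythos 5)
The paper does not reprove the Fu--Ren estimate; it cites \cite[Theorems 3.1 and 3.2]{fu2022incidence}, which \emph{already} contain the explicit $K_P^t K_{\mathcal{L}}^s$ dependence stated for ball--tube incidences $\overline{\mathcal{I}}(\mathcal{B},\mathcal{T})$. The entire content of the paper's proof of Theorem~\ref{combinatorial bound} is the translation argument in Remark~\ref{rem2}: take a maximal $(3\delta)$-separated set of incidences $(x_j,\ell_j) \in \mathcal{I}(P,\mathcal{L})$, pass to maximal $\delta$-separated subsets $P' \subset \{x_j\}$ and $\mathcal{L}' \subset \{\ell_j\}$, inflate to $(10\delta)$-balls and $(10\delta)$-tubes, verify that these remain Katz--Tao with constants $O(K_P)$ and $O(K_{\mathcal{L}})$, and check injectivity of the map $(x_j,\ell_j) \mapsto (x',\ell')$ so that $|\mathcal{I}(P,\mathcal{L})|_\delta \lesssim |\overline{\mathcal{I}}(\mathcal{B},\mathcal{T})|$.

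Your proposal instead aims to re-derive the Fu--Ren induction from scratch while ``tracking'' the constant dependence, as though the original theorem were only qualitative in $K_P, K_{\mathcal{L}}$. This misreads the source: there is nothing to track, since \cite[Theorems 3.1, 3.2]{fu2022incidence} are stated and proved in exactly the quantified form \eqref{form56} (modulo the ball/tube vs.\ covering-number metric issue). Consequently, the only step that actually needs work --- the metric translation between $|\overline{\mathcal{I}}(\mathcal{B},\mathcal{T})|$ and $|\mathcal{I}(P,\mathcal{L})|_\delta$ --- is absent from your write-up. Your appeal to Lemma~\ref{lemma3} goes in the right direction (it converts $|\mathcal{I}(P,\mathcal{L})|_\delta$ to a dyadic count $|\mathcal{D}_\delta(\mathcal{I}(P,\mathcal{L}))|$), but it does not by itself connect to the ball--tube incidence count used in \cite{fu2022incidence}; one still needs the separation/inflation argument of Remark~\ref{rem2} to preserve the Katz--Tao constants and to establish the injection that bounds the covering number by the incidence count. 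Finally, the inductive sketch you outline --- pigeonhole $\mu$, cover a tube at scale $r$, recurse with an inflated tube constant $K_{\mathcal{L}}(r/\delta)^t$, balance --- is too thin to verify: you never specify the induction hypothesis, the closing of the recursion, or why the $\epsilon$-loss accumulates to only $\delta^{-st\epsilon}$, and the alternative ``decompose into $\lesssim K_P$ genuine Katz--Tao pieces'' strategy is flagged by you yourself as not realized. As written, the proposal neither reproduces the paper's (short, citation-based) argument nor supplies a complete alternative.
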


\begin{remark}\label{rem2} The original formulation \cite[Theorem 3.1 and Theorem 3.2]{fu2022incidence} of Theorem \ref{combinatorial bound} concerned incidences of the form $\overline{\mathcal{I}}(\mathcal{B},\mathcal{T}) = \{(B,T) \in \mathcal{B} \times \mathcal{T} : B \cap T \neq \emptyset\}$, where $\mathcal{B}$ is a family of $\delta$-discs and $\mathcal{T}$ is a family of $\delta$-tubes. Let us clarify why the original formulation implies Theorem \ref{combinatorial bound} as stated. (We give the full details to make sure that the original dependence on the constants $K_{P}$ and $K_{\mathcal{L}}$ can be maintained.)

Let $P,\mathcal{L}$ be as in Theorem \ref{combinatorial bound}, pick a maximal $(3\delta)$-separated set $(x_{1},\ell_{1}),\ldots,(x_{n},\ell_{n}) \in \mathcal{I}(P,\mathcal{L})$ in the $d$-metric of $\R^{2} \times \mathcal{A}(2)$. Thus, $|\mathcal{I}(P,\mathcal{L})|_{\delta} \sim n$, and $x_{j} \in \ell_{j}$ for all $1 \leq j \leq n$. Let $P' \subset \{x_{1},\ldots,x_{n}\}$ and $\mathcal{L}' \subset \{\ell_{1},\ldots,\ell_{n}\}$ be maximal $\delta$-separated sets, and consider the families of $(10\delta)$-balls and $(10\delta)$-tubes
\begin{displaymath} \mathcal{B} := \{B(x',10\delta) : x' \in P'\} \quad \text{and} \quad \mathcal{T} := \{[\ell]_{10\delta} : \ell \in \mathcal{L}'\}, \end{displaymath}
where $[\ell]_{r}$ is the $r$-neighbourhood of $\ell$. Then $\mathcal{B}$ is a Katz-Tao $(10\delta,s,O(K_{P}))$-set and $\mathcal{T}$ is a Katz-Tao $(10\delta,t,O(K_{\mathcal{L}}))$-set in the terminology of \cite{fu2022incidence}. Further, we claim that $n \leq |\overline{\mathcal{I}}(\mathcal{B},\mathcal{T})|$. To see this, fix $1 \leq j \leq n$. By the definitions of $P',\mathcal{L}'$, there exist $x' \in P'$, $\ell' \in \mathcal{L}'$ such that $|x_{j} - x'| \leq \delta$ and $|\ell_{j} - \ell'| \leq \delta$. Since $x_{j} \in \ell_{j}$,
\begin{displaymath} B(x',10\delta) \cap [\ell']_{10\delta} \neq \emptyset. \end{displaymath}
Moreover, the map $(x_{j},\ell_{j}) \mapsto (x',\ell')$ is injective: two pairs $(x_{i},\ell_{i})$ and $(x_{j},\ell_{j})$ corresponding to the same pair $(x',\ell')$ would satisfy $|x_{i} - x_{j}| \leq 2\delta$ and $|\ell_{i} - \ell_{j}| \leq 2\delta$, and therefore $d((x_{i},\ell_{i}),(x_{j},\ell_{j})) \leq 2\delta$, contrary to the $(3\delta)$-separation. This proves the inequality $n \leq |\overline{\mathcal{I}}(\mathcal{B},\mathcal{T})|$, and finally \eqref{form56} follows from the original formulation of \cite{fu2022incidence}. \end{remark}

Besides Theorem \ref{combinatorial bound}, a main tool in the proof of Theorem \ref{main} is \cite[Theorem 5.61]{2023arXiv230110199O} stated below as Theorem \ref{thm:Furstenberg-minimal-non-concentration}. To be accurate, the statement below is the "dual" version of \cite[Theorem 5.61]{2023arXiv230110199O} which is more convenient for our application. Another small difference is that Theorem \ref{thm:Furstenberg-minimal-non-concentration} is stated for ("ordinary") $\delta$-tubes, whereas \cite[Theorem 5.61]{2023arXiv230110199O} is formulated in terms of dyadic $\delta$-tubes. The introduction of dyadic $\delta$-tubes in \cite{2023arXiv230110199O} brings technical convenience in the proof, but the two versions are \emph{a posteriori} easily seen to be equivalent. In the statement, a $\delta$-tube is any rectangle of dimensions $\delta \times 1$, and two $\delta$-tubes $T,T'$ are called \emph{distinct} if $\mathrm{Leb}(T \cap T') \leq \tfrac{1}{2}\mathrm{Leb}(T)$.

\begin{thm}\label{thm:Furstenberg-minimal-non-concentration}
	Fix $\eta\in (0,1]$, $t\in (0,2)$, $u \in (0,\min\{t,2 - t\}]$, and $0 \leq \alpha < \eta u/4$. There exists $\epsilon=\epsilon(\eta,t,u) > 0$ and $\delta_{0} = \delta_{0}(\alpha,\eta,t,u) > 0$ such that the following holds for all $\delta \in (0,\delta_{0}]$.
	
	Let $\mathcal{T}$ be a family of distinct $\delta$-tubes with $|\mathcal{T}| = \delta^{-t}$ and satisfying the following non-concentration condition at the single scale $\rho := \delta|\mathcal{T}|^{1/2}$:
	\begin{equation}\label{form10} |\{T \in \mathcal{T} : T \subset \mathbf{T}\}| \leq \delta^{u} |\mathcal{T}|, \end{equation}
	where $\mathbf{T} \subset \R^{2}$ is an arbitrary $(\rho \times 2)$-rectangle. Let $N \geq 1$. For every $T \in \mathcal{T}$, assume that there exists a $(\delta,\eta,\delta^{-\epsilon})$-set $\mathcal{P}_{T} \subset \mathcal{D}_{\delta}([0,1)^{2})$ satisfying $|\mathcal{P}_{T}| \geq N$, and with the property that every square in $\mathcal{P}_{T}$ intersects $T$. Then,
	\begin{displaymath} 
		\left| \bigcup_{T \in \mathcal{T}} \mathcal{P}_{T} \right| \geq N \cdot |\mathcal{T}|^{1/2} \cdot \delta^{-\alpha}. 
	\end{displaymath}
\end{thm}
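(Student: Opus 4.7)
The plan is to proceed by contradiction: suppose $|P|<N\cdot|\mathcal{T}|^{1/2}\cdot\delta^{-\alpha}$, where $P:=\bigcup_{T\in\mathcal{T}}\mathcal{P}_T$. The total incidence count $I:=\sum_{T\in\mathcal{T}}|\mathcal{P}_T|$ satisfies $I\geq N|\mathcal{T}|$, and dyadic pigeonholing (first on $|\mathcal{P}_T|$, then on the multiplicity $r(p):=\#\{T\in\mathcal{T}^{*}:p\in\mathcal{P}_T\}$) refines to $\mathcal{T}^*\subset\mathcal{T}$ of size $\gtrapprox|\mathcal{T}|$, shadings $\mathcal{P}_T^*\subset\mathcal{P}_T$ of size $\gtrapprox N$, and a popular set $P^*\subset P$ on which $r$ is, up to log factors, constant with common value $\gtrapprox\delta^{\alpha}|\mathcal{T}|^{1/2}$. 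My goal is to force a contradictory upper bound on $r$.

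The argument then pivots on analysing the configuration at the critical scale $\rho=\delta|\mathcal{T}|^{1/2}$ singled out by \eqref{form10}. Forming the family $\mathcal{T}_\rho$ of $\rho$-thickenings of tubes in $\mathcal{T}^*$, the hypothesis \eqref{form10} gives $|\mathcal{T}_\rho|\gtrapprox\delta^{-u}$ and, after further pigeonholing, a Katz-Tao-type structure on $\mathcal{T}_\rho$ via Lemma \ref{lemma2}. The central estimate is a two-scale Cauchy--Schwarz
\begin{equation*}
(N|\mathcal{T}|)^{2}\;\leq\;|P^{*}|\cdot\sum_{T,T'\in\mathcal{T}^{*}}|\mathcal{P}_{T}^{*}\cap\mathcal{P}_{T'}^{*}|,
\end{equation*}
in which I split the right-hand double sum according to whether $T$ and $T'$ share a common $\rho$-thickening $\mathbf{T}\in\mathcal{T}_\rho$. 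The transverse contribution ($T\not\subset\mathbf{T}\supset T'$) is handled by the elementary $\delta^{2}/\mathrm{angle}$-bound on intersections of $\delta$-tubes, combined with the Katz-Tao structure of $\mathcal{T}_\rho$ to sum over direction scales above $\rho$.

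The delicate ``parallel'' contribution, where $T,T'\subset\mathbf{T}$, is the crux: rescaling $\mathbf{T}$ to a unit rectangle converts the $\lessapprox\delta^{u}|\mathcal{T}|$ rescaled $\delta$-tubes inside it to a set of tubes at scale $\delta/\rho$ in a unit box, and the $(\delta,\eta,\delta^{-\epsilon})$-set property on each $\mathcal{P}_T$ is preserved by the rescaling. Applying Fu--Ren's bound (Theorem \ref{combinatorial bound}) at scale $\delta/\rho$ inside each $\mathbf{T}$ to the pair (rescaled $\mathcal{P}_{T}^{*}$, rescaled $\delta$-tubes) and summing over $\mathbf{T}\in\mathcal{T}_\rho$ should yield a saving of order $\delta^{\eta u/4+\epsilon'}$ beyond the trivial bound $N^{2}|\mathcal{T}|$; combined with the transverse contribution, this contradicts the counter-assumption whenever $\alpha<\eta u/4$. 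The hardest step I anticipate is precisely this transfer: extracting from a purely tube-side single-scale condition \eqref{form10} together with the $(\delta,\eta)$-spreading of each $\mathcal{P}_T$ a quantitative incidence saving inside each $\mathbf{T}$ strong enough to produce the sharp exponent $\eta u/4$ appearing in the statement, without losing more than $\delta^{-\epsilon}$ in the pigeonholing steps.
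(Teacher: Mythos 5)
The paper does not prove this statement: Theorem \ref{thm:Furstenberg-minimal-non-concentration} is imported verbatim (up to dualisation and a cosmetic switch from dyadic to ordinary tubes) from \cite[Theorem~5.61]{2023arXiv230110199O}, and the paper explicitly defers to that reference. There is therefore no ``paper's own proof'' to compare your attempt against, and a correct proof would go well beyond what the present paper contains.

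As an attempt at a proof from scratch, your outline has the right ingredients at a high level --- Cauchy--Schwarz on the incidence count, a split at the intermediate scale $\rho=\delta|\mathcal{T}|^{1/2}$, and an incidence bound inside each $\rho$-rectangle --- but the crucial step is left unresolved, and you say as much yourself. More concretely: (1) the rescaling of $\mathbf{T}$ to a unit square is anisotropic (factor $\sim\rho^{-1}$ along the short side, $\sim1$ along the long side), so the $\delta$-squares of $\mathcal{P}_T$ become $(\delta/\rho)\times\delta$ rectangles, and the $(\delta,\eta,\delta^{-\epsilon})$-set property is \emph{not} ``preserved by the rescaling'' as a planar non-concentration condition; only the one-dimensional spreading along the tube direction survives cleanly. (2) The hypothesis \eqref{form10} is a single-scale cardinality bound on the tubes inside $\mathbf{T}$; it does not give a Katz--Tao $(\delta/\rho,\cdot)$-set condition on the rescaled tube family, which is exactly what Theorem \ref{combinatorial bound} requires on the line side --- so Fu--Ren cannot be invoked inside $\mathbf{T}$ without first manufacturing multi-scale non-concentration from a one-scale hypothesis, and that is precisely where the quantitative content of the theorem lies. (3) Even granting all this, in your Cauchy--Schwarz the diagonal $\sum_T|\mathcal{P}_T|^2\sim N^2|\mathcal{T}|$ already forces $N\lesssim|\mathcal{T}|^{1/2}\delta^{\alpha}$ for the argument to close, a reduction you would need to justify separately. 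In short, the ``transfer'' you flag as the hardest step is not sketched at a level where one could check whether it yields the sharp exponent $\eta u/4$, so the proposal should be regarded as an unverified outline rather than a proof.
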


While Theorem \ref{thm:Furstenberg-minimal-non-concentration} is an improvement (enabled by \eqref{form10}) over the classical "$2$-ends" incidence bound, we will also employ the classical bound, recorded below:
\begin{proposition}\label{prop3} Let $\mathcal{T}$ be a family of of dyadic $\delta$-tubes or distinct (ordinary) $\delta$-tubes, $M \geq 1$ and $r > 0$. For every $T \in \mathcal{T}$, assume that there exists a set $\mathcal{P}_{T} \subset \mathcal{D}_{\delta}([0,1)^{2})$ with $|\mathcal{P}_{T}| = M$, with the property that every square in $\mathcal{P}_{T}$ intersects $T$, and $\mathcal{P}_{T}$ satisfies the following \emph{$2$-ends condition}:
	\begin{equation}\label{form11} |\mathcal{P}_{T} \cap B(x,r)| \leq \tfrac{1}{3}M, \qquad x \in \R^{2}. \end{equation}
	Then,
	\begin{equation}\label{form12} 
		\left| \bigcup_{T \in \mathcal{T}} \mathcal{P}_{T} \right| \gtrsim |\mathcal{T}|^{1/2} \cdot M \cdot r^{1/2}. 
	\end{equation}
\end{proposition}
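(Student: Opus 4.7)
The plan is to prove Proposition \ref{prop3} via a classical two-ends double-counting argument. Set $P := \bigcup_{T \in \mathcal{T}} \mathcal{P}_T$, and consider the quantity
\begin{displaymath}
N := |\{(T, p_1, p_2) \in \mathcal{T} \times \mathcal{D}_{\delta} \times \mathcal{D}_{\delta} : p_1, p_2 \in \mathcal{P}_T \text{ and } \dist(p_1, p_2) \geq r\}|,
\end{displaymath}
which I will bound from below using the hypothesis \eqref{form11} and from above using a tube-counting lemma.

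For the lower bound, I fix $T \in \mathcal{T}$ and $p_1 \in \mathcal{P}_T$. Applying the $2$-ends condition \eqref{form11} with $x$ equal to the centre of $p_1$ yields $|\mathcal{P}_T \cap B(p_1,r)| \leq M/3$, so at least $2M/3$ choices of $p_2 \in \mathcal{P}_T$ satisfy $\dist(p_1,p_2) \geq r$. Summing over $p_1 \in \mathcal{P}_T$ and then over $T \in \mathcal{T}$ gives $N \geq \tfrac{2}{3} M^2 |\mathcal{T}|$.

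For the upper bound, I swap the order of summation and rewrite
\begin{displaymath}
N = \sum_{\substack{(p_1, p_2) \in P \times P \\ \dist(p_1, p_2) \geq r}} |\{T \in \mathcal{T} : p_1, p_2 \in \mathcal{P}_T\}|.
\end{displaymath}
The inner cardinality is bounded by the number of (dyadic, resp.\ distinct ordinary) $\delta$-tubes that meet two fixed $\delta$-squares separated by distance $\geq r$. I would establish the elementary geometric fact that this number is $\lesssim 1/r$: in the $(a,b)$-parametrisation of Definition \ref{def:dyadicTube}, the lines meeting both squares correspond to a region in $[-1,1] \times \R$ whose direction coordinate lies in an interval of length $\lesssim \delta/r$ and whose intercept coordinate lies in an interval of length $\lesssim \delta$, producing $\lesssim 1/r$ dyadic $\delta$-squares; for distinct ordinary tubes the same bound follows since distinct tubes correspond to essentially disjoint such parameter squares (up to a bounded overlap). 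This yields $N \lesssim |P|^2/r$.

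Comparing the two bounds gives $M^2|\mathcal{T}| \lesssim |P|^2/r$, i.e.\ $|P| \gtrsim |\mathcal{T}|^{1/2} M r^{1/2}$, which is exactly \eqref{form12}. The only nontrivial ingredient is the tube-counting step bounding the multiplicity of tubes through two distant squares; the rest is purely combinatorial double counting. I expect no obstacles in the execution, but care is needed in the tube-counting step to handle both the dyadic tubes (via Definition \ref{def:dyadicTube}) and the ordinary-tube case uniformly.
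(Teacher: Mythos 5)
Your proposal is correct and follows essentially the same two-ends double-counting strategy as the paper: both bound from below and above the number of triples $(T,p_1,p_2)$ with $p_1,p_2 \in \mathcal{P}_T$ separated by distance $\gtrsim r$, and then use the fact that $\lesssim 1/r$ tubes pass through two $\delta$-squares at distance $\gtrsim r$. The only cosmetic difference is in the lower bound: you apply the $2$-ends condition at the centre of each $p_1 \in \mathcal{P}_T$ to count $\gtrsim M$ distant partners directly (giving $\gtrsim M^2$ distant pairs per tube), whereas the paper first extracts two separated subsets $\mathcal{P}^1_T,\mathcal{P}^2_T \subset \mathcal{P}_T$ of size $\sim M$ each; both yield the same count. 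One pedantic note: your argument gives $\dist(p_1,p_2) \geq r - O(\delta)$ rather than $\geq r$ exactly, so the conclusion should read $\dist(p_1,p_2) \gtrsim r$, as in the paper — harmless since the final estimate is up to constants.
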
 

\begin{proof} 
	According to \eqref{form11}, we may for every $T \in \mathcal{T}$ find two subsets $\mathcal{P}^{1}_{T},\mathcal{P}^{2}_{T} \subset \mathcal{P}_{T}$ such that $|\mathcal{P}_{T}^{j}| \sim M$ for $j \in \{1,2\}$, and $\dist(p,q) \gtrsim r$ for all $(p,q) \in \mathcal{P}^{1}_{T} \times \mathcal{P}^{2}_{T}$. Consequently,
	\begin{displaymath} 
		\sum_{T \in \mathcal{T}} |\mathcal{P}_{T}^{1} \times \mathcal{P}_{T}^{2}| \gtrsim |\mathcal{T}| \cdot M^{2}. 
	\end{displaymath} 
	On the other hand, denoting by "$\mathcal{P}$" the set appearing in \eqref{form12}, we have
	\begin{displaymath} 
		\sum_{T \in \mathcal{T}} |\mathcal{P}_{T}^{1} \times \mathcal{P}_{T}^{2}| \leq \mathop{\sum_{(\mathbf{p},\mathbf{q}) \in \mathcal{P}^{2}}}_{\dist(\mathbf{p},\mathbf{q}) \gtrsim r} |\{T \in \mathcal{T} : \mathbf{p} \cap T \neq \emptyset \neq \mathbf{q} \cap T\}| \lesssim |\mathcal{P}|^{2}/r. 
	\end{displaymath}
	Combining these estimates gives \eqref{form12}. \end{proof}

\subsection{Uniform sets} The items in this section are repeated from \cite[Section 2.3]{2023arXiv230110199O}.
	
\begin{definition}\label{def:uniformity}
Let $n \geq 1$, and let
\begin{displaymath} \delta = \Delta_{n} < \Delta_{n - 1} < \ldots < \Delta_{1} \leq \Delta_{0} = 1 \end{displaymath}
be a sequence of dyadic scales.  We say that a set $P\subset [0,1)^2$ is \emph{$\{\Delta_j\}_{j=1}^n$-uniform} if there is a sequence $\{N_j\}_{j=1}^n$ such that $N_{j} \in 2^{\N}$ and $|P\cap Q|_{\Delta_{j}} = N_j$ for all $j\in \{1,\ldots,n\}$ and all $Q\in\mathcal{D}_{\Delta_{j - 1}}(P)$.
\end{definition}

The following simple but key lemma asserts that one can always find ``dense uniform subsets''. See e.g. \cite[Lemma 3.6]{Sh} for the short proof.
\begin{lemma} \label{l:uniformization}
Let $P\subset [0,1)^{d}$, $m,H \in \N$, and $\delta := 2^{-mH}$. Let also $\Delta_{j} := 2^{-jH}$ for $0 \leq j \leq m$, so in particular $\delta = \Delta_{m}$. Then, there is a $\{\Delta_j\}_{j=1}^{m}$-uniform set $P'\subset P$ such that
\begin{displaymath}
|P'|_\delta \ge  \left(2H \right)^{-m} |P|_\delta.
\end{displaymath}
In particular, if $\epsilon > 0$ and $H^{-1}\log (2H) \leq \epsilon$, then $|P'|_{\delta} \geq \delta^{\epsilon}|P|_{\delta}$.
\end{lemma}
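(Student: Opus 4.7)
My strategy is induction on the number of scales $m$. The base case $m = 0$ is trivial with $P' := P$. For the inductive step I reduce from $m$ scales to $m - 1$ scales by first establishing uniformity at the finest scale $\Delta_m = \delta$, and then invoking the inductive hypothesis at the coarser scales $\Delta_1, \ldots, \Delta_{m-1}$.

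For the finest-scale step, I observe that for each $Q \in \mathcal{D}_{\Delta_{m-1}}(P)$ the integer $|P \cap Q|_\delta$ lies in $\{1, 2, \ldots, 2^{Hd}\}$, since a $\Delta_{m-1}$-cube contains exactly $2^{Hd}$ many $\delta$-subcubes. Hence $\lfloor \log_2 |P \cap Q|_\delta \rfloor$ takes at most $Hd + 1 \leq 2H$ values (for $d = 2$). A dyadic pigeonhole weighted by $\delta$-mass selects a single class $k^\ast$ accounting for $\gtrsim (2H)^{-1}$ of $|P|_\delta$; restricting inside each retained $Q$ to exactly $N_m := 2^{k^\ast}$ of its $\delta$-children (possible since $|P \cap Q|_\delta \geq N_m$ for the retained $Q$'s) produces a subset $P_1 \subset P$ with $|P_1 \cap Q|_\delta = N_m$ for every $Q \in \mathcal{D}_{\Delta_{m-1}}(P_1)$ and $|P_1|_\delta \gtrsim (2H)^{-1}|P|_\delta$.

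Next, I form $P_1^\sharp := \bigcup \mathcal{D}_{\Delta_{m-1}}(P_1)$, the union of $\Delta_{m-1}$-cubes meeting $P_1$, and apply the inductive hypothesis at the $m - 1$ scales $\Delta_1 > \cdots > \Delta_{m-1}$ (with $\Delta_{m-1}$ now playing the role of the finest scale). This yields a $\{\Delta_j\}_{j=1}^{m-1}$-uniform $P'' \subset P_1^\sharp$ with $|P''|_{\Delta_{m-1}} \geq (2H)^{-(m-1)} |P_1|_{\Delta_{m-1}} = (2H)^{-(m-1)} N_m^{-1} |P_1|_\delta$. Setting $P' := P_1 \cap \bigcup P''$: the uniformity of $P_1$ at scale $\delta$ passes to $P'$ inside each $\Delta_{m-1}$-cube of $P''$ (since every such cube meets $P_1$ with exactly $N_m$ $\delta$-descendants), while the uniformity of $P''$ at scales $\Delta_1, \ldots, \Delta_{m-1}$ transfers directly to $P'$ because $P''$ is a union of $\Delta_{m-1}$-cubes and $P'$ and $P''$ therefore have the same $\Delta_j$-covering profile for $j \leq m - 1$. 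Consequently $P'$ is $\{\Delta_j\}_{j=1}^{m}$-uniform and $|P'|_\delta = N_m \cdot |P''|_{\Delta_{m-1}} \geq (2H)^{-m}|P|_\delta$ after absorbing the absolute constants.

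The main obstacle I anticipate is tracking the multiplicative constants: the honest single-step loss is $2(Hd + 1)$ (one factor from the pigeonhole over $Hd + 1$ dyadic bins, one from rounding $|P \cap Q|_\delta$ down to a power of $2$), which for $d = 2$ equals $2(2H + 1)$, slightly worse than the claimed $2H$. This discrepancy is cosmetic and is absorbed either by a sharper weighted pigeonhole (grouping nearby bins) or by replacing the base $2H$ by a slightly larger dimension-dependent constant; in either case the corollary $|P'|_\delta \geq \delta^\epsilon |P|_\delta$ under $H^{-1}\log(2H) \leq \epsilon$ continues to hold, since only the implicit constant hidden by $\log$ is affected.
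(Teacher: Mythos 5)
Your argument is the standard induction-plus-dyadic-pigeonhole proof, which is exactly the route taken by the reference \cite[Lemma 3.6]{Sh} that the paper cites in lieu of a self-contained proof; the structure (uniformize at the finest scale, then induct on the coarser scales) and the pigeonhole over $\lfloor \log_2 |P \cap Q|_\delta \rfloor$ are the same. A small notational point: the inductive hypothesis gives a $\{\Delta_j\}_{j=1}^{m-1}$-uniform $P'' \subset P_1^\sharp$, not necessarily a union of $\Delta_{m-1}$-cubes, so you should take $P' := P_1 \cap \bigcup \mathcal{D}_{\Delta_{m-1}}(P'')$; this has the same $\Delta_j$-covering profile as $P''$ for $j \leq m-1$ and the argument goes through as you describe. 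Your observation about the constant is correct and worth underlining: in dimension $d$ the honest per-scale loss is $2(Hd+1)$, or $2Hd$ after merging the two topmost dyadic bins; the paper's $(2H)^{-m}$ is the $d=1$ constant from \cite{Sh} carried over verbatim. Note, however, that your first proposed fix (``grouping nearby bins'') only improves $2(Hd+1)$ to $2Hd$, so for $d \geq 2$ you really need the second fix, namely replacing the base $2H$ by a $d$-dependent constant such as $2Hd$. As you say, this is harmless in context: the ``in particular'' clause and the downstream Corollary \ref{cor1} only require that for every $\epsilon > 0$ there be $H_0(\epsilon,d)$ so that for $H \geq H_0$ the cumulative loss over $m$ scales is at most $\delta^{\epsilon} = 2^{-mH\epsilon}$, and $H^{-1}\log(2Hd) \leq \epsilon$ for $H$ large achieves this.
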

	
The lemma has the following superficially stronger corollary, which we will also need. The details can be found in \cite[Corollary 7.9]{2023arXiv230110199O}.
	
\begin{cor}\label{cor1} For every $\epsilon > 0$, there exists $H_{0} = H_{0}(\epsilon) \geq 1$ such that the following holds for all $\delta = 2^{-mH}$ with $m \geq 1$ and $H \geq H_{0}$. Let $\mathcal{P} \subset \mathcal{D}_{\delta}$. Then, there exist disjoint $\{2^{-jH}\}_{j = 1}^{m}$-uniform subsets $\mathcal{P}_{1},\ldots,\mathcal{P}_{N} \subset \mathcal{P}$ with the properties
\begin{itemize}
\item $|\mathcal{P}_{j}| \geq \delta^{2\epsilon}|\mathcal{P}|$ for all $1 \leq j \leq N$,
\item $|\mathcal{P} \, \setminus \, (\mathcal{P}_{1} \cup \ldots \cup \mathcal{P}_{N})| \leq \delta^{\epsilon}|\mathcal{P}|$.
\end{itemize}
\end{cor}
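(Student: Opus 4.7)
The plan is to iterate Lemma \ref{l:uniformization} greedily. First, I would pick $H_{0} = H_{0}(\epsilon) \geq 1$ large enough that $H^{-1}\log(2H) \leq \epsilon$ for every $H \geq H_{0}$; this is precisely the hypothesis under which Lemma \ref{l:uniformization} applied to any non-empty subfamily $\mathcal{R} \subset \mathcal{D}_{\delta}([0,1)^{2})$ produces a $\{2^{-jH}\}_{j=1}^{m}$-uniform subfamily of density at least $\delta^{\epsilon}$.

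Next, I would define $\mathcal{P}_{1}, \mathcal{P}_{2}, \ldots \subset \mathcal{P}$ recursively. Set $\mathcal{R}_{0} := \mathcal{P}$. Having defined $\mathcal{R}_{j}$, if $|\mathcal{R}_{j}| \leq \delta^{\epsilon}|\mathcal{P}|$ I stop and declare $N := j$. Otherwise I apply Lemma \ref{l:uniformization} to the set $\bigcup \mathcal{R}_{j} \subset [0,1)^{2}$ and let $\mathcal{P}_{j+1} \subset \mathcal{R}_{j}$ be the subfamily of $\delta$-cubes meeting the resulting $\{2^{-jH}\}_{j=1}^{m}$-uniform subset. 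By the choice of $H_{0}$,
$$|\mathcal{P}_{j+1}| \geq \delta^{\epsilon}|\mathcal{R}_{j}| > \delta^{2\epsilon}|\mathcal{P}|.$$
I then put $\mathcal{R}_{j+1} := \mathcal{R}_{j} \setminus \mathcal{P}_{j+1}$ and continue.

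The iteration must terminate: each successful round removes strictly more than $\delta^{2\epsilon}|\mathcal{P}|$ elements of $\mathcal{P}$, so $N \leq \delta^{-2\epsilon}$. By construction the $\mathcal{P}_{j}$ are pairwise disjoint, each is $\{2^{-jH}\}_{j=1}^{m}$-uniform, and each has cardinality at least $\delta^{2\epsilon}|\mathcal{P}|$. The stopping rule gives the remainder bound
$$|\mathcal{P} \setminus (\mathcal{P}_{1} \cup \cdots \cup \mathcal{P}_{N})| = |\mathcal{R}_{N}| \leq \delta^{\epsilon}|\mathcal{P}|,$$
which is exactly the second bullet point.

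I do not expect any deep obstacle. The one piece of bookkeeping is translating between Definition \ref{def:uniformity}, which is phrased for point sets in $[0,1)^{2}$, and subfamilies of $\mathcal{D}_{\delta}$; this is transparent because a union of $\delta$-cubes is literally a disjoint union of $\delta$-cubes, so "uniform set" and "uniform subfamily" are interchangeable up to notation. The slack between the thresholds $\delta^{\epsilon}$ (for the leftover) and $\delta^{2\epsilon}$ (for each extracted piece) is precisely what guarantees that every invocation of Lemma \ref{l:uniformization} after the first still succeeds in producing a genuinely large uniform subfamily.
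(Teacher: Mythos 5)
Your proof is correct and follows the standard greedy exhaustion argument — iteratively peel off uniform pieces via Lemma~\ref{l:uniformization} until the remainder drops below $\delta^{\epsilon}|\mathcal{P}|$ — which is the same route taken in \cite[Corollary 7.9]{2023arXiv230110199O}, to which the paper delegates the details. The key observation you highlight, that the stopping threshold $\delta^{\epsilon}|\mathcal{P}|$ combined with the $\delta^{\epsilon}$-density guarantee of Lemma~\ref{l:uniformization} yields the $\delta^{2\epsilon}$-cardinality bound (and hence termination in $\leq \delta^{-2\epsilon}$ rounds), is exactly the crux.
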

	
\begin{definition}[Branching function]\label{branchingFunction} Let $H \in \N$, and let $P \subset [0,1)^{d}$ be a $\{\Delta_{j}\}_{j = 1}^{m}$-uniform set, with $\Delta_{j} := 2^{-jH}$, and let $\{N_{j}\}_{j = 1}^{m} \subset \{1,\ldots,2^{dH}\}^{m}$ be the associated sequence. We define the \emph{branching function} $\beta \colon [0,m] \to [0,dm]$ by setting $\beta(0) = 0$, and
		\begin{displaymath} \beta(j) := \frac{\log |P|_{2^{-jH}}}{H} = \frac{1}{H} \sum_{i = 1}^{j} \log N_{i}, \qquad i \in \{1,\ldots,m\}, \end{displaymath}
		and then interpolating linearly. \end{definition}
		
	\begin{definition}[$\epsilon$-linear and superlinear functions] \label{def:eps-linear}
		Given a function $f:[a,b]\to\R$ and numbers $\epsilon,\sigma \geq 0$, we say that $(f,a,b)$ is \emph{$(\sigma,\epsilon)$-superlinear} if
		\[
		f(x) \ge f(a)+\sigma(x-a)-\epsilon(b-a),\qquad x\in [a,b].
		\]
		If $\epsilon = 0$, we simply say that $(f,a,b)$ is $\sigma$-superlinear.
	\end{definition}

The following lemma is \cite[Lemma 8.3]{OS23}, but we give the proof to record the dependence on the constant $\Delta$ more explicitly.

\begin{lemma}\label{rescaled set}
Let $P\subset [0,1)^{d}$ be \emph{$\{\Delta^j\}_{j=1}^m$}-uniform with branching function $\beta$ and let $\delta=\Delta^m$. If $(\beta, a, b)$ is $s$-superlinear for any integers $0\leq a< b\leq m$ and $s>0$, then for any $Q\in\mathcal{D}_{\Delta^a}(P)$, the rescaled set $S_Q(P\cap Q)$ is a $(\Delta^{b-a}, s, C\Delta^{-s})$-set for some $C=C(d)>0$.
\end{lemma}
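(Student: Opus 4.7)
The plan is to verify the $(\Delta^{b-a}, s, C\Delta^{-s})$-set condition directly: for every ball $B(x, r)$ with $r \in [\Delta^{b-a}, 1]$, I would show
\[
|S_Q(P \cap Q) \cap B(x,r)|_{\Delta^{b-a}} \;\leq\; C \Delta^{-s} r^{s} \, |S_Q(P \cap Q)|_{\Delta^{b-a}},
\]
by computing both covering numbers exactly from uniformity and then comparing them via the $s$-superlinearity of $\beta$. There is no probabilistic or geometric ingredient; the proof is a two-scale counting argument.

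First I would unwind the uniformity at every relevant scale. Iterating the definition of $\{\Delta^{j}\}$-uniformity, for any $Q \in \mathcal{D}_{\Delta^{a}}(P)$ and any integer $j$ with $a \leq j \leq m$,
\[
|P \cap Q|_{\Delta^{j}} \;=\; \prod_{i=a+1}^{j} N_{i} \;=\; 2^{H(\beta(j) - \beta(a))}.
\]
Taking $j = b$ and rescaling through $S_{Q}$ (which converts $\Delta^{b}$-cubes to $\Delta^{b-a}$-cubes) yields $|S_Q(P \cap Q)|_{\Delta^{b-a}} = 2^{H(\beta(b) - \beta(a))}$. This is the only computation needed for the denominator.

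Next I would choose the right intermediate scale. Given $r \in [\Delta^{b-a}, 1]$, let $k \in \{a, \ldots, b\}$ be the largest integer with $\Delta^{k-a} \geq r$; maximality of $k$ forces $\Delta^{k-a} \leq r/\Delta$ (the extreme case $k=b$, $r=\Delta^{b-a}$ is immediate). The ball $B(x, r)$ pulled back through $S_{Q}^{-1}$ is a ball of radius $\Delta^{a} r \leq \Delta^{k}$ sitting inside $Q$, so it meets at most $C_{d}$ cubes of $\mathcal{D}_{\Delta^{k}}(\mathbb{R}^{d})$. By the uniformity formula above, each such $Q' \in \mathcal{D}_{\Delta^{k}}(P \cap Q)$ contains exactly $2^{H(\beta(b) - \beta(k))}$ elements of $\mathcal{D}_{\Delta^{b}}(P)$, and hence
\[
|S_Q(P \cap Q) \cap B(x, r)|_{\Delta^{b-a}} \;\leq\; C_{d} \cdot 2^{H(\beta(b) - \beta(k))}.
\]

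The final step is the comparison. Taking ratios and applying the hypothesis $\beta(k) - \beta(a) \geq s(k - a)$ (which is the $s$-superlinearity of $(\beta, a, b)$ evaluated at the integer $k \in [a,b]$), together with $\Delta^{k-a} \leq r/\Delta$,
\[
\frac{|S_Q(P \cap Q) \cap B(x, r)|_{\Delta^{b-a}}}{|S_Q(P \cap Q)|_{\Delta^{b-a}}} \;\leq\; C_{d} \cdot 2^{-H(\beta(k) - \beta(a))} \;\leq\; C_{d} \cdot \Delta^{s(k-a)} \;\leq\; C_{d} \cdot (r/\Delta)^{s} \;=\; C_{d} \Delta^{-s} r^{s},
\]
delivering the claim with $C = C_{d}$ depending only on the dimension. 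I anticipate no conceptual obstacle; the only thing to be careful about is the bookkeeping between the four dyadic scales $a, k, b, b-a$ and recognising that the $\Delta^{-s}$ factor in the constant is precisely the slack produced when snapping the continuous radius $r$ up to the nearest dyadic scale $\Delta^{k-a}$.
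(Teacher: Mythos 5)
Your proof is correct and is essentially the same as the paper's: both compute the covering numbers exactly from uniformity at dyadic scales, use $s$-superlinearity in the form $\beta(k)-\beta(a)\ge s(k-a)$ to bound the ratio, and absorb the loss from snapping $r$ up to the nearest dyadic scale into the constant $\Delta^{-s}$. The only cosmetic difference is that the paper first invokes a rescaling lemma to phrase things in terms of the branching function $\beta_Q$ of $S_Q(P\cap Q)$, whereas you work with $\beta$ directly and pull balls back through $S_Q^{-1}$; the two are equivalent bookkeeping choices.
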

\begin{proof}
By \cite[Lemma 2.25]{2023arXiv230110199O}, $S_Q(P)$ is \emph{$\{\Delta^j\}_{j=1}^{m-a}$}-uniform for any $Q\in\mathcal{D}_{\Delta^a}(P)$, and the corresponding branching function $\beta_Q$ satisfies
\[\beta_Q(x)=\beta(x+a) - \beta(a),~~x\in[0,m-a],\]
and $(\beta_Q, 0, b-a)$ is $s$-superlinear. For any $q\in\mathcal{D}_{\Delta^i}(P)$, $0\leq i\leq b-a$, we have
\begin{equation*}
\begin{split}
|S_Q(P)\cap q|_{\Delta^{b-a}} & =N_{i+1}N_{i+2}\cdots N_{m-a}=\frac{|S_Q(P)|_{\Delta^{b-a}}}{N_1N_2\cdots N_i}\\
& = |S_Q(P)|_{\Delta^{b-a}}2^{-\log(\Delta^{-1})\beta_Q(i)}\\
& \leq |S_Q(P)|_{\Delta^{b-a}}2^{-\log(\Delta^{-1})is}=\Delta^{is}|S_Q(P)|_{\Delta^{b-a}}.
\end{split}
\end{equation*}
In general, for any $r\in[\Delta^{b-a},1]$, there exists $i$ such that $r\in[\Delta^{i+1},\Delta^i)$ and thus $\Delta^i\leq \Delta^{-1}r$. For any  $q\in\mathcal{D}_r(S_Q(P))$, choose any $q_1\in\mathcal{D}_{\Delta^i}(P)$, we simply get
\begin{equation*}
\begin{split}
|S_Q(P)\cap q|_{\Delta^{b-a}}\leq|S_Q(P)\cap q_1|_{\Delta^{b-a}}\leq \Delta^{is}|S_Q(P)|_{\Delta^{b-a}}\leq \Delta^{-s} r^s |S_Q(P)|_{\Delta^{b-a}},
\end{split}
\end{equation*}
as required. By uniformity of $P$, we conclude that $S_Q(P\cap Q)$ is a $(\Delta^{b-a}, s, C\Delta^{-s})$-set.
\end{proof}

The following lemma is \cite[Lemma 5.19]{SW21}:
\begin{lemma}\label{lemma1} Assume that $(f,a,b)$ is $\sigma_{1}$-superlinear and $(f,b,c)$ is $\sigma_{2}$-superlinear with $\sigma_{1} \geq \sigma_{2} \geq 0$. Then, $(f,a,c)$ is $\sigma$-superlinear with
\begin{displaymath} \sigma = \tfrac{b - a}{c - a} \cdot \sigma_{1} + \tfrac{c - b}{c - a} \cdot \sigma_{2} \in [\sigma_{2},\sigma_{1}]. \end{displaymath}
\end{lemma}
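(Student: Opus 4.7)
The plan is to verify the superlinearity inequality $f(x) \ge f(a) + \sigma(x-a)$ for all $x \in [a,c]$ by splitting into the two natural sub-intervals $[a,b]$ and $[b,c]$ and using the assumed bounds on each piece.

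First I would observe that $\sigma$, being a convex combination of $\sigma_1$ and $\sigma_2$ with $\sigma_1 \ge \sigma_2$, satisfies $\sigma_2 \le \sigma \le \sigma_1$. Then for $x \in [a,b]$, the hypothesis on $(f,a,b)$ directly gives $f(x) \ge f(a) + \sigma_1(x-a) \ge f(a) + \sigma(x-a)$, using $\sigma_1 \ge \sigma$ and $x-a \ge 0$. This handles the first case immediately.

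The main calculation is the case $x \in [b,c]$. Here I would chain the two hypotheses: $f(x) \ge f(b) + \sigma_2(x-b)$ and $f(b) \ge f(a) + \sigma_1(b-a)$, which together yield $f(x) \ge f(a) + \sigma_1(b-a) + \sigma_2(x-b)$. The task reduces to showing
\[
\sigma_1(b-a) + \sigma_2(x-b) \;\ge\; \sigma(x-a) \;=\; \tfrac{b-a}{c-a}\sigma_1(x-a) + \tfrac{c-b}{c-a}\sigma_2(x-a).
\]
Setting $u = b-a$, $v = x-b \in [0, c-b]$, and $w = c-b$, so $c-a = u+w$ and $x-a = u+v$, a direct rearrangement (using $1 - \tfrac{u+v}{u+w} = \tfrac{w-v}{u+w}$ and $v - \tfrac{w(u+v)}{u+w} = -\tfrac{u(w-v)}{u+w}$) simplifies the difference of the two sides to
\[
\frac{u(w-v)}{u+w}(\sigma_1 - \sigma_2).
\]
Since $u \ge 0$, $w-v \ge 0$, and $\sigma_1 - \sigma_2 \ge 0$, this is non-negative, which completes the case. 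No genuine obstacle is expected; the only thing to be careful about is the algebraic simplification, and that the hypothesis $\sigma_1 \ge \sigma_2$ is exactly what makes the resulting expression non-negative (which also explains why the lemma as stated requires this ordering).
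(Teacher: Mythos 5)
Your proof is correct. Note that the paper does not actually prove this lemma; it only cites it as \cite[Lemma 5.19]{SW21}, so there is no "paper proof" to compare against. Your argument supplies the standard direct verification: the case $x\in[a,b]$ is immediate from $\sigma\leq\sigma_1$, and for $x\in[b,c]$ the chaining $f(x)\geq f(b)+\sigma_2(x-b)\geq f(a)+\sigma_1(b-a)+\sigma_2(x-b)$ followed by the algebraic identity
\[
\sigma_1(b-a)+\sigma_2(x-b)-\sigma(x-a)=\frac{(b-a)(c-x)}{c-a}\,(\sigma_1-\sigma_2)\geq 0
\]
is exactly what is needed; the nonnegativity uses $b\geq a$, $x\leq c$, and $\sigma_1\geq\sigma_2$, and the claim $\sigma\in[\sigma_2,\sigma_1]$ is immediate since $\sigma$ is a convex combination. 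No gaps.
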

	
The following result is a slight variant of \cite[Lemma 2.1]{2023arXiv230110199O}. In fact, our variant is easier to prove, so we give the full details below the statement:
	
\begin{lemma}\label{l:combinatorial-weak}
Let $f \colon [0,m] \to \R$ be a non-decreasing piecewise affine $d$-Lipschitz function with $f(0) = 0$. Then, there exist sequences
\begin{align}
0&=a_0 < a_1 <\cdots < a_{n} = m, \notag\\
\label{form23} 0&\le \sigma_0<\sigma_1<\cdots <\sigma_{n-1} \le d,
\end{align}
such that:
\begin{enumerate}
\item $(f,a_{j},a_{j + 1})$ is $\sigma_{j}$-superlinear. 
\item $\sum_{j = 0}^{J - 1} \sigma_{j}(a_{j + 1} - a_{j}) = f(a_{J})$ for all $1 \leq J \leq n$.
\end{enumerate}
\end{lemma}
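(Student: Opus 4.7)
The plan is to take $g \colon [0,m] \to \R$ to be the \emph{lower convex envelope} of $f$ on $[0,m]$, i.e.\ the largest convex function satisfying $g \le f$ pointwise. Since $f$ is piecewise affine, $g$ is the lower boundary of the convex hull of the graph of $f$; in particular $g$ is piecewise affine with breakpoints contained in those of $f$ (together with the endpoints $0$ and $m$), and $g$ touches $f$ at each such breakpoint.

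I would first verify three elementary facts. (i) $g(0) = 0 = f(0)$: the affine function $h \equiv 0$ satisfies $h \le f$ because $f$ is non-decreasing with $f(0) = 0$, so $g \ge 0$, forcing $g(0) = 0$. (ii) $g(m) = f(m)$: the affine function $x \mapsto f(m) + d(x - m)$ lies below $f$ on $[0,m]$ by the $d$-Lipschitz hypothesis, so $g(m) \ge f(m)$, and the reverse inequality is immediate. (iii) $g$ is non-decreasing, since a convex function on $[0,m]$ with $g(0) = 0$ and $g \ge 0$ cannot be strictly decreasing on any subinterval.

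Next, let $0 = a_0 < a_1 < \cdots < a_n = m$ be the breakpoints of $g$ after merging any adjacent linear pieces of equal slope, and let $\sigma_j$ denote the slope of $g$ on $[a_j,a_{j+1}]$. By convexity of $g$ the sequence $\sigma_0 < \sigma_1 < \cdots < \sigma_{n-1}$ is strictly increasing; by (iii) we have $\sigma_0 \ge 0$, and using $g(a_{n-1}) = f(a_{n-1})$, $g(m) = f(m)$, and the $d$-Lipschitz bound, $\sigma_{n-1} = (f(m) - f(a_{n-1}))/(m - a_{n-1}) \le d$, so all slopes lie in $[0,d]$. Conclusion (1) then reads, for $x \in [a_j,a_{j+1}]$,
\begin{displaymath}
f(x) \;\ge\; g(x) \;=\; g(a_j) + \sigma_j(x - a_j) \;=\; f(a_j) + \sigma_j(x - a_j),
\end{displaymath}
using $g(a_j) = f(a_j)$; conclusion (2) follows by a telescoping sum,
\begin{displaymath}
\sum_{j=0}^{J-1} \sigma_j(a_{j+1} - a_j) \;=\; \sum_{j=0}^{J-1}\bigl[g(a_{j+1}) - g(a_j)\bigr] \;=\; g(a_J) - g(0) \;=\; f(a_J).
\end{displaymath}

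The only substantive content is the structural claim underlying the whole argument: that $g$ is piecewise affine with breakpoints among those of $f$, and $g(a_j) = f(a_j)$ at each breakpoint of $g$. This is standard, and follows from the description of $g$ as the lower boundary of the convex hull of the finite polygonal graph $\{(x,f(x)) : x \in [0,m]\} \subset \R^{2}$: that lower boundary is itself a polygonal path whose vertices form a subset of the vertices of the original graph. No genuine obstacle is expected here; the proof is essentially a convex-hull bookkeeping exercise.
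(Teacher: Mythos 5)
Your proof is correct, and it takes a genuinely different route from the paper's. The paper proceeds algorithmically: start from the intervals on which $f$ is affine, and repeatedly merge a pair of consecutive intervals whose slopes fail to be strictly increasing, invoking Lemma \ref{lemma1} at each merge to show that superlinearity (property (1)) is preserved, and a direct computation to show that property (2) is preserved; termination is guaranteed because each merge reduces the number of intervals. You instead identify the limit object of that process in one stroke as the lower convex envelope $g$ of $f$, read off the $a_j$ as the breakpoints of $g$ and the $\sigma_j$ as its slopes, and deduce (1) from $f \geq g$ together with $g(a_j) = f(a_j)$, and (2) by telescoping. The one structural input you need --- that the lower boundary of the convex hull of a polygonal graph is itself polygonal with vertices among the original vertices, so that $g$ meets $f$ at every breakpoint of $g$ --- is indeed standard, and your verifications (i)--(iii) plus the endpoint argument for $\sigma_{n-1} \leq d$ are all sound. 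The upshot is that your argument is shorter and more conceptual, identifying the object directly rather than constructing it iteratively, at the cost of leaning on a convex-hull fact that the paper's elementary merging argument sidesteps. Both produce the same partition and slopes.
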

	
\begin{remark}\label{rem1} It will be useful to note that (2) is equivalent to 
\begin{displaymath} f(a_{j + 1}) - f(a_{j}) = \sigma_{j}(a_{j + 1} - a_{j}), \qquad 0 \leq j \leq n - 1. \end{displaymath}
\end{remark}
	
\begin{proof}  The proof in a picture is shown in Figure \ref{fig4}. Since $f$ is piecewise affine, there exists an initial partition of $[0,m]$ into intervals $[A_0,A_1],\ldots,[A_{N - 1}, A_N]$ such that $f$ is affine on each $[A_j,A_{j + 1}]$. In particular, $(f,A_j,A_{j + 1})$ is $\Sigma_j$-(super)linear with $\Sigma_j = [f(A_{j + 1}) - f(A_j)]/(A_{j + 1} - A_j) \leq d$. Now, the points $\{A_j\}$ and the slopes $\{\Sigma_j\}$ would otherwise satisfy all the requirements of the lemma, except that \eqref{form23} may fail: the slopes $\Sigma_j$ may not be in increasing order.
\begin{figure}[h!]
\begin{center}
\begin{overpic}[scale = 0.6]{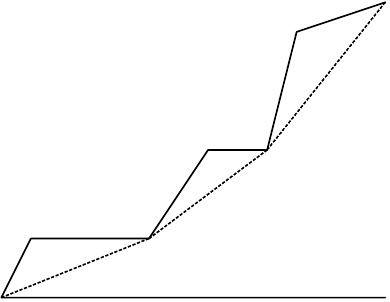}
\put(21,21){$f$}
\put(20,4){\small{$\sigma_{0}$}}
\put(54,24){\small{$\sigma_{1}$}}
\put(85,55){\small{$\sigma_{2}$}}
\end{overpic}
\caption{The function $f$, and the function determined by the slopes $\sigma_{j}$.}\label{fig4}
\end{center}
\end{figure}		
To fix this, we proceed by induction: the plan is to merge consecutive intervals appropriately until \eqref{form23} is satisfied. The price to pay is that $f$ will lose its linearity on the new intervals, but the superlinearity property (1) will be retained. Let us assume that we have already defined a partition $[b_{0},b_{1}],\ldots,[b_{N_{t} - 1},b_{N_{t}}]$ with $N_{t} \leq N$, where $N_{0} = 0$. Let us also assume that we have associated to each interval $[b_{j},b_{j + 1}]$ a slope $\sigma_{j,t}$ such that properties (1)-(2) are valid (notationally adjusted, of course). 
		
Let us assume that \eqref{form23} still fails. Thus, there exists a pair of consecutive slopes $\sigma_{j,t},\sigma_{j + 1,t}$ with $\sigma_{j + 1,t} \leq \sigma_{j,t}$. We merge the intervals $[b_{j},b_{j + 1}]$ and $[b_{j + 1},b_{j + 2}]$ into a new interval $[b,b']$, and to this interval we associate the slope $\sigma$ given by Lemma \ref{lemma1}, namely
\begin{displaymath} \sigma = \tfrac{b_{j + 1} - b_{j}}{b_{j + 2} - b_{j}} \cdot \sigma_{j} + \tfrac{b_{j + 2} - b_{j + 1}}{b_{j + 2} - b_{j}} \cdot \sigma_{j + 1} \in [\sigma_{j+1},\sigma_{j}] \subset [0,d]. \end{displaymath}
The $\sigma$-superlinearity of $f$ on $[b,b']$ follows from Lemma \ref{lemma1}. The next partition is formed -- in the obvious way -- by the previous intervals, except that $[b_{j},b_{j + 1}],[b_{j + 1},b_{j + 2}]$ are replaced by the single interval $[b,b']$. In particular, $N_{t + 1} = N_{t} - 1$. This shows that the construction must terminate in no more than $N$ steps.
		
The only thing to check is that property (2) is retained by the generation $(t + 1)$ partition, and the new slopes. In fact, with Remark \ref{rem1} in mind, (2) reduces to verifying that $\sigma(b' - b) = f(b') - f(b)$. But $b' = b_{j + 2}$ and $b = b_{j}$, so by the definition of $\sigma$, and the inductive hypothesis, we have
\begin{align*} \sigma(b' - b) & = \sigma_{j}(b_{j + 1} - b_{j}) + \sigma_{j + 1}(b_{j + 2} - b_{j + 1})\\
& = [f(b_{j + 1}) - f(b_{j})] + [f(b_{j + 2}) - f(b_{j + 1})] = f(b') - f(b). \end{align*} 
This shows that property (2) is retained, and the construction may proceed.
		
As mentioned already, after no more than $N$ steps the construction must terminate: at this point the remaining slopes are in (strictly) increasing order, and the partitions and slopes at that step are the ones we were after. \end{proof}
	
\subsection{Finding non-concentrated subsets} One further key tool in the proof of Theorem \ref{main} is the next proposition, which allows us to find reasonably large reasonably non-concentrated subsets within arbitrary families of $\delta$-cubes:
	
\begin{proposition}\label{prop2} For every $d \in \N$, $C \geq 1$, and $t > 0$ there exists $\eta_{0} = \eta_{0}(C,t) \in (0,d/C)$ and $\delta_{0} = \delta_{0}(C,t) > 0$ such that the following holds for all $\delta \in 2^{-\N} \cap (0,\delta_{0}]$. Let $\mathcal{P} \subset \mathcal{D}_{\delta}$ be a family with $|\mathcal{P}| = \delta^{-t}$. Then, there exists a scale $\Delta \in 2^{-\N} \cap [\delta,1]$, a number $\eta \in [\eta_{0},d/C]$, a cube $Q \in \mathcal{D}_{\Delta}(\mathcal{P})$, and a subset $\mathcal{P}_{Q} \subset \mathcal{P} \cap Q$ with the following properties:
\begin{enumerate}
\item $|\mathcal{P}_{Q}| \geq \delta^{\eta}|\mathcal{P}|$, and
\item $S_{Q}(\mathcal{P}_{Q})$ is a $((\delta/\Delta),C\eta,O_{C,t}(1))$-set.
\end{enumerate}
Here $S_{Q}$ is the affine homothety mapping $Q$ to $[0,1)^{d}$.
\end{proposition}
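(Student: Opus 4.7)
The plan is to locate the right ``intermediate scale'' via a branching-function analysis. Pick small parameters $\epsilon' = \epsilon'(C,t) > 0$ and $\eta_0 = \eta_0(C,t) > 0$ (to be tuned), and let $H = H(\epsilon')$ be the constant from Corollary \ref{cor1}; set $\Delta := 2^{-H}$ and $\delta = \Delta^m$. First apply Corollary \ref{cor1} to extract a $\{\Delta^j\}_{j = 1}^m$-uniform subset $\mathcal{P}' \subset \mathcal{P}$ with $|\mathcal{P}'| \geq \delta^{\epsilon'}|\mathcal{P}|$; its branching function $\beta \colon [0, m] \to [0, dm]$ is non-decreasing, piecewise affine, $d$-Lipschitz, and satisfies $\beta(m) \geq (t - \epsilon')m$. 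Apply Lemma \ref{l:combinatorial-weak} to $\beta$, obtaining partition points $0 = a_0 < a_1 < \cdots < a_n = m$ and strictly increasing slopes $0 \leq \sigma_0 < \cdots < \sigma_{n - 1} \leq d$ with $(\beta, a_j, a_{j + 1})$ being $\sigma_j$-superlinear and $\beta(a_{j + 1}) - \beta(a_j) = \sigma_j(a_{j + 1} - a_j)$.

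The heart of the argument is selecting an index $j^*$ such that simultaneously (a) $\beta(a_{j^*})/m \leq \sigma_{j^*}/C$ and (b) $\sigma_{j^*} \geq C\eta_0$. The convex minorant $\tilde\beta$ interpolating $(a_j, \beta(a_j))$ linearly with slope $\sigma_j$ on $[a_j, a_{j + 1}]$ satisfies $\tilde\beta(0) = 0$ and is convex, so $\tilde\beta(a_j) \leq a_j \sigma_j$ and condition (a) holds automatically for every $a_j \leq m/C$; hence $\mathcal{J} := \{j : \text{(a) holds}\}$ is non-empty. To secure (b), I would argue by contradiction: if every $j \in \mathcal{J}$ had $\sigma_j < C\eta_0$, then taking $j^* := \max \mathcal{J}$ and combining $\beta(a_{j + 1}) > \sigma_{j + 1} m/C$ (since $j + 1 \notin \mathcal{J}$) with $\beta(a_{j + 1}) = \beta(a_j) + \sigma_j(a_{j + 1} - a_j)$, an induction on $k$ yields $\sigma_{j^* + k} \leq \sigma_{j^*}(1 + C)^k$. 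Applied at $k = n - 1 - j^*$, this contradicts $\sigma_{n - 1} \geq t - \epsilon' \geq t/2$, provided $\eta_0$ is chosen small enough in terms of $C, t, d$. The delicate point is that $n$ may depend on $\log(1/\delta)$, so the implicit bound risks being $\delta$-dependent; to avoid this, I would first coarsen the partition by merging consecutive pieces whose slopes lie in the same multiplicative window $[\eta_0(1 + C)^k, \eta_0(1 + C)^{k + 1}]$ (the merged intervals retain superlinearity with slope equal to the bucket minimum, by Lemma \ref{lemma1}), producing an $O_{C, t, d}(1)$-sized partition before running the recursion.

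Given $j^*$, set $\Delta_{\mathrm{outer}} := \Delta^{a_{j^*}} \in [\delta, 1]$, $\eta := \sigma_{j^*}/C \in [\eta_0, d/C]$, pick any $Q \in \mathcal{D}_{\Delta_{\mathrm{outer}}}(\mathcal{P}')$, and let $\mathcal{P}_Q := \mathcal{P}' \cap Q$. Uniformity of $\mathcal{P}'$ gives $|\mathcal{P}_Q| = 2^{H(\beta(m) - \beta(a_{j^*}))}$; combined with (a) and $\beta(m) \geq (t - \epsilon')m$, this yields $|\mathcal{P}_Q| \geq \delta^\eta |\mathcal{P}|$ once $\epsilon'$ is absorbed by choosing $\epsilon' \ll \eta_0$. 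For the non-concentration, Lemma \ref{lemma1} (applied to $\sigma_{j^*} \leq \sigma_{j^* + 1} \leq \cdots \leq \sigma_{n - 1}$) shows $(\beta, a_{j^*}, m)$ is $\sigma_{j^*}$-superlinear, so Lemma \ref{rescaled set} gives that $S_Q(\mathcal{P}_Q)$ is a $(\Delta^{m - a_{j^*}}, \sigma_{j^*}, C(d)\Delta^{-\sigma_{j^*}})$-set. Since $H = H(C, t)$ is a constant and $\sigma_{j^*} \leq d$, the constant $C(d)\Delta^{-\sigma_{j^*}} \leq C(d) \cdot 2^{Hd}$ is $O_{C, t}(1)$; as $\delta/\Delta_{\mathrm{outer}} = \Delta^{m - a_{j^*}}$ and $C\eta = \sigma_{j^*}$, this is precisely the required $((\delta/\Delta_{\mathrm{outer}}), C\eta, O_{C, t}(1))$-set. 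The main obstacle throughout is producing $j^*$ with (b) in a $\delta$-uniform manner, which is handled by the slope-bucketing in the previous paragraph.
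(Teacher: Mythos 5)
Your approach mirrors the paper's at every structural level: pass to a uniform subset, build the branching function $\beta$, apply Lemma \ref{l:combinatorial-weak} to get a convex piecewise-linear minorant with partition points $a_j$ and slopes $\sigma_j$, and then locate $j^{\ast}$ where the local slope $\sigma_{j^{\ast}}$ dominates both $C\eta_0$ and $C\beta(a_{j^{\ast}})/m$. Your observation that $\beta(a_j) \leq a_j \sigma_j$ (hence condition (a) holds automatically for $a_j \leq m/C$) is correct and a clean way to see $\mathcal{J} \neq \emptyset$.

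The genuine gap is in the argument that some $j^{\ast} \in \mathcal{J}$ also satisfies $\sigma_{j^{\ast}} \geq C\eta_0$. Your induction uses the crude bound $a_{j+1} - a_j \leq m$ in each step, producing $\sigma_{j^{\ast}+k} \leq \sigma_{j^{\ast}}(1+C)^k$, which degrades over $n \sim \log(1/\delta)$ steps; you correctly flag this. But the proposed bucketing fix is circular. Merging pieces with slopes in $[\eta_0(1+C)^k, \eta_0(1+C)^{k+1})$ yields $n'$ of order $\log_{1+C}(d/\eta_0)$, which depends on $\eta_0$. The desired contradiction requires $C\eta_0(1+C)^{n'-1} < t/2$, but $\eta_0(1+C)^{n'-1} \gtrsim \eta_0 \cdot (d/\eta_0) = d$, so you would need $Cd \lesssim t$ --- false in general since $C \geq 1$ and $t \leq d$. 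Shrinking $\eta_0$ does not help: it increases $n'$ at exactly the rate needed to cancel the gain. (Also note that after bucketing, consecutive merged slopes only satisfy $\sigma_{j+1} > \sigma_j$, not $\sigma_{j+1} \geq (1+C)\sigma_j$, so there is no compensating lower bound either.)

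The fix is to keep the actual interval lengths and telescope multiplicatively. For $k \geq 1$, the failure of (a) gives $\sigma_{j^{\ast}+k} < C\beta(a_{j^{\ast}+k})/m$, hence $\beta(a_{j^{\ast}+k+1}) \leq \beta(a_{j^{\ast}+k})\bigl(1 + C(a_{j^{\ast}+k+1}-a_{j^{\ast}+k})/m\bigr)$. Using $1 + x \leq e^x$ and $\sum_k (a_{j^{\ast}+k+1}-a_{j^{\ast}+k}) \leq m$, the product of these factors is at most $e^{C}$, so $\beta(m) \leq \beta(a_{j^{\ast}+1})e^C \leq m\sigma_{j^{\ast}}(1 + 1/C)e^C < m\eta_0(1+C)e^C$. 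This contradicts $\beta(m) \geq (t-\epsilon')m$ once $\eta_0 < t/(2(1+C)e^C)$, uniformly in $\delta$. This is a discrete Gr\"onwall inequality; the paper runs the same computation in continuous form on the renormalised $g(x) := f(mx)/m$, using Gr\"onwall's inequality directly. Your final assembly (rescaling via Lemma \ref{rescaled set}, the uniformity identity for $|\mathcal{P}_Q|$) is fine, modulo the usual small bookkeeping of absorbing the $\delta^{\epsilon'}$ uniformization loss --- the paper itself handles this by halving the internal target exponent, and you would need to do the same rather than simply "absorbing" $\epsilon'$ into $\eta$, since increasing $\eta$ also strengthens the required non-concentration exponent $C\eta$.
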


\begin{remark} The main point of the proposition is the distinction between passing to a subset $\mathcal{P}_{Q}$ of cardinality $\geq \delta^{\eta}|\mathcal{P}|$, and gaining the $(C\eta)$-dimensional non-concentration condition for $S_{Q}(\mathcal{P}_{Q})$ -- for any prescribed $C \geq 1$. We also note that the non-concentration condition in (2) refers to Definition \ref{def:deltaSSet}, and not the Katz-Tao condition. \end{remark}
	
\begin{proof}[Proof of Proposition \ref{prop2}] Fix $C \geq 1$, $t > 0$ as in the statement, and let $\eta_{0} > 0$ be so small that
\begin{equation}\label{form17} 
\eta_{0} \cdot \exp\left( \tfrac{1}{2}C^{2} \right) < t. 
\end{equation} 
Applying initially Lemma \ref{l:uniformization} with "$\eta_{0}$" in place of "$\epsilon$", we may find a $\{2^{-jH}\}_{j = 1}^{m}$-uniform subset $\mathcal{P}' \subset \mathcal{P}$, where $H^{-1}\log(2H) \leq \eta_{0}$, and $|\mathcal{P}'| \geq \delta^{\eta_{0}/2}|\mathcal{P}|$. After this initial step, our efforts will be directed towards finding the numbers $\eta \in [\eta_{0},d/C]$ and the subset $\mathcal{P}_{Q}$ inside $\mathcal{P}'$ instead of $\mathcal{P}$, satisfying $|\mathcal{P}_{Q}| \geq \delta^{\eta/2}|\mathcal{P}'|$. Therefore finally $|\mathcal{P}_{Q}| \geq \delta^{\eta/2 + \eta_{0}/2}|\mathcal{P}| \geq \delta^{\eta}|\mathcal{P}|$. To simplify notation, we will continue denoting $\mathcal{P}'$ by $\mathcal{P}$ -- or in other words we assume without loss of generality that $\mathcal{P}$ is $\{2^{-jH}\}_{j = 1}^{m}$-uniform to start with.
		
Let $\beta \colon [0,m] \to [0,dm]$ be the branching function of $\mathcal{P}$. In particular, $\beta$ is an increasing piecewise affine $d$-Lipschitz function with $\beta(0) = 0$ and $2^{H\beta(m)} = |\mathcal{P}| = \delta^{-t} = 2^{Hm t}$, or in other words $\beta(m) = mt$. We apply Lemma \ref{l:combinatorial-weak} to the function $\beta$. This produces sequences
\begin{displaymath} 0 = a_{0} < a_{1} < \ldots < a_{n} = m \end{displaymath}
and
\begin{equation}\label{form9} 0 \leq \sigma_{0} < \sigma_{1} < \ldots < \sigma_{n - 1} \leq d \end{equation}
such that
\begin{itemize}
\item[(a)] $(\beta,a_{j},a_{j + 1})$ is $\sigma_{j}$-superlinear, and
\item[(b)] $\sum_{j = 0}^{J - 1} (a_{j + 1} - a_{j})\sigma_{j} = \beta(a_{J})$ for all $1 \leq J \leq n$.
\end{itemize}
		
Let $f \colon [0,m] \to [0,dm]$ be the function determined by the intervals $[a_{j},a_{j + 1}]$, the slopes $\sigma_{j}$, and the initial condition $f(0) = \beta(0) = 0$ (thus we require that $f'|_{(a_{j},a_{j + 1})} \equiv \sigma_{j}$). It follows from properties (a)-(b) that $\beta \geq f$, and $\beta(a_{j}) = f(a_{j})$ for all $0 \leq j \leq n$. Moreover, the function $f$ is piecewise affine and convex by \eqref{form9}.
		
Let us consider the renormalised version of $f$ defined by 
\begin{displaymath} g(x) := \tfrac{1}{m}f(mx), \qquad x \in [0,1]. \end{displaymath}
Then $g \colon [0,1] \to [0,t]$ is piecewise affine, $d$-Lipschitz, convex, and satisfies $g(0) = 0$ and $g(1) = t > \eta_{0}$. For $0 \leq j \leq n - 1$, we readily see $g_+'(a_{j}/m) \equiv \sigma_{j}$, namely the value of $g'$ on the interval $(a_{j}/m,a_{j + 1}/m)$. Here $g_+'$ is the right hand derivative.
		
We claim that there exists a point $x_{0} \in [0,1]$, in fact $x_{0} = a_{j}/m$ for some $0 \leq j \leq n - 1$, such that
\begin{equation}\label{form20} g_+'(x_{0}) \geq C\max\{\eta_{0},g(x_{0})\}. \end{equation}
To this end, let $a := \min\{x \in [0,1] : g(x) = \eta_{0}\}$ (this is well-defined, since $g(0) = 0$ and $g(1) = t > \eta_{0}$). We claim that there exists a point $x_{1} \in [a,1)$ such that $g'(x_{1}) \geq Cg(x_{1})$. Otherwise, the converse inequality $g'(x) \leq Cg(x)$ is true for a.e. $x \in [a,1)$. Since $g$ is absolutely continuous, we may deduce from Gr\"onwall's inequality that
\begin{equation}\label{form18} t = g(1) \leq g(a) \cdot \exp \left(\int_{a}^{1} C \, ds \right) \leq \eta_{0} \cdot \exp\left(\tfrac{1}{2}C^{2} \right) \stackrel{\eqref{form17}}{<} t, \end{equation}
leading to a contradiction. So, $x_{1} \in [a,1)$ satisfying \eqref{form20} exists. 
		
If $x_{1} = a_{j}/m$ for some $0 \leq j \leq n - 1$, we set $x_{0} := x_{1}$. Otherwise, we may replace $x_{1}$ by a point of the form $x_{0} = a_{j}/m$ still satisfying \eqref{form20}. Indeed, since $g$ is increasing and has constant derivative on every interval $(a_{j}/m,a_{j + 1}/m)$, we simply select $x_{0} = a_{j}/m$ to be the left end-point of the interval $(a_{j}/m,a_{j + 1}/m)$ where $x_{1} \in [a,1)$ happens to lie. Then $g_+'(x_{0}) = g'(x_{1}) \geq Cg(x_{1}) \geq Cg(x_{0})$, and also $g_+'(x_{0}) = g'(x_{1}) \geq Cg(x_{1}) \geq C\eta_{0}$. Thus \eqref{form20} is established.
		
We then set $\eta := \max\{g(x_{0}),\eta_{0}\}$ and $y_{0} := mx_{0} \in \{a_{0},\ldots,a_{n - 1}\}$. We note that $\eta \leq g_+'(x_{0})/C \leq d/C$, since $g$ is $d$-Lipschitz. Moreover, since $g$ is convex, we then have
\begin{displaymath} f'(y) = g'(y/m) \geq g_+'(x_{0}) \geq C\eta, \qquad y \in [y_{0},m], \end{displaymath} 
whenever the derivative exists. In particular, using that $\beta \geq f$, and $\beta(y_{0}) = f(y_{0})$ (this is why we needed to ensure that $x_{0} = a_{j}/m$) we find
\begin{displaymath} \beta(y) \geq f(y) \geq f(y_{0}) + C\eta(y - y_{0}) = \beta(y_{0}) + C\eta(y - y_{0}), \qquad y \in [y_{0},m]. \end{displaymath}
This means $(\beta, y_0, m)$ is $C\eta$-superlinear. By Lemma \ref{rescaled set}, for any $Q \in \mathcal{D}_{\Delta}(\mathcal{P})$ with $\Delta:= 2^{-y_{0}H}$, the rescaled set $S_{Q}(\mathcal{P} \cap Q)$ is a $((\delta/\Delta),C\eta,O((2^{H})^{C\eta}))$-set. Here $O((2^{H})^{C\eta}) = O_{C,t}(1)$ is a constant depending on $\eta,\eta_{0}$, hence $C,t$. We set $\mathcal{P}_{Q} := \mathcal{P} \cap Q$ for any fixed $Q \in \mathcal{D}_{\Delta}(\mathcal{P})$. Then part (2) of the proposition has been proven.
		
It remains to show that $|\mathcal{P}_{Q}| \geq \delta^{\eta}|\mathcal{P}|$. By the $\{2^{-jH}\}_{j = 1}^{m}$-uniformity of $\mathcal{P}$, we have
\begin{displaymath} |\mathcal{P}_{Q}| = \frac{|\mathcal{P}|}{|\mathcal{P}|_{\Delta}}. \end{displaymath}
On the other hand, using that $f(y_{0}) = \beta(y_{0})$ and $g(x_{0}) \leq \eta$, we have
\begin{displaymath} 
|\mathcal{P}|_{\Delta} = |\mathcal{P}|_{2^{-y_{0}H}} = 2^{H\beta(y_{0})} = 2^{Hf(y_{0})} = 2^{Hm g(x_{0})} \leq 2^{Hm \eta} = \delta^{-\eta}. 
\end{displaymath} 
Combining these facts shows that $|\mathcal{P}_{Q}| \geq \delta^{\eta}|\mathcal{P}|$, as desired.  \end{proof}

\section{Proof of Theorem \ref{main}} \label{section 3}
\subsection{Case $s = t$}\label{subsection 3.1} We restate Theorem \ref{main} in the case $s = t$:
\begin{thm}\label{main2}
	For every $s, u\in(0,1]$, there exist $\delta_0=\delta_0(s,u)>0$ and $\epsilon=\epsilon(s,u)>0$ such that the following holds for $\delta\in (0, \delta_0]$. Let $P \subset [0,1]^{2}$ and $\mathcal{L} \subset \mathcal{A}(2)$ be Katz-Tao $(\delta,s,\delta^{-\epsilon})$-sets. If
	\begin{equation}\label{incidence2}
		|\mathcal{I}(P,\mathcal{L})|_{\delta} \geq \delta^{\epsilon-3s/2},
	\end{equation}
	then there exists a $(\delta,\delta^{u})$-clique $P' \times \mathcal{L}' \subset P \times \mathcal{L}$ with 
	\begin{displaymath} |P'|_{\delta} \geq \delta^{u - s/2} \quad \text{and} \quad |\mathcal{L}'|_{\delta} \geq \delta^{u -s/2}. \end{displaymath}
\end{thm}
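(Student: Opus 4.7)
The plan is to argue by contradiction: assume no $(\delta, \delta^u)$-clique $P' \times \mathcal{L}' \subset P \times \mathcal{L}$ satisfying $|P'|_\delta, |\mathcal{L}'|_\delta \geq \delta^{u-s/2}$ exists. By symmetry (via a rotation) and Lemma \ref{lemma2} I may reduce to the case $\mathcal{L} \subset D([-1,1] \times \R)$, and by Lemma \ref{lemma3} I may replace $\mathcal{L}$ with the dyadic tube family $\mathcal{T} := \mathcal{T}^\delta(\mathcal{L})$ at the cost of absolute constants. Writing $\mathcal{Q} \subset \mathcal{D}_\delta([-1,1] \times \R)$ for the dual cubes corresponding to tubes in $\mathcal{T}$, I apply Corollary \ref{cor1} to both $\mathcal{P} := \mathcal{D}_\delta(P)$ and $\mathcal{Q}$ to pass to $\{2^{-jH}\}_{j=1}^m$-uniform subsets at a common sequence of dyadic scales, losing only a $\delta^{O(\epsilon)}$ factor in the incidence count and the Katz-Tao constants.

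The next step is to use Proposition \ref{prop2} to locate an intermediate scale at which $\mathcal{Q}$ has strong non-concentration. Applied with a large constant $C = C(s,u)$ to be fixed, the proposition yields a scale $\Delta \in [\delta, 1]$, a dyadic cube $Q^* \in \mathcal{D}_\Delta(\mathcal{Q})$, a parameter $\eta \in [\eta_0, 2/C]$, and a subset $\mathcal{Q}^* \subset \mathcal{Q} \cap Q^*$ with $|\mathcal{Q}^*| \geq \delta^\eta |\mathcal{Q}|$ such that the rescaled set $S_{Q^*}(\mathcal{Q}^*)$ is a $(\delta/\Delta, C\eta, O_{C,s}(1))$-set in the sense of Definition \ref{def:deltaSSet}. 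Dually, $Q^*$ corresponds to a dyadic $\Delta$-tube $\mathbf{T} \subset \R^2$ containing the lines in a subfamily $\mathcal{L}^* \subset \mathcal{L}$, and a routine dyadic pigeonhole ensures that the incidence count inside $\mathbf{T}$ remains $|\mathcal{I}(P \cap \mathbf{T}, \mathcal{L}^*)|_\delta \gtrapprox \delta^{\eta + O(\epsilon) - 3s/2}$.

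The argument now proceeds by a dichotomy on the scale $\Delta$. In the favourable case, $\Delta$ is close to $\delta^{1/2}$: the Katz-Tao conditions on $P$ and $\mathcal{L}$ force $|P \cap \mathbf{T}|_\delta \lessapprox \delta^{-s/2}$ and $|\mathcal{L}^*|_\delta \lessapprox \delta^{-s/2}$, and so the pigeonholed incidence bound implies $(P \cap \mathbf{T}) \times \mathcal{L}^*$ is a $(\delta, \delta^u)$-clique of the required dimensions, contradicting the contrary assumption. In the unfavourable case, $\Delta \neq \delta^{1/2}$ (up to small errors), and one rules it out using the Furstenberg-type Theorem \ref{thm:Furstenberg-minimal-non-concentration}: the rescaled tubes inside $\mathbf{T}$ form a $(\delta/\Delta, C\eta)$-set of distinct tubes satisfying the single-scale non-concentration condition \eqref{form10}, and the theorem then produces a lower bound on $\bigcup_T \mathcal{P}_T$ that, unrescaled, translates to an incidence upper bound strictly stronger than $\delta^{-3s/2}$, contradicting \eqref{incidence2} once $\epsilon = \epsilon(s,u)$ is chosen small enough.

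The main obstacle I anticipate is the symmetric calibration of parameters. The favourable case produces a clique only if both size lower bounds $|P'|_\delta \geq \delta^{u-s/2}$ and $|\mathcal{L}'|_\delta \geq \delta^{u-s/2}$ hold simultaneously, which forces one to apply Proposition \ref{prop2} to both $\mathcal{P}$ and $\mathcal{Q}$ and arrange that their critical scales coincide; this is where the branching-function machinery of Lemma \ref{l:combinatorial-weak} enters, to synchronize the two multi-scale decompositions. A second subtlety is verifying the non-concentration hypothesis of Theorem \ref{thm:Furstenberg-minimal-non-concentration} at the precise single scale $\rho = (\delta/\Delta) \cdot |\mathcal{L}^*|_\delta^{1/2}$, as opposed to the all-scales non-concentration supplied by Proposition \ref{prop2}; this is typically arranged by an additional pigeonhole on the branching function of $\mathcal{Q}^*$. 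The case $s = t$ is strictly harder than $s \neq t$ precisely because the symmetric critical scale $\Delta = \delta^{1/2}$ leaves no room for a one-sided argument via Fu-Ren alone, making the Furstenberg set estimate indispensable.
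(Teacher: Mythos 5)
There are genuine gaps, and they point to a structural misreading of where the scale $\Delta$ comes from and what shape the candidate clique must have.

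\textbf{Wrong target for Proposition \ref{prop2}.} You propose to apply Proposition \ref{prop2} to the dual family $\mathcal{Q} \cong \mathcal{T}^{\delta}(\mathcal{L})$ of cardinality $\approx \delta^{-s}$. But this forces $\Delta$ close to $1$, not close to $\delta^{1/2}$: the proposition gives a subset $\mathcal{Q}^* \subset \mathcal{Q} \cap Q^*$ with $|\mathcal{Q}^*| \geq \delta^{\eta}|\mathcal{Q}| \approx \delta^{\eta - s}$ inside a $\Delta$-cube $Q^*$, while the Katz-Tao condition on $\mathcal{L}$ caps $|\mathcal{Q} \cap Q^*| \lessapprox (\Delta/\delta)^{s}$; combining these gives $\Delta \gtrapprox \delta^{\eta/s}$, which is near $1$ since $\eta \leq 2/C$ is tiny. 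The paper instead applies Proposition \ref{prop2} to the \emph{per-tube} point sets $\overline{\mathcal{P}}_{T} = \{p \in \mathcal{P} : x \in \ell \text{ for some } x \in P \cap p,\ \ell \in \mathcal{L}\cap T\}$, each of cardinality $\approx \delta^{-s/2}$, and then pigeonholes the resulting per-tube scales and cubes to a common $\Delta$ and $Q_{0} \in \mathcal{D}_{\Delta}(\mathcal{P})$. Because $|\overline{\mathcal{P}}_T| \approx \delta^{-s/2}$ rather than $\delta^{-s}$, the analogous argument gives $\Delta \gtrapprox \delta^{1/2 + O(\eta)}$, which is the right starting point. The hard work, namely the Furstenberg estimate, is then used to also establish $\Delta \lessapprox \delta^{1/2 - u/3}$.

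\textbf{Wrong shape for the candidate clique.} Even granting a favourable scale $\Delta \approx \delta^{1/2}$, your proposed clique $(P\cap\mathbf{T}) \times \mathcal{L}^*$, where $\mathbf{T}$ is a dyadic $\Delta$-tube, does not satisfy the required upper bound $|P\cap\mathbf{T}|_{\delta} \lessapprox \delta^{-s/2}$. The Katz-Tao condition controls concentration in balls $B(x,r)$, not in tubes: a tube of width $\Delta$ and length $\sim 1$ can contain all of $P$, giving only $|P\cap\mathbf{T}|_{\delta} \lessapprox \delta^{-s}$. Without the upper bound on $|P\cap\mathbf{T}|_{\delta}$, the incidence lower bound does not yield the clique inequality $|\mathcal{I}(P',\mathcal{L}')|_{\delta} \geq \delta^{u}|P'|_{\delta}|\mathcal{L}'|_{\delta}$. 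The paper's clique is $P' = P \cap Q_{0}$ for a $\Delta$-\emph{square} $Q_{0}$, precisely so that the Katz-Tao ball condition gives $|P'|_{\delta} \lessapprox (\Delta/\delta)^{s} \approx \delta^{-s/2}$; the line side $\mathcal{L}'$ is then a tube packet $\Xi_{0}$ (tubes whose traces on $Q_{0}$ share a common $(\delta\times\Delta)$-rectangle), which lies in a $d_{\mathcal{A}(2)}$-ball of radius $\sim\delta/\Delta$ and so is similarly bounded by Katz-Tao.

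\textbf{No second decomposition to synchronise.} Your anticipated need to apply Proposition \ref{prop2} to both $\mathcal{P}$ and $\mathcal{Q}$ and synchronise the resulting multi-scale decompositions via Lemma \ref{l:combinatorial-weak} does not arise: the paper uses a single critical scale, and the line family enters through the combinatorics of tube packets inside $Q_{0}$ (bounding how many tubes can share a comparable trace via the Katz-Tao condition of $\mathcal{L}$), not through a separate uniformisation of $\mathcal{Q}$. Similarly, the single-scale non-concentration \eqref{form10} is verified geometrically from the tube-packet structure and the Katz-Tao condition, not by an extra pigeonhole on a branching function of $\mathcal{Q}^*$.

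Your high-level intuition, locate an intermediate scale, split into a favourable case producing a clique directly and an unfavourable case ruled out by Theorem \ref{thm:Furstenberg-minimal-non-concentration}, is in the right spirit, but the object to which Proposition \ref{prop2} is applied, the geometry of the candidate clique (square versus tube), and the mechanism producing the single-scale non-concentration all need to be reworked along the lines above.
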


From now on, we fix the parameters $s, u \in(0,1]$, as in the statement of Theorem \ref{main2}. The parameter $\epsilon > 0$ will be determined in the proof, see \eqref{defEpsilon}. We record that in the case $s = t$ the incidence inequality of Fu-Ren in Theorem \ref{combinatorial bound} simplifies to
\begin{equation}\label{form24} |\mathcal{I}(P,\mathcal{L})|_{\delta} \leq C_{\epsilon} \delta^{-\epsilon}\sqrt{\delta^{-s}K_{\mathcal{P}}K_{\mathcal{T}}|P|_{\delta}|\mathcal{L}|_{\delta}}, \qquad \epsilon > 0. \end{equation}
We will use $A \lessapprox B$ to signify that there exists a constant $C > 0$, depending only on $s, u$, such that $A \leq C\delta^{-C\epsilon}B$. The two-sided inequality $A \lessapprox B \lessapprox A$ is abbreviated to $A \approx B$.

\begin{proof}[Proof of Theorem \ref{main2}] After an initial reduction performed right away, the proof will be divided into \textbf{Steps 1-3.} The initial reduction is this: we may assume that the lines $\ell = \ell_{a,b} \in \mathcal{L}$ have slopes $a \in [-1,1]$. In fact, there always exists a subset $\mathcal{L}' \subset \mathcal{L}$ such that (a) every pair of lines from $\mathcal{L}'$ forms an angle $\leq \tfrac{1}{10}$ and (b) $|\mathcal{I}(P,\mathcal{L}')|_{\delta} \sim |\mathcal{I}(P,\mathcal{L}')|_{\delta}$. We may then rotate both $P$ and $\mathcal{L}'$ such that the $[-1,1]$-slope condition is satisfied, and afterwards we proceed to find a $(\delta,\delta^{u})$-clique inside $P \times \mathcal{L}'$.

The $[-1,1]$-slope hypothesis is equivalent to $\mathcal{L} \subset D([-1,1] \times \R)$, so Lemma \ref{lemma3} will now allow us to express the $\delta$-covering number $|\mathcal{I}(P',\mathcal{L}')|_{\delta}$, for $P' \subset P$ and $\mathcal{L}' \subset \mathcal{L}$, in the convenient dyadic form $|\mathcal{D}_{\delta}(\mathcal{I}(P',\mathcal{L}'))|$.

\subsection*{Step 1. Reduction to the case where $\mathcal{D}_{\delta}(P)$ is uniform.} We first apply Corollary \ref{cor1} with parameter $\sqrt{\epsilon}$ to the family $\mathcal{P} := \mathcal{D}_{\delta}(P)$. This produces a constant $H \sim_{\epsilon} 1$ and a list of disjoint $\{2^{-jH}\}_{j = 1}^{m}$-uniform subsets $\mathcal{P}_{1},\ldots,\mathcal{P}_{N_0} \subset \mathcal{P}$ such that $|\mathcal{P}_{j}| \geq \delta^{2\sqrt{\epsilon}}|\mathcal{P}|$, and the "remainder set"
\begin{displaymath} \mathcal{R} := \mathcal{P} \, \setminus \, (\mathcal{P}_{1} \cup \ldots \cup \mathcal{P}_{N_0}) \end{displaymath}
satisfies $|\mathcal{R}| \leq \delta^{\sqrt{\epsilon}}|\mathcal{P}| \leq \delta^{\sqrt{\epsilon} - \epsilon - s}$. In particular, we can get $N_0 \leq \delta^{-2\sqrt{\epsilon}}$. Since $\mathcal{R}$ is also a Katz-Tao $(\delta,s, \delta^{-\epsilon})$-set, using \eqref{form24}, and the upper bound of $|\mathcal{R}|$ gives
\begin{displaymath}
|\mathcal{I}(\mathcal{R},\mathcal{L})|_{\delta} \lesssim_{\epsilon} \delta^{-\epsilon}\sqrt{\delta^{-s - 2\epsilon}|\mathcal{R}||\mathcal{L}|_{\delta}} \leq \delta^{\sqrt{\epsilon}/2-3\epsilon}\delta^{-3s/2}\leq \tfrac{1}{2}|\mathcal{I}(P,\mathcal{L})|_{\delta},
\end{displaymath}
provided that $\epsilon<1/64$ and $\delta > 0$ is small enough. Since $P$ is contained in the union of $\mathcal{R}$ and $\{\mathcal{P}_{j}\}$, there exists at least one index $j \in \{1,\ldots,N_0\}$ such that
\begin{displaymath} |\mathcal{I}(P \cap \mathcal{P}_{j},\mathcal{L})|_{\delta} \geq \tfrac{1}{2}N_0^{-1}|\mathcal{I}(P,\mathcal{L})|_{\delta} \gtrsim \delta^{3\sqrt{\epsilon} - 3s/2}. \end{displaymath} 
Here $\mathcal{P}_{j} = \mathcal{D}_{\delta}(P \cap \mathcal{P}_{j})$ is $\{2^{-jH}\}_{j = 1}^{m}$-uniform. Thus, at the cost of replacing "$\epsilon$" by "$3\sqrt{\epsilon}$" in the hypothesis \eqref{incidence2}, we can assume that $\mathcal{P}$ is $\{2^{-jH}\}_{j = 1}^{m}$-uniform for some $H \sim_{\epsilon} 1$.
	
Write $\mathcal{T} := \mathcal{T}^{\delta}(\mathcal{L})$. We claim that $|\mathcal{P}| \geq \delta^{6\epsilon -s}$ and $ |\mathcal{T}| \geq \delta^{6\epsilon -s}$, thus $|\mathcal{P}| \approx \delta^{-s} \approx |\mathcal{T}|$. Indeed, otherwise one can check that \eqref{form24} already gives an upper bound smaller than \eqref{incidence2}. Moreover, we claim that there exists a subset $\overline{\mathcal{T}} \subset \mathcal{T}$ with $|\overline{\mathcal{T}}| \approx |\mathcal{T}| \approx \delta^{-s}$ such that
\begin{equation}\label{form2} 
|\{p \in \mathcal{P} : x \in \ell \text{ for some } x \in P \cap p, \, \ell \in \mathcal{L} \cap T\}| \sim |\mathcal{I}(P,\mathcal{L} \cap T)|_{\delta} \approx \delta^{-s/2}, ~\forall T \in \overline{\mathcal{T}}. 
\end{equation}
(The middle "$\sim$" follows from Lemma \ref{lemma3} applied to $P$ and $\mathcal{L} \cap T = \{\ell \in \mathcal{L} : \ell \subset T\}$.) To see \eqref{form2}, we pigeonhole a subset $\overline{\mathcal{T}} \subset \mathcal{T}$ with the properties that $|\mathcal{I}(P,\mathcal{L} \cap T)|_{\delta}$ is roughly constant for any $T \in \overline{\mathcal{T}}$, say $|\mathcal{I}(P,\mathcal{L} \cap T)|_{\delta} \sim M_0$, and moreover
\begin{displaymath} 
|\mathcal{I}(P,\mathcal{L} \cap \overline{\mathcal{T}})|_{\delta} \approx |\mathcal{I}(P,\mathcal{L})|_{\delta} \approx \delta^{-3s/2}. 
\end{displaymath}
By \eqref{form24}, we have $|\mathcal{I}(P,\mathcal{L} \cap \overline{\mathcal{T}})| \lessapprox \delta^{-s}|\overline{\mathcal{T}}|^{1/2}$. Thus $|\overline{\mathcal{T}}| \approx \delta^{-s}$, and then $M_0 \cdot |\overline{\mathcal{T}}| \approx \delta^{-3s/2}$ implies that $M_0 \approx \delta^{-s/2}$.
In the sequel, we simplify notation by dropping the "bar" and denoting $\overline{\mathcal{T}}$ still by $\mathcal{T}$. (So, formally, the proof will finally produce a $(\delta,\delta^{u})$-clique inside $(P \cap \mathcal{P}_{j}) \times (\mathcal{L} \cap \overline{\mathcal{T}})$.) We have now reduced matters to a situation where $\mathcal{P} = \mathcal{D}_{\delta}(P)$ is $\{2^{-jH}\}_{j = 1}^{m}$-uniform with $H \sim_{\epsilon} 1$, and $\mathcal{T} = \mathcal{T}^{\delta}(\mathcal{L})$ satisfies \eqref{form2}. 
	
\subsection*{Step 2. Finding the dyadic scale $\Delta \in [\delta,1]$.} Fix $T \in \mathcal{T}$. We want to show, roughly speaking, that most of the squares in the set
\begin{displaymath} \overline{\mathcal{P}}_{T} := \{p \in \mathcal{P} : p \in \ell \text{ for some } x \in P \cap p, \, \ell \in \mathcal{L} \cap T\}, \end{displaymath}
familiar with \eqref{form2}, lies inside a single rectangle of dimensions roughly $\delta \times \sqrt{\delta}$. \emph{A priori} we have no information about the distribution of $\overline{\mathcal{P}}_{T}$, but at least Proposition \ref{prop2} applied to each individual $\overline{\mathcal{P}}_{T}$, $T \in \mathcal{T}$, allows us to find a few useful objects.
	
Namely, we apply Proposition \ref{prop2} with $d=2$, $\mathbf{C}:= 400/(su)$ and $t := \tfrac{s}{2}$ (the point being that $|\overline{\mathcal{P}}_{T}| \approx \delta^{-s/2}$). Let 
\begin{equation}\label{def:eta0} \eta_{0} = \eta_{0}(\mathbf{C},s) \leq \frac{2}{\mathbf{C}} \leq \frac{su}{200} \end{equation}
be the constant provided by Proposition \ref{prop2}. With this notation established, we can state the sufficient condition for the constant "$\epsilon$" in Theorem \ref{main2}:
\begin{equation}\label{defEpsilon} 0 < \epsilon \leq c\eta_{0}, \end{equation}
where $c = c(s,u) > 0$ is a small constant depending only on $s,u$, determined later.
	
The conclusion of Proposition \ref{prop2} is that for every $T \in \mathcal{T}$, the following objects exist:
\begin{itemize}
\item[(i) \phantomsection \label{i}]  a number $\eta_{T} \in [\eta_{0},2/\mathbf{C}] \subset [\eta_{0},su/200]$ and a scale $\Delta_{T} \in [\delta,1]$;
\item[(ii) \phantomsection \label{ii}] a square $Q_{T} \in \mathcal{D}_{\Delta_{T}}(\overline{\mathcal{P}}_{T})$ and a subset $\mathcal{P}_{T} \subset \overline{\mathcal{P}}_{T} \cap Q_{T}$.
\end{itemize} 
These objects satisfy
\begin{itemize}
\item[(P1)] \phantomsection \label{P1}$|\mathcal{P}_{T}| \geq \delta^{\eta_{T}}|\overline{\mathcal{P}}_{T}| \gtrapprox \delta^{\eta_{T} - s/2}$ (using \eqref{form2}).
\item[(P2)] \phantomsection \label{P2}$S_{Q_{T}}(\mathcal{P}_{T})$ is a $((\delta/\Delta_{T}),\mathbf{C}\eta_T,O_{\mathbf{C},s}(1))$-set. 
\end{itemize}
The parameters $\Delta_{T},\eta_{T}$ initially depend on "$T$", but this can be fixed by another pigeonholing. Indeed, there exist a subset $\mathcal{T}' \subset \mathcal{T}$ of cardinality $|\mathcal{T}'| \gtrsim (\log \tfrac{1}{\delta})^{-1} |\mathcal{T}|$, and fixed numbers $\eta \in [\eta_{0},2/\mathbf{C}]$ and $\Delta \in [\delta,1] \cap \{2^{-jH}\}_{j = 1}^{m}$ such that 
\begin{displaymath} 
\eta_{T} \in [\eta,2\eta] \quad \text{and} \quad \Delta_{T} \in [\Delta,2^{H}\Delta], \quad \forall T \in \mathcal{T}'. 
\end{displaymath}
Since there will be no difference between $\mathcal{T}'$ and $\mathcal{T}$ for the remainder of the argument, we simplify notation by denoting $\mathcal{T}'$ again by $\mathcal{T}$. In other words, we assume that $\eta_{T} \sim \eta$ and $\Delta_{T} \sim_{\epsilon} \Delta$ for all $T \in \mathcal{T}$.

Now we arrive at a key claim in the proof: assuming \eqref{def:eta0}, \eqref{defEpsilon} and $\delta> 0$ sufficiently small in terms of $s, u$, we have
\begin{equation}\label{mainClaim} 
\delta^{1/2 + 2\eta/s} \lessapprox \Delta \leq \delta^{1/2 - u/3}. 
\end{equation}
Once \eqref{mainClaim} has been established, it will be a simple matter to find a $(\delta,\delta^u)$-clique in \textbf{Step 3.} The rest of \textbf{Step 2.} is devoted to proving \eqref{mainClaim}.

By the Katz-Tao $(\delta, s, \delta^{-\epsilon})$-set condition of $P$ (hence $\mathcal{P}$) and the lower bound \nref{P1} for $|\mathcal{P}_T|$, we have
\begin{displaymath}
(\tfrac{\Delta}{\delta})^s\gtrapprox |\mathcal{P}\cap Q_T|\geq |\mathcal{P}_T|\gtrapprox \delta^{2\eta-s/2}.
\end{displaymath}
This yields the lower bound in \eqref{mainClaim} for $\Delta$.

Before showing the upper bound, we record that the sets $S_{Q_{T}}(\mathcal{P}_{T})$ satisfy a $2$-ends condition of the form \eqref{form11}. To verify this, it suffices to use the $((\delta/\Delta_{T}),\mathbf{C}\eta_T,O_{\mathbf{C},s}(1))$-set condition of $S_{Q}(\mathcal{P}_{T})$:
\begin{equation*}
|S_{Q_T}(\mathcal{P}_{T}) \cap B(y,r)| \leq O_{\mathbf{C},s}(1) r^{\mathbf{C}\eta}|S_{Q_T}(\mathcal{P}_{T})|, \quad \forall y \in \R^{2}, \, T \in \mathcal{T},
\end{equation*}
where $r\in[\delta/\Delta_{T},1]$. Let $r_{0} := (3^{-1}O_{\mathbf{C},s}(1))^{1/(\mathbf{C}\eta)}$, a constant depending only on $s,u$. Note that $r_{0} \geq \delta/\Delta_{T}$ by the lower bound in \eqref{mainClaim}, provided $\delta > 0$ is sufficiently small. Then
\begin{equation}\label{2-ends condition 1}
|S_{Q_T}(\mathcal{P}_{T}) \cap B(y,r_0)| \leq \tfrac{1}{3}|S_{Q_T}(\mathcal{P}_{T})|, \quad \forall y \in \R^{2}, \, T \in \mathcal{T}. 
\end{equation}
Thanks to \eqref{2-ends condition 1}, we are able to use the estimate \eqref{form12} to prove the upper bound for $\Delta$.

As an intermediate goal, we want to show that
\begin{equation}\label{form4} 
|\mathcal{P}|_{\Delta} \lessapprox \delta^{-4\eta}\Delta^{-s}. 
\end{equation}
Since $\mathcal{P}$ is $\{2^{-jH}\}_{j = 1}^{m}$-uniform, and $\Delta \in \{2^{-jH}\}_{j = 1}^{m}$, there exists a constant $M \lessapprox (\Delta/\delta)^s$ such that $|\mathcal{P} \cap Q| = M$ for all $Q \in \mathcal{D}_{\Delta}(\mathcal{P})$, and in fact $M = |\mathcal{P}|/|\mathcal{P}|_{\Delta}$. We claim that $M \gtrapprox \delta^{4\eta}(\Delta/\delta)^s$, which will prove \eqref{form4} because $|\mathcal{P}|_{\Delta} \approx \delta^{-s}/M$.
	
Recall the squares $Q_{T} \in \mathcal{D}_{\Delta}(\mathcal{P})$, $T \in \mathcal{T}$, in \nref{ii}. By the pigeonhole principle, there exists at least one $Q_{0} \in \mathcal{D}_{\Delta}(\mathcal{P})$ such that
\begin{equation}\label{form6} 
|\{T \in \mathcal{T} : Q_{T} = Q_{0}\}| \geq \frac{|\mathcal{T}|}{|\mathcal{P}|_{\Delta}} \approx \frac{|\mathcal{P}|}{|\mathcal{P}|_{\Delta}} = M. 
\end{equation}
The square $Q_{0}$ will be fixed for the rest of the proof, and we write
\begin{equation}\label{form28}
\mathcal{T}_{0} := \{T \in \mathcal{T} : Q_{T} = Q_{0}\}.
\end{equation}
Notice that whenever $Q_{T} = Q_{0}$, since $\cup \mathcal{P}_{T} \subset Q_{0}$,  we have 
\begin{displaymath} 
|\{p \in \mathcal{P} \cap Q_{0} : x \in \ell \text{ for some } x \in P \cap p, \, \ell \in \mathcal{L} \cap T\}| \geq |\mathcal{P}_{T}| \gtrapprox \delta^{2\eta -s/2}. 
\end{displaymath}
This will be important in establishing the $\delta^{u}$-clique property in \textbf{Step 3.}
	
For $T_1, T_2\in \mathcal{T}_0$, we say that two intersections $T_{1}\cap Q_{0}$ and $T_{2}\cap Q_{0}$ are \emph{comparable} if there exists a rectangle of dimensions $\sim (\Delta \times \delta)$ containing both $T_{j}\cap Q_{0}$, $j \in \{1,2\}$. (The exact requirement for the dimensions of $R$ is determined by the following: if $T_{1} \cap Q_{0}$ and $T_{2} \cap Q_{0}$ are incomparable, then the rescaled sets $S_{Q_{0}}(T_{j} \cap Q_{0})$ are contained in distinct ordinary $C(\delta/\Delta)$-tubes. This will be used when we soon apply Proposition \ref{prop3}.)
	
	We claim that the the family
	\[\mathcal{T}_0\cap Q_0:=\{T\cap Q_0: T\in\mathcal{T}_0\}\]
	contains $\gtrapprox \Delta^{s}M$ incomparable intersections. This is based on the Katz-Tao $(\delta,s)$-condition of $\mathcal{L}$ (hence $\mathcal{T}$). Assume that $\{T_{1},\ldots,T_k\}$ is a family of dyadic $\delta$-tubes such that every intersection $\{T_{i} \cap Q_{0}\}$ is comparable to one fixed intersection $\{T_{i_{0}} \cap Q_{0}\}$. Let $l_i$ be some line contained in $T_i$. Then the angle between any two lines of $\{l_1, \ldots, l_k\}$ is $\lesssim \delta/\Delta$, thus there exists a $(2 \times (C\delta/\Delta))$-rectangle $\mathbf{T}$ such that $[0,1]^{2} \cap T_i \subset \mathbf{T}$ for $1 \leq i \leq k$,
	where $C \geq 1$ is absolute. Since $\mathcal{T}$ is a Katz-Tao $(\delta, s, \delta^{-\epsilon})$-set, it follows 
	\begin{equation}\label{form25} k \lessapprox \Delta^{-s}. \end{equation}
	From this and \eqref{form6}, we deduce that the family $\mathcal{T}_0 \cap Q_{0}$ has $\gtrapprox \Delta^s M$ incomparable intersections, as desired. We denote this in a slightly \emph{ad hoc} manner as 
	\begin{displaymath} |\mathcal{T}_0 \cap Q_{0}|_{\Delta \times \delta} \gtrapprox \Delta^{s} M. \end{displaymath}
	This enables us to use Proposition \ref{prop3} after rescaling. Indeed, if $T_1\cap Q_0$ and $T_2\cap Q_0$ are incomparable, then the rescaled sets $S_{Q_0}(T_1\cap Q_0)$ and $S_{Q_0}(T_2\cap Q_0)$ are distinct $(\delta/\Delta)$-tubes. Since $S_{Q_T}(\mathcal{P}_{T})$ satisfies the $2$-ends condition by \eqref{2-ends condition 1}, we infer from Proposition \ref{prop3} applied at scale $\delta/\Delta$ that
	\begin{equation}\label{form7} 
		\begin{split}
			M &= |\mathcal{P} \cap Q_{0}| = |S_{Q_0}(\mathcal{P} \cap Q_{0})|\\
			&\gtrapprox |\mathcal{T} \cap Q_{0}|_{\Delta \times \delta}^{1/2} \cdot \delta^{2\eta - s/2}\gtrapprox \delta^{2\eta}\left(\tfrac{\Delta}{\delta}\right)^{s/2}M^{1/2}.
		\end{split}
	\end{equation}
	Rearranging this inequality leads to $M \gtrapprox \delta^{4\eta}(\Delta/\delta)^s$. This finally proves \eqref{form4}. 
	
	We can now deduce from \eqref{form6} that
	\begin{equation}\label{form14} 
		|\mathcal{T}_{0}| \gtrapprox \delta^{4\eta}\left(\frac{\Delta}{\delta}\right)^s. 
	\end{equation}
	
	Next, define a subset $\Xi \subset \mathcal{T}_{0}$ to be a \emph{tube packet} if $\Xi$ has the form 
	\begin{displaymath} \Xi = \{T \in \mathcal{T}_{0} : T \cap Q_{0} \subset R\}, \end{displaymath}
	where $R$ is a rectangle of dimensions $\sim (\delta \times \Delta)$. Thus, the intersections $T \cap Q_{0}$, $T \in \Xi$, are pairwise comparable. In \eqref{form25}, we showed that the cardinality of every tube packet $\Xi$ satisfies $|\Xi|\lessapprox \Delta^{-s}$. By the pigeonhole principle, there exists a value $n \in \{1,\ldots,\lessapprox \Delta^{-s}\}$ such that $\approx |\mathcal{T}_{0}|$ tubes of $\mathcal{T}_{0}$ are contained in tube packets $\Xi_{1},\ldots,\Xi_{L}$ with $|\Xi_{j}| \sim n$. Since $n \lessapprox \Delta^{-s}$, we have the lower bound
	\begin{equation}\label{form26}                                                                                                                                         
		L \approx |\mathcal{T}_{0}|/n \stackrel{\eqref{form14}}{\gtrapprox} \delta^{4\eta}(\Delta/\delta)^s \cdot \Delta^s = \delta^{4\eta}(\Delta^{2}/\delta)^s.   
	\end{equation}
	On the other hand, we can match this with an upper bound by repeating the argument of \eqref{form7} (and using $|\mathcal{T} \cap Q_{0}|_{\Delta \times \delta} \geq L$):
	\begin{equation}\label{form21} 
		\left(\frac{\Delta}{\delta}\right)^s \gtrapprox M = |\mathcal{P} \cap Q_{0}| \gtrapprox L^{1/2} \cdot \delta^{2\eta -s/2}. 
	\end{equation}
	Rearranging this gives $L \lessapprox \delta^{-4\eta}(\Delta^{2}/\delta)^s$. Hence we get $\delta^{4\eta}(\Delta^{2}/\delta)^s \lessapprox L \lessapprox \delta^{-4\eta}(\Delta^{2}/\delta)^s$.
	It will also be useful to record that
	\begin{equation}\label{form15} 
		n \approx \frac{|\mathcal{T}_{0}|}{L} \stackrel{\eqref{form14}}{\gtrapprox} \frac{\delta^{4\eta}(\Delta/\delta)^s}{(\Delta^{2}/\delta)^s \cdot \delta^{-4\eta}} = \delta^{8\eta} \cdot \Delta^{-s}. 
	\end{equation}
	
	Next, for each tube packet $\Xi_{j}$, choose one representative $T_{j} \in \Xi_{j}$; we may assume that the intersections $T_{j} \cap Q_{0}$ are incomparable for different indices "$j$" (there exist $\sim L$ packets such that their representatives have this property, and we restrict attention to those packets without changing notation). Let $S_{Q_{0}} \colon Q_{0} \to [0,1)^{2}$ be the rescaling map, and define the following rescaled sets:
	\begin{itemize}
		\item $\mathbb{T} := \{S_{Q_{0}}(T_{j} \cap Q_{0}) : 1 \leq j \leq L\}$;
		\item $\mathbb{P} := \{S_{Q_{0}}(p) : p \in \mathcal{P} \cap Q_{0}\} \subset \mathcal{D}_{\delta/\Delta}$;
		\item $\mathbb{P}_{T} := S_{Q_{0}}(\mathcal{P}_T)$, where $\mathcal{P}_T \subset \mathcal{P} \cap T \cap Q_{T}$ is the subset obtained in \nref{ii}.
	\end{itemize}
	Here $\mathbb{T}$ is a collection of distinct $(\delta/\Delta)$-tubes with $|\mathbb{T}| = L$. We also recall from \nref{P1} that $|\mathbb{P}_{T}| \gtrapprox \delta^{2\eta - s/2}$.
	
	We are about to apply Theorem \ref{thm:Furstenberg-minimal-non-concentration} to the objects $\mathbb{T}$ and $\mathbb{P}_{T}$. We will leave to the reader the small technical point that the sets in $\mathbb{T}$ are not exactly (ordinary) $(\delta/\Delta)$-tubes. Each element of $\mathbb{T}$ is, however, contained in some $C(\delta/\Delta)$-tube. Theorem \ref{thm:Furstenberg-minimal-non-concentration} can then be applied to a maximal distinct subset in the ensuing family of $C(\delta/\Delta)$-tubes.
	
	The main challenge in applying Theorem \ref{thm:Furstenberg-minimal-non-concentration} is to verify the non-concentration condition \eqref{form10} for the collection of $(\delta/\Delta)$-tubes $\mathbb{T}$. This amounts to checking the following. Let
	\begin{equation}\label{def:rho} 
		\rho := \left(\frac{\delta}{\Delta}\right)|\mathbb{T}|^{1/2} \approx \left(\frac{\delta}{\Delta} \right) \cdot \left(\frac{\Delta^{2}}{\delta} \right)^{s/2} \cdot \delta^{\pm 4\eta} = \Delta^{s-1}\delta^{1-s/2 \pm 4\eta}. 
	\end{equation}
	Let $T_{\rho}$ be an arbitrary $(2 \times \rho)$-rectangle, and consider the quantity
	\begin{equation}\label{form16} 
		X := |\{\mathbf{T} \in \mathbb{T} : \mathbf{T} \subset T_{\rho}\}|. 
	\end{equation}
	After rescaling back to $Q_{0}$, there exist $X$ indices "$j$" such that $T_{j} \cap Q_{0} \subset \bar{T}_{\rho}$, where $\bar{T}_{\rho}:=S_{Q_{0}}^{-1}(T_{\rho})$ is now a rectangle of dimensions $(2\Delta \times \rho \Delta)$. A little trigonometry shows that whenever $T_{j} \cap Q_{0} \subset \bar{T}_{\rho}$, then all the tubes $T \in \Xi_{j}$ in the packet represented by $T_{j}$ satisfy
	\begin{displaymath} 
		T \cap [0,1]^{2} \subset [\bar{T}_{\rho}]_{A\rho}, 
	\end{displaymath} 
	where $[\bar{T}_{\rho}]_{A\rho}$ refers to the $A\rho$-neighbourhood of $\bar{T}_{\rho}$, and $A \geq 1$ is an absolute constant. Now, recalling from \eqref{form15} that $|\Xi_{j}| \sim n \gtrapprox \delta^{8\eta} \Delta^{-s}$, we infer from the Katz-Tao $(\delta,s,\delta^{-\epsilon})$-set condition of $\mathcal{T}$ that
	\begin{displaymath} 
		X \cdot \Delta^{-s} \cdot \delta^{8\eta} \lessapprox |\{T \in \mathcal{T} : T \subset [\bar{T}_{\rho}]_{A\rho}\}| \lessapprox \left(\frac{\rho}{\delta} \right)^s \stackrel{\eqref{def:rho}}{\lessapprox} (\Delta^{s-1}\delta^{-s/2 - 4\eta})^s.
	\end{displaymath} 
	This implies $X \lessapprox \delta^{-8\eta-4s\eta} (\Delta^s \cdot \delta^{-s/2})^s$. Since $|\mathbb{T}| = L \gtrapprox \delta^{4\eta} (\Delta^{2}/\delta)^s$ by \eqref{form26}, we see that
	\begin{displaymath} 
		|\{\mathbf{T} \in \mathbb{T} : \mathbf{T} \subset T_{\rho}\}| = X \lessapprox \delta^{-16\eta} \left(\frac{\delta}{\Delta^2} \right)^{s(1-s/2)}  |\mathbb{T}|. 
	\end{displaymath}
	We claim that this implies the upper bound $\Delta \leq \delta^{1/2 - u/3}$ asserted in \eqref{mainClaim}. 
	
	Assume that this fails: thus $\Delta > \delta^{1/2 - u/3}$. Then $\delta^{-16\eta}(\delta/\Delta^2)^{s(1-s/2)}<\delta^{su/5}$ thanks to $\eta<su/200$. Therefore, the non-concentration condition of \eqref{form10} is satisfied with exponent "$su/5$" in place of "$u$". Recalling that the sets $\mathbb{P}_{T}$ are $(\delta/\Delta,\mathbf{C}\eta,O_{\mathbf{C},s}(1))$-sets with $|\mathbb{P}_{T}| \gtrapprox \delta^{-s/2 + 2\eta} =: N$, Theorem \ref{thm:Furstenberg-minimal-non-concentration} (with $\alpha = \tfrac{1}{5} \cdot (\mathbf{C}\eta)su/8$) implies the following improvement over the $2$-ends bound in \eqref{form7}:
	\begin{displaymath}
		\begin{split}
			\left(\frac{\Delta}{\delta}\right)^s &\gtrapprox |\mathbb{P}| \geq N \cdot |\mathbb{T}|^{1/2}\cdot (\delta/\Delta)^{-\mathbf{C}\eta \cdot su/40} \\
			&\stackrel{\eqref{form26}}{\gtrapprox} \delta^{4\eta-s/2} \cdot \left(\frac{\Delta^{2}}{\delta} \right)^{s/2} \cdot (\delta/\Delta)^{-\mathbf{C}\eta \cdot su/40}. 
		\end{split}
	\end{displaymath}
	Rearranging the inequality, using $\Delta>\delta^{1/2-u/3} \geq \delta^{1/2}$, and recalling from above \eqref{def:eta0} that $\mathbf{C} = 400/(su)$, we obtain
	\begin{displaymath}
		1\geq C\delta^{-\tfrac{\mathbf{C}\eta \cdot su}{80}+4\eta+C\epsilon} = C\delta^{-\eta + C\epsilon}. \end{displaymath}
	Here $C =C(s,u) > 0$ is a constant depending only on $s$ and $u$. Since $\epsilon \leq c\eta_{0} \leq \eta/(2C)$ by \eqref{defEpsilon} and \nref{i}, we get a contradiction for all $\delta > 0$ sufficiently small. This concludes the proof of the upper bound in \eqref{mainClaim}.
	
	\subsection*{Step 3. Finding a $(\delta,\delta^u)$-clique.}
	Recalling \eqref{form15}, note that there exists at least one tube packet, denoted $\Xi_{0} = \{T \in \mathcal{T}_{0} : T \cap Q_{0} \subset R_{\Delta}\}$, such that
	\begin{displaymath} |\Xi_{0}| \sim n \gtrapprox \delta^{8\eta} \cdot \Delta^{-s} \geq \delta^{-s/2 + su/3+ 8\eta}. \end{displaymath} 
	Here $R_{\Delta}$ is a rectangle of dimensions $\sim (\delta \times \Delta)$. 
	Since $\Xi_{0} \subset \mathcal{T}_{0}$ (recall the definition of $\mathcal{T}_{0}$ from \eqref{form28}), $|\{p \in \mathcal{P} \cap Q_{0} : x \in \ell \text{ for some } p \in P \cap p, \, \ell \in \mathcal{L} \cap T\}| \gtrapprox \delta^{-s/2 + 2\eta}$ for all $T \in \Xi_{0}$. Now we define
	\begin{equation}\label{clique}
		P':= P \cap Q_{0} \quad \text{and} \quad \mathcal{L}':= \mathcal{L} \cap \Xi_{0} := \{\ell \in \mathcal{L} : \ell \subset T \text{ for some } T \in \Xi_{0}\}.
	\end{equation}
	Then $|\mathcal{I}(P',\mathcal{L}')|_{\delta} \gtrapprox |\Xi_{0}|\delta^{-s/2 + 2\eta} \gtrapprox \delta^{-s + su/3 + 10\eta}$. On the other hand, using the Katz-Tao $(\delta,s,\delta^{-\epsilon})$-property of $P$, the upper bound $|\mathcal{L}'|_{\delta} = |\Xi_{0}| \lessapprox \Delta^{-s}$ valid for all tube packets, and crucially both inequalities in \eqref{mainClaim},
	\[\delta^{-s/2+3\eta}\leq |P'|_{\delta} \leq \delta^{-s/2-su/3-\eta},\quad \delta^{-s/2+su/3+10\eta}\leq |\mathcal{L}'|_{\delta} \leq \delta^{-s/2-3\eta}.\]
	Since $\eta\leq us/200$, it follows from the numerology above that
	\begin{equation*}
		|\mathcal{I}(P',\mathcal{L}')|_{\delta} \geq \delta^{u}|P'|_{\delta}|\mathcal{L}'|_{\delta} ~~ \text{with}~~ |P'|_{\delta} \geq \delta^{u -s/2}, ~~|\mathcal{L}'|_{\delta} \geq \delta^{u -s/2}.
	\end{equation*}
	Thus, $P' \times \mathcal{L}'$ satisfies the claims of Theorem \ref{main2}. \end{proof}

\subsection{Case $s < t$}\label{subsection 3.2} 
The proof is similar to the case $s = t$, except that the argument does not rely on Theorem \ref{thm:Furstenberg-minimal-non-concentration}: in the variant of \textbf{Step 2.} below, a completely elementary argument gives the desired upper bound for $\Delta$. Where the proof is virtually the same as in the case $s = t$, we will omit some repeated details.

From now on, we fix the parameters $u \in (0,1]$ and $s, t\in(0,1]$ with $s<t$, as in the statement of Theorem \ref{main}. Recall that $f(s,t)= (s^2+st+t^{2})/(s+t)$.

\begin{proof}[Proof of Theorem \ref{main} in the case $s < t$]
	We use $A \lessapprox B$ to signify that there exists a constant $C > 0$, depending only on $s,t,u$, such that $A \leq C\delta^{-C\epsilon}B$. Here $\epsilon > 0$ is the constant from the main hypothesis \eqref{form36}. The constant $\epsilon > 0$ will be specified at \eqref{defEpsilon2}. Just like in the case $s = t$, it is easy to reduce matters to the situation where the slopes of the lines in $\mathcal{L}$ lie in $[-1,1]$. This makes Lemma \ref{lemma3} applicable.
	
	\subsection*{Step 1. Reduction to the case where $\mathcal{D}_\delta(P)$ is uniform} We also denote $\mathcal{P}:=\mathcal{D}_\delta(P)$ and $\mathcal{T} := \mathcal{T}^{\delta}(\mathcal{L})$. Then it suffices to prove Theorem \ref{main} under the following additional hypotheses:
	\begin{itemize}
		\item[(i)] $\mathcal{P}$ is $\{2^{-jH}\}_{j = 1}^{m}$-uniform for some $H \sim_{\epsilon} 1$.
		\item[(ii)] $|\mathcal{P}| \approx \delta^{-s}$ and $|\mathcal{T}| \approx \delta^{-t}$.
		\item[(iii)] $|\mathcal{I}(P,\mathcal{L} \cap T)|_{\delta} \approx \delta^{-s^2/(s+t)}, ~\forall T \in \mathcal{T}$, ~~where by Lemma \ref{lemma3}
		\[|\mathcal{I}(P,\mathcal{L} \cap T)|_{\delta} \sim |\{p \in \mathcal{P} : x \in \ell \text{ for some } x \in P \cap p, \, \ell \in \mathcal{L} \cap T\}|.\]
	\end{itemize}
	This reduction was carried out in detail in \textbf{Step 1.} of the case $s = t$, and the arguments are exactly the same, up to changing the numerology, and applying the case $s < t$ of Fu and Ren's Theorem \ref{combinatorial bound}. Morally, (ii) follows from \eqref{form36}, because if either $|\mathcal{P}| \ll \delta^{-s}$ or $|\mathcal{T}| \ll \delta^{-t}$, then Theorem \ref{combinatorial bound} already gives an improvement over \eqref{form36}. Eventually, (iii) follows from (ii) and \eqref{form36} after another pigeonholing argument: morally but inaccurately, this is the computation $|\mathcal{I}(P,\mathcal{L} \cap T)|_{\delta} \approx |\mathcal{T}|^{-1}|\mathcal{I}(P,\mathcal{L})|_\delta \approx \delta^{t - f(s,t)} = \delta^{-s^{2}/(s + t)}$.

	\subsection*{Step 2. Finding the dyadic scale $\Delta \in [\delta,1]$.} 
	The argument in this step will initially resemble the case $s = t$ closely, but eventually Theorem \ref{thm:Furstenberg-minimal-non-concentration} will not be needed. For each $T\in \mathcal{T}$, we write
	\begin{equation}\label{form55} \overline{\mathcal{P}}_{T} := \{p \in \mathcal{P} : p \in \ell \text{ for some } x \in P \cap p, \, \ell \in \mathcal{L} \cap T\}. \end{equation}
	By property (iii) in \textbf{Step 1.}, $|\overline{\mathcal{P}}_{T}|\approx \delta^{-s^2/(s+t)}$. Let $d=2$, $\mathbf{C}:=160/(su(t-s))$ and apply Proposition \ref{prop2} for each $\overline{\mathcal{P}}_{T}$ with parameters $(d,\mathbf{C}, s^2/(s+t))$. Let 
	\begin{equation}\label{def:eta02} \eta_{0} = \eta_{0}(\mathbf{C},s,t) \leq \frac{2}{\mathbf{C}} \leq \frac{su(t-s)}{80} \end{equation}
	be the constant provided by Proposition \ref{prop2}. We claim that the following bound suffices for the parameter "$\epsilon$" in \eqref{form36}:
	\begin{equation}\label{defEpsilon2} 0 < \epsilon \leq c\eta_{0}. \end{equation}
	Here $c = c(s,t,u) > 0$ is a small constant depending only on $s,t,u$, determined later.
	
	Then for each $T \in \mathcal{T}$, there exist:
	\begin{itemize}
		\item[(a) \phantomsection \label{a}] a number $\eta_{T} \in [\eta_{0},2/\mathbf{C}]$ and a scale $\Delta_{T} \in [\delta,1]$;
		\item[(b) \phantomsection \label{b}] a square $Q_{T} \in \mathcal{D}_{\Delta_{T}}(\overline{\mathcal{P}}_{T})$ and a subset $\mathcal{P}_{T} \subset \overline{\mathcal{P}}_{T} \cap Q_{T}$;
	\end{itemize} 
	which satisfy
	\begin{itemize}
		\item[(L1) \phantomsection \label{L1}] $|\mathcal{P}_{T}| \geq \delta^{\eta_{T}}|\overline{\mathcal{P}}_{T}| \gtrapprox \delta^{\eta_{T} - s^{2}/(s + t)}$;
		\item[(L2) \phantomsection \label{L2}] $S_{Q_{T}}(\mathcal{P}_{T})$ is a $((\delta/\Delta_{T}),\mathbf{C}\eta_T,O_{\mathbf{C},s,t}(1))$-set. 
	\end{itemize}
	To remove the dependence on $T$ for $\eta_T$ and $\Delta_T$, pigeonhole a subset $\mathcal{T}' \subset \mathcal{T}$ of cardinality $|\mathcal{T}'| \gtrsim (\log \tfrac{1}{\delta})^{-1} |\mathcal{T}|$, and fixed numbers $\eta \in [\eta_{0},2/\mathbf{C}]$ and $\Delta \in [\delta,1]\cap \{2^{-jH}\}_{j = 1}^{m}$ such that 
	\begin{displaymath} 
		\eta_{T} \in [\eta,2\eta] \quad \text{and} \quad \Delta_{T} \in [\Delta,2^{H}\Delta], \quad \forall T \in \mathcal{T}'. 
	\end{displaymath}
	In the following, we simplify notation by denoting $\mathcal{T}'$ again by $\mathcal{T}$. 
	
	The rest of \textbf{Step 2.} is devoted to proving the following claim: under our choices for $\mathbf{C},\epsilon$, if $\delta>0$ is sufficiently small in terms of $s,t,u$, then
	\begin{equation}\label{mainClaim2} 
		\delta^{t/(s + t) + 2\eta/s} \lessapprox \Delta \leq \delta^{t/(s + t) - u/8}. 
	\end{equation}
	By the Katz-Tao $(\delta, s, \delta^{-\epsilon})$-set property of $P$ and the lower bound on $|\mathcal{P}_T|$, we deduce
	\begin{displaymath}
		(\tfrac{\Delta}{\delta})^s\gtrapprox |\mathcal{P}\cap Q_T|\geq |\mathcal{P}_T|\gtrapprox \delta^{2\eta- s^{2}/(s + t)},
	\end{displaymath}
	which yields the lower bound in \eqref{mainClaim2}.

	We then proceed to prove the upper bound in \eqref{mainClaim2}. We first verify that $|S_{Q_T}(\mathcal{P}_T)|$ satisfies a 2-ends condition. Indeed, for any $x\in\mathbb R^2$ and $r\in[\delta/\Delta_T,1]$, we have from \nref{L2} 
	\[|S_{Q_T}(\mathcal{P}_T)\cap B(x,r)|\leq  O_{\mathbf{C},s,t,}(1) r^{\mathbf{C}\eta}|S_{Q_T}(\mathcal{P}_{T})|.\]
	In particular,
	\begin{equation}\label{form53} |S_{Q_T}(\mathcal{P}_T)\cap B(x,r_0)|< \tfrac{1}{3}|S_{Q_T}(\mathcal{P}_T)|, \qquad ~\forall x\in\mathbb R^2 \end{equation}
	for $r_{0} := (3^{-1}O_{\mathbf{C},s,t}(1))^{1/(\mathbf{C}\eta)}$, which is a constant depending only on $s,t,u$. In particular, a dependence on "$r_{0}$" is allowed in the $\lessapprox$ notation below.
	
	For any $Q\in\mathcal{D}_\Delta(\mathcal{P})$, we get from the $(\delta, s, \delta^{-\epsilon})$-set condition and uniformity of $\mathcal{P}$ that 
	\begin{displaymath}
		M:=\tfrac{|\mathcal{P}|}{|\mathcal{P}|_{\Delta}} = |\mathcal{P}\cap Q| \lessapprox \left(\tfrac{\Delta}{\delta}\right)^s.
	\end{displaymath}
	Recall $Q_{T} \in \mathcal{D}_{\Delta}(\overline{\mathcal{P}}_{T})$ in \nref{b}. By the pigeonhole principle, there exists $Q_{0} \in \mathcal{D}_{\Delta}(\mathcal{P})$ such that
	\begin{equation}\label{form6-s<t} 
		|\{T \in \mathcal{T} : Q_{T} = Q_{0}\}| \geq \frac{|\mathcal{T}|}{|{\mathcal{P}}|_{\Delta}} \approx M \delta^{s-t}. 
	\end{equation}
	(The last equation follows from (ii) in \textbf{Step 2.}) We also write 
	\begin{equation}\label{def T_0}
		\mathcal{T}_{0} := \{T \in \mathcal{T} : Q_{T} = Q_{0}\}.
	\end{equation}
	We record that whenever $T \in \mathcal{T}_{0}$, then $Q_{T} = Q_{0}$ by definition, and \nref{b} and \nref{L1} imply
	\begin{equation}\label{form54}
		 |\overline{\mathcal{P}}_{T} \cap Q_{0}| \geq |\mathcal{P}_{T}| \gtrapprox \delta^{2\eta - s^{2}/(s + t)}, \qquad T \in \mathcal{T}_{0}.
	\end{equation}
	As in the case $s = t$, we need to find a lower bound on the number of incomparable intersections in the family
	\[\mathcal{T}_{0}\cap Q_{0}:=\{T \cap Q_{0} : T \in \mathcal{T}_{0}\}.\]
	Recall that $T_{1} \cap Q_{0}$ and $T_{2} \cap Q_{0}$ are comparable if there exists a rectangle of dimensions $\sim (\delta \times \Delta)$ containing both $T_{j}\cap Q_{0}$, $j \in \{1,2\}$. Assume $\{T_{j}\}_{j=1}^n \subset \mathcal{T}$ is a family (a \emph{tube packet}) such that every intersection $\{T_{j} \cap Q_{0}\}$ is comparable to one fixed intersection $\{T_{j_{0}} \cap Q_{0}\}$. Then $n \lessapprox \Delta^{-t}$ by the $(\delta, t, \delta^{-\epsilon})$-set condition of $\mathcal{T}$, see the proof of \eqref{form25}. From this and \eqref{form6-s<t}, we deduce that the family $\mathcal{T}_{0}\cap Q_{0}$ has $L \gtrapprox \Delta^t M \delta^{s-t}$ incomparable elements. By applying Proposition \ref{prop3} after rescaling by $S_{Q_{0}}$, and the $2$-ends condition we established in \eqref{form53},
	\begin{equation}\label{form7-s<t} 
		M = |\mathcal{P} \cap Q_{0}| \gtrsim L^{1/2} \cdot \delta^{2\eta - s^{2}/(s + t)} \gtrapprox (\Delta^t M \delta^{s-t})^{1/2}\cdot \delta^{2\eta - s^{2}/(s + t)}, 
	\end{equation}
	which implies $M \gtrapprox \delta^{4\eta-2s^{2}/(s + t)+s-t}\Delta^t$, and consequently
	\begin{equation}\label{form33}
		\delta^{4\eta - (s^{2} + t^{2})/(s + t)}\Delta^{t} = \delta^{4\eta-2s^{2}/(s + t) +s-t}\Delta^t\lessapprox M \lessapprox (\tfrac{\Delta}{\delta})^s.
	\end{equation}
	Since $s < t$, we may infer that
	\begin{equation}\label{form32}
		\Delta\lessapprox \delta^{t/(s + t) -4\eta/(t-s)}.
	\end{equation}
	Finally, recall from \eqref{def:eta02} that $\max\{\eta,\eta_{0}\} \leq 2/\mathbf{C} \leq su(t - s)/80$. Recall also that the "$\lessapprox$" notation hides a constant of the form $C\delta^{-C\epsilon}$, where $C = C(s,t,u) > 0$. Recalling from \eqref{defEpsilon2} that $\epsilon \leq c\eta_{0}$, and finally taking $c := C(s,t,u)^{-1}$, we may deduce from \eqref{form32} that $\Delta\leq \delta^{t/(s + t) -u/8}$, provided that $\delta > 0$ is sufficiently small in terms of $s,t,u$. This completes the proof of \eqref{mainClaim2}.

	\subsection*{Step 3. Finding a $(\delta,\delta^{u})$-clique.}
	Recall definition \eqref{def T_0}, and from a combination of \eqref{form6-s<t} and \eqref{form33} we deduce $|\mathcal{T}_{0}| \gtrapprox M\delta^{s - t} \gtrapprox \delta^{4\eta - 2s^{2}/(s + t) + 2s - 2t}\Delta^{t}$. A subset $\Xi \subset \mathcal{T}_{0}$ is a \emph{tube packet} if $\Xi$ has the form 
	\begin{displaymath} \Xi = \{T \in \mathcal{T}_{0} : T \cap Q_{0} \subset R\}, \end{displaymath}
	where $R$ is a rectangle of dimensions $\sim (\delta \times \Delta)$. Thus, the intersections $T \cap Q_{0}$, $T \in \Xi$, are pairwise comparable. We claim that there exists a tube packet $\Xi_{0}$ with $|\Xi_{0}| \geq \delta^{-t^{2}/(s + t) + u/2}$ (this is roughly the extremal cardinality of a tube packet allowed by the Katz-Tao $(\delta,t)$-set property of $\mathcal{T} \subset \mathcal{T}^{\delta}(\mathcal{L})$).
	
	In \textbf{Step 2.} we already showed that every tube packet $\Xi \subset \mathcal{T}_{0}$ satisfies $|\Xi|\lessapprox \Delta^{-t}$. By the pigeonhole principle, there exists a value $n \in \{1,\ldots,\lessapprox \Delta^{-t}\}$ such that $\approx |\mathcal{T}_{0}|$ tubes of $\mathcal{T}_{0}$ are contained in tube packets $\Xi_{1},\ldots,\Xi_{L}$ with $|\Xi_{j}| \sim n$. To get an upper bound for $L$, we recall the ("2-ends") lower bound \eqref{form7-s<t}:
	\begin{equation*}
		\left(\tfrac{\Delta}{\delta}\right)^s \gtrapprox  |\mathcal{P} \cap Q_{0}| \gtrapprox L^{1/2} \cdot \delta^{2\eta - s^{2}/(s + t)}.
	\end{equation*}
	This implies $L \lessapprox \delta^{-4\eta}\Delta^{2s}\delta^{-2st/(s + t)}$, so
	\begin{equation*}
		n \approx \frac{|\mathcal{T}_{0}|}{L} \gtrapprox \frac{\delta^{4\eta - 2s^{2}/(s + t) + 2s - 2t}\Delta^{t}}{\delta^{-4\eta}\Delta^{2s}\delta^{-2st/(s + t)}} = \delta^{8\eta} \cdot \delta^{(2st - 2t^{2})/(s + t)}\Delta^{t-2s}.
	\end{equation*}
	Using finally $\delta^{t/(s + t) + 2\eta/s} \lessapprox \Delta \leq \delta^{t/(s + t) - u/8}$ (by \eqref{mainClaim2}), $\eta \leq 2/\mathbf{C}$ (by \eqref{def:eta02}) and $\epsilon \leq 2c/\mathbf{C}$ (by \eqref{defEpsilon2}), we obtain after a little algebra the desired inequality $n \geq \delta^{-t^{2}/(s + t)+u/2}$. In particular, there exists a tube packet $\Xi_{0}= \{T \in \mathcal{T}_{0} : T \cap Q_{0} \subset R_{\Delta}\}$ such that $|\Xi_{0}| \sim n \geq \delta^{-t^{2}/(s + t)+u/2}$. Here $R_{\Delta}$ is a rectangle of dimensions $\sim (\delta \times \Delta)$.
	
	As in the case $s = t$, we now define
	\begin{displaymath}
		P':= P \cap Q_{0} \quad \text{and} \quad \mathcal{L}':= \mathcal{L} \cap \Xi_{0} := \{\ell \in \mathcal{L} : \ell \subset T \text{ for some } T \in \Xi_{0}\}.
	\end{displaymath}
	As recorded in \eqref{form54}, $|\overline{\mathcal{P}}_T\cap Q_0|\gtrapprox \delta^{2\eta-s^2/(s+t)}$ for any $T\in\Xi_{0}\subset \mathcal{T}_0$. Recalling the definition of $\overline{\mathcal{P}}_{T}$ from \eqref{form55}, this implies 
	\begin{displaymath} |\mathcal{I}(P',\mathcal{L}')|_{\delta} \gtrapprox |\Xi_{0}|\delta^{2\eta - s^{2}/(s + t)} \gtrsim \delta^{2\eta + u/2 - (s^{2} + t^{2})/(s + t)}. \end{displaymath}
	It further follows from the Katz-Tao conditions of $P' \subset P$ and $\mathcal{L}' \subset \mathcal{L}$ that
	\[\delta^{-s^{2}/(s + t)+2\eta}\leq |P'|_{\delta}\leq \delta^{-s^{2}/(s + t)-u/6-\eta},\quad \delta^{-t^{2}/(s + t)+u/2}\leq |\mathcal{L}'|_{\delta}\leq \delta^{-t^{2}/(s + t)-u/40-2\eta}.\]
	Recalling that $\eta\leq su(t-s)/80$, we easily conclude
	\begin{equation*}
		|\mathcal{I}(P', \mathcal{L}')|_{\delta}\sim |\mathcal{D}_\delta(\mathcal{I}(P', \mathcal{L}'))|\geq \delta^{u}|P'|_{\delta}|\mathcal{L}'|_{\delta} ~~ \text{with}~~ |P'|_{\delta}\geq \delta^{u -\frac{s^2}{s+t}}, ~~|\mathcal{L}'|_{\delta}\geq \delta^{u -\frac{t^2}{s+t}}.
	\end{equation*}
	This means that $P' \times \mathcal{L}'$ satisfies the claims in Theorem \ref{main}. \end{proof}

\subsection{Case $s>t$}\label{s 3.3} This is a standard duality argument, but we record the details. Our assumptions are: $P \subset [0,1]^{2}$ is a Katz-Tao $(\delta,s,\delta^{-\epsilon})$-set, $\mathcal{L} \subset \mathcal{A}(2)$ is a Katz-Tao $(\delta,t,\delta^{-\epsilon})$-set, $|\mathcal{I}(P,\mathcal{L})|_{\delta} \geq \delta^{\epsilon - f(s,t)}$, and $s > t$.

We may assume that the slopes of the lines in $\mathcal{L}$ lie in $[-1,1]$, equivalently $\mathcal{L}\subset D([-1,1] \times \R)$. This is the same argument we already described at the beginning of Section \ref{subsection 3.1}. Assuming this, we infer from Lemma \ref{lemma3} that
\begin{equation}\label{form51} |\mathcal{D}_{\delta}(\mathcal{I}(P,\mathcal{L}))| \gtrsim \delta^{\epsilon - f(s,t)}. \end{equation}
As a second initial reduction, we may assume that all the lines in $\mathcal{L}$ cross the $y$-axis in $[-2,2]$. Indeed, other lines (with slopes in $[-1,1]$) do not contain points of $P \subset [0,1]^{2}$.

For any line $l_{a,b}: =\{(x,y)\in \mathbb R^2: y=ax+b\}$, define the map $D^\star$ by $D^\star(l_{a,b})=(-a,b)$. Then $D^{\star}$ is bi-Lipschitz on the subset of $\mathcal{A}(2)$ consisting of lines with slopes in $[-1,1]$, in particular on $\mathcal{L}$. We write
\begin{displaymath}
	D^\star(T)=\{(-a,b): (a,b)\in q\}, \quad T=D(q)\in \mathcal{T}^\delta. 
\end{displaymath}
Now we set
\[P^\star=D^\star(\mathcal{L}), \quad \mathcal{L}^\star=D(P).\]
From the bi-Lipschitz properties of $D,D^{\star}$, it follows that $P^\star$ is a $(\delta, t, \delta^{-\epsilon})$-set contained in $[-1,1]\times [-2,2]$, and $\mathcal{L}^\star$ is a $(\delta, s, \delta^{-\epsilon})$-set of lines with slopes in $[-1, 1]$ and $y$-intersects in $\{0\} \times [-1, 1]$.

We claim that 
\begin{equation}\label{form52} |\mathcal{I}(P^{\star},\mathcal{L}^{\star})|_{\delta} \gtrsim \delta^{\epsilon - f(s,t)}. \end{equation}
Indeed, if $(p, T)$ is a pair counted by the left hand side of \eqref{form51}, there exist $(x, y)\in P \cap p$ and a line $l_{a,b}\in T\cap \mathcal{L}$ such that $(x,y)\in l_{a,b}$. Thus $y=ax+b$, then $b=-ax+y$, which means $(-a, b)\in D(x,y)$. Since 
\[(-a,b)\in P^\star\cap D^\star(T), \quad D(x,y)\in \mathcal{L}^\star\cap D(p),\]
the pair $(D^\star(T), D(p)) \in \mathcal{D}_{\delta}(P^{\star}) \times \mathcal{T}^{\delta}(\mathcal{L}^{\star})$ lies in the set
\begin{displaymath} \{(q,T') \in \mathcal{D}_{\delta}(P^{\star}) \times \mathcal{T}^{\delta}(\mathcal{L}^{\star}) : y \in \ell' \text{ for some } y \in P^{\star} \cap q, \, \ell' \in \mathcal{L}^{\star} \cap T'\}. \end{displaymath}
The map $(T,p) \mapsto (D^{\star}(T),D(p))$ is injective, so the set above has the same cardinality as the set in \eqref{form51}. This proves $|\mathcal{I}(P^{\star},\mathcal{L}^{\star})|_{\delta} \gtrsim \delta^{\epsilon - f(s,t)}$.

Recalling that $s > t$, we have reduced our problem to the case treated in Section \ref{subsection 3.2}, where the Katz-Tao exponent of $P^{\star}$ (namely $t$) is strictly lower than than the Katz-Tao exponent of $L^{\star}$ (namely $s$), and moreover $|\mathcal{I}(P^\star, \mathcal{L}^\star)|_\delta \gtrsim \delta^{\epsilon-f(s,t)}$. By the result in Section \ref{subsection 3.2}, there exists a $(\delta, \delta^u)$-clique $(P^\star)' \times (\mathcal{L}^\star)' \subset P^\star \times \mathcal{L}^\star$ such that
\[|\mathcal{I}((P^\star)', (\mathcal{L}^\star)')|_\delta \geq \delta^{u} |(P^\star)'|_{\delta} |(\mathcal{L}^\star)'|_{\delta} \quad \text{with}\quad|(P^\star)'|_{\delta} \geq \delta^{u-\tfrac{t^2}{t+s}}, \quad |(\mathcal{L}^\star)'|_{\delta} \geq \delta^{u-\tfrac{s^2}{t+s}}.\]
We finally transform $(P^\star)' \times (\mathcal{L}^\star)'$ back to a subset of $P \times \mathcal{L}$ by setting
\[P':=D^{-1}(\mathcal{L}^\star),\quad \mathcal{L}':={D^\star}^{-1}(P^\star).\]
Now $P' \times \mathcal{L}' \subset P \times \mathcal{L}$ is a $(\delta, \delta^{2u})$-clique; this uses a similar argument as the proof of \eqref{form52}, where we showed that the transformations $D,D^{\star}$ roughly preserve the $\delta$-covering number of incidences, as well as the $\delta$-covering numbers of the sets $\mathcal{L}^{\star},P^{\star}$.

\section{Proofs of Corollary \ref{cor3} and Proposition \ref{prop4}}\label{s4}

We start by proving Proposition \ref{prop4}. Here is the statement again:

\begin{proposition}\label{prop4a} There exists an absolute constant $C \geq 1$ such that the following holds. Let $P \times \mathcal{L} \subset [0,1)^{2} \times \mathcal{A}(2)$ be a $(\delta,\theta)$-clique. Then, there exists a rectangle $R \subset \R^{2}$ of dimensions $C(\delta \times \Delta)$, where $\Delta \in [\delta,C]$, such that
\begin{equation}\label{form40a} |P \cap R|_{\delta} \gtrsim \theta^{2}|P|_{\delta} \quad \text{and} \quad |\{\ell \in \mathcal{L} : R \subset [\ell]_{C\delta}\}|_{\delta} \gtrapprox \theta^{4}|\mathcal{L}|_{\delta}. \end{equation}
In particular, if $P$ is a Katz-Tao $(\delta,s)$-set, and $\mathcal{L}$ is a Katz-Tao $(\delta,t)$-set, then $|P|_{\delta}^{t}|\mathcal{L}|_{\delta}^{s} \lessapprox \theta^{-6}\delta^{-st}$. \end{proposition}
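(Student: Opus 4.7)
The plan is to work in the dyadic model via Lemma~\ref{lemma3} and extract the rectangle $R$ through popularity, one Cauchy--Schwarz on pairs of tubes, and dyadic pigeonholing; the ``In particular'' clause then follows by plugging the two geometric bounds into the Katz-Tao hypotheses. After a harmless rotation I may assume the lines in $\mathcal{L}$ have slopes in $[-1,1]$, so Lemma~\ref{lemma3} reduces matters to finding $R$ when $P$ is replaced by the dyadic family $\mathcal{P} := \mathcal{D}_\delta(P)$ and $\mathcal{L}$ by $\mathcal{T} := \mathcal{T}^\delta(\mathcal{L})$, with the clique hypothesis becoming $|\mathcal{I}^\flat| \gtrsim \theta |\mathcal{P}||\mathcal{T}|$ where $\mathcal{I}^\flat := \mathcal{D}_\delta(\mathcal{I}(P,\mathcal{L}))$. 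First I would restrict to the popular tubes $\mathcal{T}_0 := \{T : |\mathcal{P}(T)| \gtrsim \theta|\mathcal{P}|\}$ (where $\mathcal{P}(T) := \{p : (p,T) \in \mathcal{I}^\flat\}$), which satisfies $|\mathcal{T}_0| \gtrsim \theta|\mathcal{T}|$ and still supports $\gtrsim \theta|\mathcal{P}||\mathcal{T}|$ of the incidences.

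Next, Cauchy--Schwarz gives $\sum_p |\mathcal{T}_0(p)|^2 \gtrsim \theta^2 |\mathcal{P}||\mathcal{T}|^2$, which expands as $\sum_{(T_1,T_2) \in \mathcal{T}_0^2} |\mathcal{P}(T_1) \cap \mathcal{P}(T_2)|$. Assuming $\theta^2|\mathcal{T}| \gtrsim 1$ (otherwise the conclusion is trivial for a suitable constant $C$), the off-diagonal terms dominate, and for such a pair the intersection $T_1 \cap T_2 \cap [0,1]^2$ is a diamond of diameter $\sim \delta/\alpha(T_1,T_2)$. A dyadic pigeonhole in this diameter produces a scale $\Delta \in [\delta, \sqrt 2]$ concentrating $\gtrapprox \theta^2 |\mathcal{P}||\mathcal{T}|^2$ of the mass, and further pigeonholes on a single square $p_0$ and a single tube $T_0 \in \mathcal{T}_0(p_0)$ produce a sub-family $\mathcal{T}^* := \{T \in \mathcal{T}_0(p_0) : \mathrm{diam}(T \cap T_0) \sim \Delta\}$ of cardinality $|\mathcal{T}^*| \gtrapprox \theta^2|\mathcal{T}|$, together with the side information that $|\mathcal{P}(T_0)| \gtrapprox \theta^2 |\mathcal{P}|$.

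I would then define $R$ to be the $(C\delta \times C\Delta)$-rectangle centered at $p_0$ along $T_0$. Each $T \in \mathcal{T}^*$ passes within $\delta$ of $p_0$ at angle $\sim \delta/\Delta$ to $T_0$ and therefore remains within $O(\delta)$ of $T_0$ across $R$; in particular $R \subset [T]_{C\delta} \subset [\ell]_{C'\delta}$ for every $\ell \subset T$, giving $|\{\ell \in \mathcal{L} : R \subset [\ell]_{C\delta}\}|_\delta \geq |\mathcal{T}^*| \gtrapprox \theta^2|\mathcal{L}|_\delta \geq \theta^4|\mathcal{L}|_\delta$. For the points bound I would combine the popularity lower bound $|\mathcal{P}(T_0)| \gtrapprox \theta^2|\mathcal{P}|$ with a 2-ends pigeonhole on the distribution of $\mathcal{P}(T_0)$ along $T_0$: there is a dyadic scale $\Delta'$ and a $\delta \times \Delta'$ sub-rectangle of $T_0$ containing $\gtrsim |\mathcal{P}(T_0)| \gtrapprox \theta^2 |\mathcal{P}|$ squares. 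Arranging the pigeonholes so that $p_0$ lies inside this sub-rectangle and $\Delta' \lesssim \Delta$ then yields $|P \cap R|_\delta \gtrsim \theta^2 |P|_\delta$.

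The main obstacle will be ensuring compatibility of the tube-angular scale $\Delta$ coming from the Cauchy--Schwarz with the 2-ends concentration scale $\Delta'$ of $\mathcal{P}(T_0)$ on $T_0$; I would address this by first pigeonholing the 2-ends scale across the tubes in $\mathcal{T}_0$ and carrying out the Cauchy--Schwarz within that stratum, absorbing any mismatch into the $\log(1/\delta)$-losses already allowed by the $\lessapprox$ notation. Finally, the ``In particular'' clause is a direct consequence of the two bounds together with the Katz-Tao hypotheses: $\theta^2|P|_\delta \lessapprox |P \cap R|_\delta \lesssim (\Delta/\delta)^s$ from Katz-Tao on $P$, and $\theta^4|\mathcal{L}|_\delta \lessapprox \Delta^{-t}$ from Katz-Tao on $\mathcal{L}$ applied to the $d_{\mathcal{A}(2)}$-ball of radius $O(\delta/\Delta)$ containing $\{\ell : R \subset [\ell]_{C\delta}\}$; raising these to powers $t$ and $s$ respectively and multiplying produces $|P|_\delta^t |\mathcal{L}|_\delta^s \lessapprox \theta^{-2t-4s}\delta^{-st} \leq \theta^{-6}\delta^{-st}$ since $s,t \leq 1$.
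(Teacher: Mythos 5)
Your overall plan—dualize to the dyadic model via Lemma~\ref{lemma3}, Cauchy--Schwarz on pairs of tubes, then dyadic pigeonholing on the intersection diameter—is the same strategy the paper uses, but the \emph{order} and \emph{content} of your pigeonholes differ in a way that loses the crucial piece of information, leaving a genuine gap in the point bound.

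After your three pigeonholes (on the scale $\Delta$, then on a square $p_0$, then on a tube $T_0$), what you have established is that $|\mathcal{T}^*| \gtrapprox \theta^2|\mathcal{T}|$ tubes $T$ satisfy $p_0 \in \mathcal{P}(T) \cap \mathcal{P}(T_0)$ and $\mathrm{diam}(T\cap T_0)\sim\Delta$; this is a membership statement, not a cardinality bound on $\mathcal{P}(T)\cap\mathcal{P}(T_0)$. You therefore have no control on $|P\cap R|_\delta$ beyond what $|\mathcal{P}(T_0)|\gtrsim\theta|\mathcal{P}|$ (not $\theta^2$, incidentally) supplies, and that alone is insufficient because $\mathcal{P}(T_0)$ may be spread uniformly along $T_0$. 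Your attempted fix---``a 2-ends pigeonhole on the distribution of $\mathcal{P}(T_0)$ along $T_0$ producing a $\delta\times\Delta'$ sub-rectangle containing $\gtrsim|\mathcal{P}(T_0)|$ squares''---is simply false in general: if $\mathcal{P}(T_0)$ is roughly evenly distributed along $T_0$, every $\delta\times\Delta'$ sub-rectangle contains only an $O(\Delta')$-fraction of it, so the only sub-rectangle capturing a constant fraction is $T_0$ itself ($\Delta'\sim 1$), which makes the line bound collapse. (What is usually called a 2-ends reduction is the \emph{opposite} statement---that no small ball captures more than a fixed fraction---so the terminology also signals a misunderstanding.) The closing remark about ``pigeonholing the 2-ends scale across the tubes first'' is too vague to repair this, and I don't see how it could.

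The paper avoids the problem entirely by doing the pigeonholes in an order that retains intersection sizes: from $\sum_{T,T'}|\mathcal{P}_T\cap\mathcal{P}_{T'}|\geq\theta^2|\mathcal{P}||\mathcal{T}|^2$ one first extracts $\geq\tfrac12\theta^2|\mathcal{T}|^2$ pairs with $|\mathcal{P}_T\cap\mathcal{P}_{T'}|\geq\tfrac12\theta^2|\mathcal{P}|$, fixes $T_0$ achieving many such partners, and \emph{only then} pigeonholes on the angle $\alpha$. The point is that for each surviving $T$, the set $\mathcal{P}_T\cap\mathcal{P}_{T_0}$ is \emph{automatically} contained in a $C'\delta\times C'(\delta/\alpha)$ rectangle (being a subset of $T\cap T_0$), so every one of those rectangles is ``good'' (contains $\gtrsim\theta^2|\mathcal{P}|$ squares) for free; a boundedly overlapping cover of $T_0$ has $\lesssim\theta^{-2}$ good rectangles, and counting the $\gtrapprox\theta^2|\mathcal{T}|$ tubes against those few rectangles pins down a single $R_0$ satisfying both bounds at once. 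If you rearrange your pigeonholes to keep the intersection-size information---pick $T_0$ so that $|\mathcal{P}(T)\cap\mathcal{P}(T_0)|\gtrsim\theta^2|\mathcal{P}|$ for $\gtrsim\theta^2|\mathcal{T}|$ tubes $T$, then pigeonhole the diameter, then do the good-rectangle counting---your argument becomes essentially the paper's and closes the gap. Your derivation of the ``In particular'' clause from \eqref{form40a} and the Katz-Tao hypotheses is fine and matches the paper.
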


\begin{proof}  Arguing as in Section \ref{section 3}, we may assume $\mathcal{L} \subset D([-1,1) \times \R)$. Denote $\mathcal{P} := \mathcal{D}_{\delta}(P)$ and $\mathcal{T} := \mathcal{T}^{\delta}(\mathcal{L})$. Since $(P,\mathcal{L})$ is a $(\delta, \delta^u)$-clique, we deduce by Lemma \ref{lemma3}, 
\begin{equation}\label{form49} 
\theta|\mathcal{P}||\mathcal{T}| \lesssim  |\{(p,T) \in \mathcal{P} \times \mathcal{T} : x \in \ell \text{ for some } x \in P \cap p, \, \ell \in T \cap \mathcal{L}\}|. 
\end{equation}
For each $p\in \mathcal{P}$ and $T \in \mathcal{T}$, write 
\begin{itemize}
\item $\mathcal{P}_{T} := \{p \in \mathcal{P} : x \in \ell \text{ for some } x \in p\cap P, \, \ell \in T \cap \mathcal{L}\}$,
\item $\mathcal{T}_{p} := \{T \in \mathcal{T} : x \in \ell \text{ for some } x \in p\cap P, \, \ell \in T \cap \mathcal{L}\}$.
\end{itemize}
We start by applying Cauchy-Schwarz:
\begin{displaymath} 
\theta |\mathcal{P}||\mathcal{T}| \stackrel{\eqref{form49}}{\lesssim} \sum_{p \in \mathcal{P}} |\mathcal{T}_{p}| \leq |\mathcal{P}|^{1/2} \Big(\sum_{T,T'} |\mathcal{P}_{T} \cap \mathcal{P}_{T'}| \Big)^{1/2}  \Longrightarrow \sum_{T,T'} |\mathcal{P}_{T} \cap \mathcal{P}_{T'}|\geq \theta^{2}|\mathcal{P}||\mathcal{T}|^{2}. 
\end{displaymath} 
Therefore, there exists $\geq \tfrac{1}{2}\theta^{2}|\mathcal{T}|^{2}$ pairs $(T,T') \in \mathcal{T} \times \mathcal{T}$ such that
\begin{displaymath} 
|\mathcal{P}_{T} \cap \mathcal{P}_{T'}| \geq \tfrac{1}{2}\theta^{2}|\mathcal{P}|. 
\end{displaymath}
In particular, we may fix $T_{0} \in \mathcal{T}$ such that the set
\begin{displaymath} 
\mathcal{T}_{0} := \{T \in \mathcal{T} : |\mathcal{P}_{T} \cap \mathcal{P}_{T_{0}}| \geq \tfrac{1}{2}\theta^{2}|\mathcal{P}|\} 
\end{displaymath}
has cardinality $|\mathcal{T}_{0}| \geq \tfrac{1}{2}\theta^{2}|\mathcal{T}|$. Let $\sigma_{0} := \sigma(T_{0}) \in [-1,1]$ be the slope of $T_{0}$ (Definition \ref{def:dyadicTube}). We further split $\mathcal{T}_{0}$ into $\sim \log(1/\delta)$ subsets such that $|\sigma(T) - \sigma_{0}| \in [\alpha,2\alpha]$ for all $T$ in a fixed subset. (For $\alpha = \delta$, the defining condition is $|\sigma(T) - \sigma_{0}| \leq 2\delta$). One subset has cardinality $\gtrapprox \theta^{2}|\mathcal{T}|$, and we keep denoting this subset $\mathcal{T}_{0}$. Note that 
\begin{displaymath} 
\diam([0,1)^{2} \cap T \cap T_{0}) \leq C(\delta/\alpha), \qquad \forall T \in \mathcal{T}_{0}. \end{displaymath}
	
Next, let $\mathcal{R} = \mathcal{R}(\alpha)$ be a boundedly overlapping cover of $[0,1]^{2} \cap T_{0}$ by rectangles of dimensions $C'\delta \times C'(\delta/\alpha)$, with the property that if $T \in \mathcal{T}_{0}$, then some element $R \in \mathcal{R}$ covers the intersection $[0,1]^{2} \cap T \cap T_{0}$ with further requirement
\begin{equation}\label{form50} 
	\cup (\mathcal{P}_{T} \cap \mathcal{P}_{T_{0}}) \subset R. 
\end{equation}
This can be done if we choose $C'\geq 1$ large enough. We say that a rectangle $R \in \mathcal{R}$ is \emph{good} if $|P \cap R|_{\delta} \geq \tfrac{1}{2}\theta^{2}|\mathcal{P}|$. By the bounded overlap of the family $\mathcal{R}$, there are $\lesssim \theta^{-2}$ good rectangles in $\mathcal{R}$. Moreover, if $R \in \mathcal{R}$ satisfies \eqref{form50} for some $T \in \mathcal{T}_{0}$, then $R$ is good, because in that case
\begin{displaymath} 
|P \cap R|_{\delta} \geq |\mathcal{P}_{T_{0}} \cap \mathcal{P}_{T}| \geq \tfrac{1}{2}\theta^{2}|\mathcal{P}|, \qquad \forall T \in \mathcal{T}_{0}. 
\end{displaymath}

Now, since there are $\lesssim \theta^{-2}$ good rectangles and $\gtrapprox \theta^{2}|\mathcal{T}|$ possible intersections $T \cap T_{0}$, there exists a good rectangle $R_{0} \in \mathcal{R}$ covering $\gtrapprox \theta^{4}|\mathcal{T}|$ intersections $T \cap T_{0}$ (in the sense \eqref{form50}). Whenever $R_{0}$ satisfies \eqref{form50}, we can choose a suitable constant $C''\geq1$ such that $R_{0} \subset [\ell]_{C''\delta}$ for any line $\ell \in \mathcal{L}$ contained in $T$ (and there are such lines, since $T \in \mathcal{T} = \mathcal{T}^{\delta}(\mathcal{L})$. Consequently,
\begin{displaymath} 
|P \cap R_{0}|_{\delta} \geq \tfrac{1}{2}\theta^{2}|\mathcal{P}| \sim \theta^{2}|P|_{\delta} \quad \text{and} \quad |\{\ell \in \mathcal{L} : R_{0} \subset [\ell]_{C''\delta}\}| \gtrapprox \theta^{4}|\mathcal{T}| \sim \theta^{4}|\mathcal{L}|_{\delta}. 
\end{displaymath}
This completes the proof of the proposition, except for the \emph{In particular...} part. The following are consequences of the Katz-Tao conditions:
\begin{displaymath} |P \cap R|_{\delta} \lesssim (\Delta/\delta)^{s} \quad \text{and} \quad |\{\ell \in \mathcal{L} : R \subset [\ell]_{C\delta}\}|_{\delta} \lesssim \Delta^{-t}.\end{displaymath}
Combining these with \eqref{form40a} we find $|P|_{\delta}^{t}|\mathcal{L}|_{\delta}^{s} \lessapprox \theta^{-6}(\Delta/\delta)^{st}\Delta^{-st} = \theta^{-6}\delta^{-st}$, as claimed.  \end{proof} 

Here is again the statement of Corollary \ref{cor3}:

\begin{cor}\label{cor3a} Under the hypotheses of Theorem \ref{main}, there exists a list
\begin{displaymath} (P_{1} \times \mathcal{L}_{1}),\ldots,(P_{n} \times \mathcal{L}_{n}) \subset P \times \mathcal{L} \end{displaymath}
of $(\delta,\delta^{u})$-cliques satisfying \eqref{form41}, with the sets $\mathcal{D}_{\delta}(P_{j})$ disjoint, and $\sum_{j} |\mathcal{I}(P_{j},\mathcal{L}_{j})|_{\delta} \geq \delta^{u - f(s,t)}$. \end{cor}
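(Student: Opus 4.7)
The plan is a standard exhaustion argument based on Theorem \ref{main}: iteratively peel off one clique at a time until the residual incidence count drops below the threshold for invoking Theorem \ref{main} again. First I would fix a parameter $u' \in (0, u]$ with $u' \leq u/3$, let $\epsilon' := \epsilon(s, t, u')$ be the constant furnished by Theorem \ref{main} applied with $u'$ in place of $u$, and choose the $\epsilon$ appearing in Corollary \ref{cor3} to satisfy $\epsilon < \epsilon'$. Setting $P^{(0)} := P$, the iteration proceeds as follows: while $|\mathcal{I}(P^{(j)}, \mathcal{L})|_\delta \geq \delta^{\epsilon' - f(s,t)}$, apply Theorem \ref{main} (with parameter $u'$) to $P^{(j)} \times \mathcal{L}$, which is legitimate because $P^{(j)} \subset P$ inherits the Katz-Tao $(\delta, s, \delta^{-\epsilon})$-property. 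This produces a $(\delta, \delta^{u'})$-clique $P_{j+1} \times \mathcal{L}_{j+1}$ with
\[
|P_{j+1}|_\delta \geq \delta^{u' - s^2/(s+t)}, \qquad |\mathcal{L}_{j+1}|_\delta \geq \delta^{u' - t^2/(s+t)},
\]
which, since $u' \leq u$, is automatically a $(\delta, \delta^u)$-clique satisfying \eqref{form41}. Set $P^{(j+1)} := P^{(j)} \setminus \bigcup \mathcal{D}_\delta(P_{j+1})$; the sets $\mathcal{D}_\delta(P_j)$ are pairwise disjoint by construction. The iteration halts at some stage $N$ with $|\mathcal{I}(P^{(N)}, \mathcal{L})|_\delta < \delta^{\epsilon' - f(s,t)}$.

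Next, by subadditivity of the $\delta$-covering number applied to the inclusion $\mathcal{I}(P, \mathcal{L}) \subset \bigcup_{j=1}^N \mathcal{I}(P_j, \mathcal{L}) \cup \mathcal{I}(P^{(N)}, \mathcal{L})$ (which holds since $P = P^{(N)} \cup \bigcup_j P_j$), together with the choice $\epsilon < \epsilon'$, one obtains
\[
\sum_{j=1}^N |\mathcal{I}(P_j, \mathcal{L})|_\delta \gtrsim |\mathcal{I}(P, \mathcal{L})|_\delta - |\mathcal{I}(P^{(N)}, \mathcal{L})|_\delta \gtrsim \delta^{\epsilon - f(s,t)}.
\]

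The hard part will be converting this sum-over-$j$ bound on $|\mathcal{I}(P_j, \mathcal{L})|$ into the required lower bound on $\sum_j |\mathcal{I}(P_j, \mathcal{L}_j)|_\delta$. The clique property supplies the pointwise estimate $|\mathcal{I}(P_j, \mathcal{L}_j)|_\delta \geq \delta^{u'} |P_j|_\delta |\mathcal{L}_j|_\delta \geq \delta^{3u' - (s^2+t^2)/(s+t)}$, so a lower bound $N \gtrapprox \delta^{-st/(s+t)}$ on the number of cliques would suffice, yielding $\sum_j |\mathcal{I}(P_j, \mathcal{L}_j)|_\delta \gtrapprox \delta^{3u' - f(s,t)} \geq \delta^{u - f(s,t)}$ provided $u' \leq u/3$ and $\epsilon$ is small. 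To lower-bound $N$ one would upper-bound each $|\mathcal{I}(P_j, \mathcal{L})|_\delta$. By Remark \ref{rem5} each $P_j = P \cap Q_j$ lies in a dyadic square $Q_j$ of side $\Delta_j \approx \delta^{t/(s+t)}$ (see also the remark around \eqref{form39}); rescaling the restricted incidence problem $(P_j, \mathcal{L}_j^\ast)$ (where $\mathcal{L}_j^\ast \subset \mathcal{L}$ is the subfamily of lines meeting $Q_j$) by $S_{Q_j}$ to the unit square at scale $\delta/\Delta_j$, and then applying Theorem \ref{combinatorial bound} with the transformed Katz-Tao constants, should yield the sharper per-clique bound.

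The main obstacle is verifying the per-clique upper bound $|\mathcal{I}(P_j, \mathcal{L})|_\delta \lessapprox \delta^{-(s^2+t^2)/(s+t)}$ — exactly the value attained in the Figure~\ref{fig3} extremal configuration, in which $P_j$ only incides with its own $\mathcal{L}_j$. Under the rescaling $S_{Q_j}$, the Katz-Tao constant of the line family is multiplied by a factor $\Delta_j^{-t}$, and one must track this factor carefully through Theorem \ref{combinatorial bound} (together with the bounds $|P_j|_\delta \leq \delta^{-s^2/(s+t)-C\epsilon}$ and $|\mathcal{L}_j^\ast|_\delta \leq \delta^{-t^2/(s+t)-C\epsilon}$) to confirm that the resulting exponent is precisely $(s^2+t^2)/(s+t)$, rather than something strictly larger, which would weaken the lower bound on $N$ and break the final count.
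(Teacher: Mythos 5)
Your exhaustion skeleton is the same as the paper's, and you have correctly put your finger on the one genuine difficulty: producing a per-clique upper bound $|\mathcal{I}(P_j,\mathcal{L})|_\delta \lessapprox \delta^{-(s^2+t^2)/(s+t)}$, so that the number of iterations $N$ can be bounded from below. But you leave that step as a question mark ("one must track this factor carefully\dots to confirm\dots, rather than something strictly larger, which would\dots break the final count"), and the route you suggest --- rescaling $Q_j$ to the unit square and applying Theorem~\ref{combinatorial bound} again --- is genuinely problematic. The map $S_{Q_j}$ is a Euclidean homothety on $\R^2$, but the induced map on $\mathcal{A}(2)$ is \emph{not} a homothety with respect to $d_{\mathcal{A}(2)}$: it stretches the offset component by $\Delta_j^{-1}$ while leaving the angular component untouched. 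As a result, neither the Katz-Tao constant of $S_{Q_j}(\mathcal{L}_j^\ast)$ nor the covering number $|\mathcal{I}(\cdot,\cdot)|_{\delta/\Delta_j}$ of the rescaled incidence set relates cleanly to the original quantities, and a naive application of the Fu--Ren bound at scale $\delta/\Delta_j$ does not obviously give the needed exponent.

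The paper sidesteps this entirely with two ingredients you do not have. First, \emph{before} running the exhaustion, it prunes $P$ to a subset $\bar P$ on which the per-square line count is uniformly controlled (equation \eqref{form42}): for every $p\in\mathcal{D}_\delta(\bar P)$, at most $\delta^{-u/4-t^2/(s+t)}$ many $\delta$-separated lines of $\mathcal{L}$ pass through $P\cap p$. This is a one-time pigeonholing on the quantity $M(p)$, using Theorem~\ref{combinatorial bound} only once, on the original (unrescaled) configuration. Second, it gets a matching \emph{upper} bound $|\bar P_1|_\delta \lesssim \delta^{-u/4-s^2/(s+t)}$ not from any rescaled incidence estimate but from Proposition~\ref{prop4} (the clique is concentrated in a rectangle $R$ of the size predicted by \eqref{form39}) combined with the Katz-Tao condition on $P$. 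Multiplying these two bounds via the trivial Lemma~\ref{lemma4} gives $|\mathcal{I}(\bar P_1,\mathcal{L})|_\delta \lesssim \delta^{-u/2-(s^2+t^2)/(s+t)}$ (equation \eqref{form45}) with no rescaling at all, and the rest of the iteration count goes through exactly as you outlined. So: right strategy, right diagnosis of the bottleneck, but the key step is unresolved in your write-up, and the mechanism you propose for it is likely harder than the one the paper uses.
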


We will use the following observation:
\begin{lemma}\label{lemma4} Let $P \subset \R^{2}$ and $\mathcal{L} \subset \mathcal{A}(2)$. Assume that there exists a constant $M \geq 0$ such that
	\begin{displaymath} |\{\ell \in \mathcal{L} : x \in \ell \text{ for some } x \in P \cap p\}|_{\delta} \leq M, \qquad p \in \mathcal{D}_{\delta}(P). \end{displaymath}
	Then, $|\mathcal{I}(P,\mathcal{L})|_{\delta} \lesssim M|P|_{\delta}$. \end{lemma}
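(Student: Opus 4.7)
The plan is a straightforward counting argument, going through a maximal separated subset of $\mathcal{I}(P,\mathcal{L})$ and assigning each point to its $\delta$-cube in $\mathcal{D}_{\delta}(P)$. The only nuance is that the product metric $d((x,\ell),(x',\ell'))=\max\{|x-x'|,d_{\mathcal{A}(2)}(\ell,\ell')\}$ needs to be unpacked carefully so that the hypothesis (which is stated in terms of $d_{\mathcal{A}(2)}$-covers of lines through a single cube) can be applied.

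Concretely, I would first pick a maximal $(2\delta)$-separated set $\{(x_{i},\ell_{i})\}_{i=1}^{n}\subset \mathcal{I}(P,\mathcal{L})$ in the metric $d$. Standard packing-vs-covering estimates give $n\gtrsim |\mathcal{I}(P,\mathcal{L})|_{\delta}$, so it suffices to bound $n\lesssim M|P|_{\delta}$. For each $i$, choose $p_{i}\in \mathcal{D}_{\delta}(P)$ with $x_{i}\in p_{i}$, and let $I_{p}:=\{i:p_{i}=p\}$ for $p\in \mathcal{D}_{\delta}(P)$.

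Next I would argue that $|I_{p}|\lesssim M$ for every $p\in \mathcal{D}_{\delta}(P)$. If $i,j\in I_{p}$ are distinct, the points $x_{i},x_{j}$ lie in the common $\delta$-cube $p$, so $|x_{i}-x_{j}|\leq \sqrt{2}\delta<2\delta$. The $(2\delta)$-separation of the chosen set in $d$ then forces $d_{\mathcal{A}(2)}(\ell_{i},\ell_{j})\geq 2\delta\geq \delta$. Therefore $\{\ell_{i}:i\in I_{p}\}$ is a $\delta$-separated subset of
\[
\{\ell \in \mathcal{L} : x \in \ell \text{ for some } x \in P \cap p\},
\]
and the hypothesis of the lemma gives $|I_{p}|\lesssim M$. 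Summing over $p$,
\[
n=\sum_{p\in \mathcal{D}_{\delta}(P)}|I_{p}|\lesssim M\cdot |\mathcal{D}_{\delta}(P)|\sim M|P|_{\delta},
\]
which yields $|\mathcal{I}(P,\mathcal{L})|_{\delta}\lesssim M|P|_{\delta}$.

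There is no serious obstacle here; the only point that requires a little care is the choice of the separation scale ($2\delta$ rather than $\delta$), which is dictated by the fact that two $x_{i}$'s sitting in the same $\delta$-cube are only guaranteed to be within $\sqrt{2}\delta$, not strictly less than $\delta$. Picking $2\delta$-separation cleanly forces the line components to be $\delta$-separated whenever the point components share a dyadic cube, which is exactly the input the hypothesis needs.
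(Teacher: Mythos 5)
Your proof is correct and is essentially the same elementary counting argument as the paper's: the paper packages it via subadditivity of covering numbers over the decomposition $\mathcal{I}(P,\mathcal{L})\subset\bigcup_{p}\mathcal{I}(P\cap p,\mathcal{L})$, while you run a maximal separated set and bound each fiber, but the content is identical. The care you take with the separation scale ($2\delta$ versus $\delta$) is appropriate and makes the packing-vs-covering step airtight.
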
 
\begin{proof} Note that $\mathcal{I}(P,\mathcal{L}) \subset \bigcup_{p \in \mathcal{D}_{\delta}(P)} \mathcal{I}(P \cap p,\mathcal{L})$, so
\begin{displaymath} |\mathcal{I}(P,\mathcal{L})|_{\delta} \leq \sum_{p \in \mathcal{D}_{\delta}(P)} |\mathcal{I}(P \cap p,\mathcal{L})|_{\delta}. \end{displaymath}
Here further $\mathcal{I}(P \cap p,\mathcal{L}) \subset p \times \{\ell \in \mathcal{L} : x \in \ell \text{ for some } x \in P \cap p\}$, so
\begin{displaymath} |\mathcal{I}(P \cap p,\mathcal{L})|_{\delta} \lesssim |\{\ell \in \mathcal{L} : x \in \ell \text{ for some } x \in P \cap p\}|_{\delta}. \end{displaymath}
Combining these inequalities gives the claim. \end{proof}

\begin{proof}[Proof of Corollary \ref{cor3a}] We may assume that every $x \in P$ is contained on at least one line from $\mathcal{L}$. This is because we may remove from $P$ the points for which this fails without affecting the hypothesis $|\mathcal{I}(P,\mathcal{L})|_{\delta} \geq \delta^{\epsilon - f(s,t)}$. 

Fix $u > 0$. The statement of Corollary \ref{cor3} only gets stronger for smaller values of $u > 0$, so we may assume that $0 < u \leq st/(s + t)$.

In this proof, the notation "$\lessapprox$" only hides constants of order $(\log(1/\delta))^{C}$. Let $c \in (0,\tfrac{1}{20}]$ be an absolute constant to be determined later. Let $\bar{\epsilon} = \bar{\epsilon}(s,t,cu) > 0$ be the constant provided by Theorem \ref{main} with parameters, $s,t,cu$, and let 
\begin{equation}\label{form46} \epsilon := \min\{u/(8C(s,t)),\bar{\epsilon}/2\}, \end{equation}
where $C(s,t) := \max\{(s + t)/s,100\}$. We now claim the conclusion of Corollary \ref{cor3a} holds if $P$ is a Katz-Tao $(\delta,s,\delta^{-\epsilon})$-set, $\mathcal{L}$ is a Katz-Tao $(\delta,t,\delta^{-\epsilon})$-set, $|\mathcal{I}(P,\mathcal{L})|_{\delta} \geq \delta^{\epsilon - f(s,t)}$, and finally $\delta > 0$ small enough, depending on $s,t,u$ (although we will not track the necessary smallness of $\delta$ explicitly).

We start by finding a subset $\bar{P} \subset P$ such that $|\mathcal{I}(\bar{P},\mathcal{L})|_{\delta} \gtrapprox |\mathcal{I}(P,\mathcal{L})|_{\delta}$, and moreover
\begin{equation}\label{form42} |\{\ell \in \mathcal{L} : x \in \ell \text{ for some } x \in \bar{P} \cap p\}|_{\delta} \leq \delta^{-u/4 -t^{2}/(s + t)}, \qquad p \in \mathcal{D}_{\delta}(\bar{P}). \end{equation}
To see how this is done, write $\mathcal{P} := \mathcal{D}_{\delta}(P)$. For every $p \in \mathcal{P}$, define the quantity
\begin{displaymath} M(p) := |\{\ell \in \mathcal{L} : x \in \ell \text{ for some } x \in P \cap p\}|_{\delta} \in \{1,\ldots,|\mathcal{L}|_{\delta}\}. \end{displaymath} 
Note that $M(p) \geq 1$ since every $p \in \mathcal{P}\cap P$ is contained in at least one line in $\mathcal{L}$ by the reduction in the first paragraph. Let 
\begin{displaymath} P_{M} := \{x \in P : M(p_x) \in [M,2M]\}, \end{displaymath}
where $p_x \in \mathcal{D}_{\delta}$ is the dyadic square containing $x$. Since the condition $M(p_x) \in [M,2M]$ only depends on the dyadic "$\delta$-parent" of $x$, one has $P_{M} = P \cap (\cup \mathcal{D}_{\delta}(P_{M}))$. Now, find a number $M \in \{1,\ldots,|\mathcal{L}|_{\delta}\}$ and a subset $\bar{P} := P_{M}$ such that $M(p_x) \sim M$ for all $x \in \bar{P}$, and
\begin{displaymath} |\mathcal{I}(P,\mathcal{L})|_{\delta} \approx |\mathcal{I}(\bar{P},\mathcal{L})|_{\delta} \gtrsim M|\bar{P}|_{\delta}. \end{displaymath}
By Lemma \ref{combinatorial bound}, and using the Katz-Tao conditions of $\bar{P}$ and $\mathcal{L}$,
\begin{displaymath} |\mathcal{I}(\bar{P},\mathcal{L})| \lesssim_{\epsilon} \delta^{-3\epsilon} \delta^{-st/(s + t)}|\bar{P}|_{\delta}^{s/(s + t)}|\mathcal{L}|_{\delta}^{t/(s + t)} \leq \delta^{-4\epsilon}\delta^{-(st + t^{2})/(s + t)}|\bar{P}|_{\delta}^{s/(s + t)}. \end{displaymath}
Chaining these inequalities, and using $|\mathcal{I}(P,\mathcal{L})|_{\delta} \geq \delta^{\epsilon - f(s,t)}$, we first infer $|\bar{P}|_{\delta} \gtrapprox \delta^{C(s,t)\epsilon - s}$, and next
\begin{displaymath} M \lessapprox \frac{|\mathcal{I}(P,\mathcal{L})|_{\delta}}{\delta^{C(s,t)\epsilon - s}} \lesssim_{\epsilon} \delta^{-C(s,t)\epsilon}\delta^{s - f(s,t)} = \delta^{-C(s,t)\epsilon}\delta^{-t^{2}/(s + t)}. \end{displaymath}
This proves \eqref{form42} by the choice of $\epsilon > 0$ in \eqref{form46}.

Noting that $|\mathcal{I}(\bar{P},\mathcal{L})|_{\delta} \geq \delta^{2\epsilon - f(s,t)} \geq \delta^{\bar{\epsilon} - f(s,t)}$, we may now apply Theorem \ref{main} to find our first $(\delta,\delta^{cu})$-clique $P_{1} \times \mathcal{L}_{1} \subset \bar{P} \times \mathcal{L}$ satisfying 
\begin{equation}\label{form47} |\bar{P}_{1}|_{\delta} \geq \delta^{cu-s^{2}/(s + t)} \quad \text{and} \quad |\mathcal{L}_{1}|_{\delta} \geq \delta^{cu-t^{2}/(s + t)}. \end{equation} 
Here the constant $c>0$ will be chosen later. We note that $\bar{P}_{1}$ can be selected to be of the form $\bar{P}_{1} = \bar{P} \cap (\cup \overline{\mathcal{P}}_{1})$, where $\overline{\mathcal{P}}_{1} \subset \mathcal{D}_{\delta}(\bar{P})$, recall Remark \ref{rem5}. 

We also need a matching upper bound for $|\bar{P}_{1}|_{\delta}$. One way might be to simply restrict $\bar{P}_{1}$ to a further subset, but one would have to be careful with preserving the $(\delta,\delta^{u})$-clique property. A more straightforward argument is to use the first estimate in \eqref{form40a}, combined with \eqref{form39} and the Katz-Tao condition of $\bar{P}_{1} \subset P$ to deduce that
\begin{displaymath} |\bar{P}_{1}|_{\delta} \lesssim \delta^{-2cu}|\bar{P}_{1} \cap R|_{\delta} \lesssim \delta^{-Ccu}\left(\frac{\delta^{t/(s + t)}}{\delta}\right)^{s} = \delta^{-u/4 -s^{2}/(s + t)}. \end{displaymath}
Above, $C \geq 1$ was an absolute constant arising from the estimate \eqref{form39} for the diameter or $R$, and then $c > 0$ was chosen so small that $cC \leq \tfrac{1}{4}$. In particular, recalling \eqref{form42}, and using Lemma \ref{lemma4},
\begin{equation}\label{form45} |\mathcal{I}(\bar{P}_{1},\mathcal{L})|_{\delta} \lesssim \delta^{-u/2 - (s^{2} + t^{2})/(s + t)}. \end{equation}
For $\delta > 0$ small enough, and since $u \leq st/(s + t)$, this upper bound is far smaller than $|\mathcal{I}(\bar{P},\mathcal{L})| \gtrapprox \delta^{\epsilon - f(s,t)}$. In particular, we still have $|\mathcal{I}(\bar{P} \, \setminus \, \bar{P}_{1},\mathcal{L})| \geq \delta^{2\epsilon - f(s,t)}$. Therefore, the previous reasoning can be repeated to the $(\delta,s,\delta^{-\epsilon})$-set $\bar{P} \, \setminus \, \bar{P}_{1}$ and $(\delta,t,\delta^{-\epsilon})$-set $\mathcal{T}$ to produce a second $\delta^{cu}$-clique $\bar{P}_{2} \times \mathcal{L}_{2} \subset (\bar{P} \, \setminus \, \bar{P}_{1}) \times \mathcal{L}$. Again, by Remark \ref{rem5}, the set $\bar{P}_{2}$ can be selected to be of the form $\bar{P}_{2} = (\bar{P} \, \setminus \, \bar{P}_{1}) \cap (\cup \overline{\mathcal{P}}_{2})$, where $\overline{\mathcal{P}}_{2} \subset \mathcal{D}_{\delta}(\bar{P} \, \setminus \, \bar{P}_{1})$. In particular, using the analogous structure of $\bar{P}_{1}$ discussed above, $\mathcal{D}_{\delta}(\bar{P}_{1}) \cap \mathcal{D}_{\delta}(\bar{P}_{2}) = \emptyset$.

How many times can this argument be repeated? For every $\delta^{cu}$-clique $\bar{P}_{j} \times \mathcal{L}_{j} \subset \bar{P} \times \mathcal{T}$, the estimate \eqref{form45} holds with "$1$" replaced by "$j$" (by the same argument). Therefore,
\begin{displaymath} |\mathcal{I}(\bar{P}_{1} \cup \ldots \cup \bar{P}_{n},\mathcal{L})|_{\delta} \lesssim n \cdot \delta^{-u/2 - (s^{2} + t^{2})/(s + t)}. \end{displaymath}
This upper bound remains smaller than $\tfrac{1}{2}|\mathcal{I}(\bar{P},\mathcal{L})| \gtrapprox \delta^{\epsilon - f(s,t)}$ as long as
\begin{displaymath} n \lesssim \delta^{u/2 + 2\epsilon - st/(s + t)}. \end{displaymath}
So, the argument can safely be repeated at least $n := \delta^{3u/4 - st/(s + t)}$ times. At this stage, by the $\delta^{cu}$-clique property of $\bar{P}_{j} \times \mathcal{L}_{j}$, and the covering number bounds \eqref{form47},
\begin{displaymath} \sum_{1 \leq j \leq n} |\mathcal{I}(\bar{P}_{j},\mathcal{L})|_{\delta} \geq n \cdot \delta^{cu} \cdot |\bar{P}_{j}|_{\delta}|\mathcal{L}_{j}|_{\delta} = \delta^{3u/4 + 3cu - f(s,t)}. \end{displaymath}
Since $c \leq \tfrac{1}{20}$, this completes the proof of the corollary. \end{proof}

\bibliographystyle{plain}
\bibliography{references}
	
\end{document}